\DeclareMathAlphabet{\mathpzc}{OT1}{pzc}{m}{it}
\def\setliststart#1{\setcounter{\@listctr}{#1}%
  \addtocounter{\@listctr}{-1}}
\newtheorem{remark}{\textbf{Remark}}[section]
\newtheorem{example}{\textbf{Example}}[section]
\newtheorem{lemma}{\textbf{Lemma}}[section]
\newtheorem{theorem}{\textbf{Theorem}}[section]
\newtheorem{corollary}{\textbf{Corollary}}[section]
\newtheorem{proposition}{\textbf{Proposition}}[section]
\newtheorem{definition}{\textbf{Definition}}[section]
\numberwithin{equation}{section}
\newcommand{\T}{\mathbb{T}}
\newcommand{\R}{\mathbb{R}}
\newcommand{\N}{\mathbb{N}}
\newcommand{\PP}{\mathcal{P}}
\newcommand{\A}{\mathcal{A}}
\newcommand{\C}{\mathcal{C}}
\newcommand{\M}{\mathcal{M}}
\newcommand{\D}{\mathcal{D}}
\newcommand{\I}{\mathcal{I}}
\newcommand{\W}{\mathcal{W}}
\DeclareMathOperator*{\supp}{spt}
\DeclareMathOperator*{\eps}{\varepsilon}
\DeclareMathOperator*{\PB}{A_{\infty}}
\DeclareMathOperator*{\mane}{\alpha[{\it H}]}
\DeclareMathOperator*{\SR}{SR}
\DeclareMathOperator*{\SC}{SC}
\def\moverlay{\mathpalette\mov@rlay}
\def\mov@rlay#1#2{\leavevmode\vtop{%
   \baselineskip\z@skip \lineskiplimit-\maxdimen
   \ialign{\hfil$\m@th#1##$\hfil\cr#2\crcr}}}
\newcommand{\charfusion}[3][\mathord]{
    #1{\ifx#1\mathop\vphantom{#2}\fi
        \mathpalette\mov@rlay{#2\cr#3}
      }
    \ifx#1\mathop\expandafter\displaylimits\fi}
\title[Aubry-Mather theory]{Aurby-Mather theory for optimal control systems with nonholonomic constraints}
\author{Piermarco Cannarsa \and Cristian Mendico}
\address{Dipartimento di matematica, Universit\'a degli studi di Roma Tor Vergata -- Via della Ricerca Scientifica 1, 00133 Roma}
\email{cannarsa@mat.uniroma2.it}
\address{Institut de Math\'ematique de Bourgogne - UMR 5584 CNRS, Universit\'e Bourgogne Franche-Comt\'e -- 9 Avenue Alain Savery, 21078 Dijon Cedex, France}
\email{cristian.mendico@u-bourgogne.fr}
\date{\today}
\subjclass[2020]{35F21, 37J37, 37J39, 37J60, 49L25}
\keywords{Optimal Control, Nonholonomic constraints, Hamilton-Jacobi equations, Aubry-Mather theory.}
\begin{document}
%\usetagform{blue}

\begin{abstract}
In this work, we extend Aubry-Mather theory to the case of control systems with nonholonomic constraints. In this framework, we consider an optimal control problem where admissible trajectories are solutions of a control-affine equation. Such an equation is associated with a family of smooth vector fields that satisfy the H\"ormander condition, which implies the controllability of the system. In this case, the Hamiltonian fails to be coercive, so results for Tonelli Hamiltonians cannot be applied. To overcome these obstacles, we develop an intrinsic approach based on the metric properties of the geometry induced on the state space by the sub-Riemannian structure. 
\end{abstract}
\maketitle

\section{Introduction}

The purpose of this paper is to extend Aubry-Mather theory to minimization problems associated with control-affine structure. Our approach is based on  metric properties of the sub-Riemannian geometry induced on the state space by the  dynamical system. In particular, the existence of geodesics w.r.t. the sub-Riemannian metric and controllability properties of the system play a crucial role. We refer the reader to  \cite{bib:ABB, bib:COR, bib:LR} and the references therein for more details on sub-Riemannian geometry and its relation with geometric control.

In \cite{Cannarsa_2022} the authors prove the existence of the critical constant and of a viscosity solution to the ergodic Hamilton-Jacobi equation in a general sub-Riemannian structure. However, in order to improve the natural regularity of critical solutions---which, in general, are merely H\"older continuous w.r.t. the Euclidean distance and Lipschitz continuous w.r.t. the sub-Riemannian one---in this work we restrict the analysis to the class of sub-Riemannian systems that admit no singular minimizing controls different from zero.  We recall that a control function is singular for the problem if there exists a dual arc which is orthogonal to vector fields $\{f_{i}\}_{i=1,\dots, m}$ along the associated trajectory. In absence of such controls, owing to \cite{bib:CR}, we have that critical solutions are semiconcave, hence Lipschitz continuous, w.r.t. the Euclidean distance on $\T^d$. While the hypothesis may seem restrictive, it encompasses many interesting examples, such as the Heisenberg group and the Grushin plane. In \Cref{Grushin1} we investigate in detail the case of the Grushin plane with a Lagrangian function of Ma\~n\'e type.

We observe that a crucial role in the Aubry-Mather theory is played by probability measures on the tangent bundle which are invariant for the Euler-Lagrange flow. Such a strategy fails for nonholonomic control systems because the Euler-Lagrange flow requires distributions to generate the whole tangent space, which is no longer the case. For this reason, we replace such measures by a suitable notion of closed measures (used in \cite{bib:FAS} in order to adapt weak KAM theory to problems in the calculus of variations satisfying mild regularity assumptions), tailored to fit the sub-Riemannian structure. Note that a similar strategy has been used also in the case of control of acceleration (a particular case of single input sub-Riemannian systems with non-zero drift) in \cite{bib:PC1} to define closed measures adapted to the control structure. Furthermore, in this work we generalize the approach in \cite{bib:FAS} by using weaker test functions for closed measures, namely semiconcave functions, which allow us to avoid the problem of the existence of a $C^1$ critical strict subsolution to the ergodic Hamilton-Jacobi equation. The generalization proposed in this work is necessary to show the inclusion of the projected Mather set into the projected Aubry set (see \Cref{inclusion}).  Indeed, the $C^1$ regularity is not natural in our settings due to the weaker controllability of the system. For the existence of smooth subsolutions to the critical Hamilton-Jacobi equation and the use of closed measures, we also refer to \cite{BP2, BP1, BP3}. In \cite{BP3}, P. Bernard developed a Hamiltonian approach to weak KAM theory, that is, an approach where the Euler-Lagrange flow on the tangent bundle is replaced by the Hamiltonian flow on the cotangent bundle. Even in \cite{BP3}, however, the Tonelli property of the Hamiltonian plays a crucial role, which prevents application to control problems with nonholonomic constraints.

Therefore, when the Hamiltonian fails to be Tonelli---as is often the case in optimal control---finding a continuous viscosity solution to the critical equation is a widely open question. Specific models were addressed in \cite{bib:CNS},  \cite{bib:XY},  \cite{bib:PCE},  \cite{bib:BA}, and \cite{bib:ALE}, where the ergodic problem associated with the so-called $G$-equation or other non-coercive Hamiltonians on compact manifolds is treated. See also \cite{bib:ABG, bib:ABE}  \cite{bib:KDV}, and \cite{bib:MG} for more on second order differential games. In the above papers, the authors proved the existence of the critical constant and, in few specific cases such as the $G$-equation, they also showed the existence of a viscosity solution to the critical equation.

 In the case of mechanical Lagrangians and state equations of sub-Riemannian type, the critical constant and a solution of the ergodic equation at the energy level of such a constant were constructed in \cite{Gomes_2007} by using PDE methods. In this work, following a completely different approach that relies on variational methods, we go beyond the existence of the critical constant and critical solutions introducing and studying the Aubry set and the Mather set. In particular, we first show that critical solutions are horizontally differentiable on the projected Aubry set. Then, we investigate dynamical and topological properties of the Mather set. For instance, our novel approach turns out to be crucial to obtain the inclusion of the projected Mather set into the projected Aubry set and to prove the Mather graph property in the case of nonholonomic constraints. We would also like to stress that our results hold for a more general class of Lagrangian functions and control affine structures. Indeed, we allow the Lie algebra generated by the family of vector fields not to have a uniform degree of nonholonomy. Finally, concerning the characterisation of the critical constant in terms of probability measures, in \cite{Gomes_2007}, closed probability measures were defined on the distribution generated by the vector fields associated with the nonholonomic constraints. In this work, we need to change such a definition by introducing probability measures on the control space, which appears to be a natural choice in the sub-Riemannian case. This is why we manage to obtain finer regularity properties for the Aubry and Mather sets (see Section 1.2.1 for more details).

 In \Cref{assumptions} we introduce the notation, the data of the model we are interested in and the main assumptions in force throughout the paper. In \Cref{Aubry}, we introduce Peierls' barrier, whose zero points define the Aubry set. The main result of this chapter concerns the horizontal differentiability of the fixed-point of $T_t$ at any point in the projected Aubry set. We then provide a characterization of the critical constant by minimizing closed measures adapted to the sub-Riemannian structure described in \Cref{Minimizing}. Consequently, we define and study the Mather set in \Cref{Mather_Set}. In particular, we show that the projected Mather set is a subset of the projected Aubry set, and we conclude by proving a weaker form of the Mather graph property.

\section{Assumptions}\label{assumptions}

 Fixed $m \in \N$ such that $m \leq d$, let 
	\begin{equation*}
		f_{i}: \T^{d} \to \R^{d} \quad (i=1, \dots, m)
	\end{equation*}
	and
	\begin{equation*}
		u_{i}:[0,\infty) \to \R \quad (i=1, \dots, m)
	\end{equation*}
be smooth vector fields and square-integrable controls, respectively. Consider the following control affine dynamics
	\begin{align}\label{eq:dynamics}
         \dot\gamma(t)= \displaystyle{\sum_{i=1}^{m}{u_{i}(t) f_{i}(\gamma(t))}}= F(\gamma(t))U(t), \quad t \in [0,+\infty)
	\end{align}
	where $F(x)=[f_{1}(x)| \dots | f_{m}(x)]$ is a $d \times m$ real matrix, $U(t)=(u_{1}(t), \dots, u_{m}(t))^{\star}$ with $(u_{1}, \dots, u_{m})^{\star}$ denoting the transpose of $(u_{1}, \dots, u_{m})$.  
	We assume the vector fields $f_{i}$ to satisfy the following.
\begin{itemize}
	\item[{\bf (F)}] $F = [f_1\; | \ldots |\; f_m] \in C^{1,1}(\T^{d})$ and for any $x \in \T^{d}$ the matrix $F(x)$ has full rank (equal to $m$).
\end{itemize}

 For any $s_{0}$, $s_{1} \in \R$ with $s_{0} < s_{1}$ and $x$, $y \in \T^{d}$ we define
	\begin{align*}\label{eq:controlnotation}
	\begin{split}
	\Gamma_{s_{0}, s_{1}}^{x \to} & =\{(\gamma, u) \in \text{AC}([s_{0}, s_{1}]; \T^{d}) \times L^{2}(s_{0}, s_{1}; \R^{m}): \dot\gamma(t)=F(\gamma(t))u(t),\,\, \gamma(s_{0})=x\},
	\\
	 \Gamma_{s_{0}, s_{1}}^{\to y} & =\{(\gamma, u) \in \text{AC}([s_{0}, s_{1}]; \T^{d}) \times L^{2}(s_{0}, s_{1}; \R^{m}): \dot\gamma(t)=F(\gamma(t))u(t),\,\, \gamma(s_{1})=y\},
	 \\
	 \Gamma_{s_{0}, s_{1}}^{x \to y} & = \Gamma_{s_{0}, s_{1}}^{x \to} \cap \Gamma_{s_{0}, s_{1}}^{\to y}.
	 \end{split}
	\end{align*}
We now state the assumptions on the Lagrangian $L: \T^{d} \times \R^{m} \to \R$.
\begin{itemize}
\item[{\bf (L)}] $L \in C^{2}(\T^{d} \times \R^{m})$ such that $D^{2}_{u} L(x, u) \geq 0$  and there exist $\sigma > 1$, $K_1, K_2 > 0$ such that 
\begin{equation}\label{eq:L0}
L(x, u) \geq K_1|u|^{\sigma} - K_2 \quad \forall\ (x, u) \in \T^d \times \R^m.
\end{equation}
\end{itemize}

In the following we consider the asymptotic behavior of the minimization problem: 
\begin{equation*}
\text{minimize}\quad \int_{0}^{T} L(\gamma(s), u(s))\ ds \quad \text{over all}\quad (\gamma, u) \in \Gamma_{0,T}^{x \to}
\end{equation*}
for any $x \in \T^{d}$ and any time horizon $T > 0$, and we define the value function
\begin{equation}\label{eq:evoValue}
V^{T}(x) = \inf_{(\gamma , u) \in \Gamma_{0,T}^{x \to}} \int_{0}^{T} L(\gamma(s), u(s))\ ds \quad (x \in \T^d).
\end{equation}
Let $H: \T^{d} \times \R^{d} \to \R$ be the Hamiltonian associated with $L$, that is, 
\begin{equation}\label{eq:Hamiltonian}
H(x,p)=\sup_{u \in \R^{m}} \left\{ \displaystyle{\sum_{i=1}^{m}} u_{i} \langle p, f_{i}(x)\rangle  - L(x,u) \right\}, \quad \forall\ (x,p) \in \T^{d} \times \R^{d}.
	\end{equation}
	We need to assume the following. 
\begin{itemize}
	\item[{\bf (S)}] There are no singular minimizing controls of problem \eqref{eq:evoValue} different from zero.
	\end{itemize}
	
	Such extra assumption {\bf (S)} is needed to improve the regularity of viscosity solutions. Indeed, it was proved in \cite[Theorem 1]{bib:CR} that, under {\bf (S)}, the fundamental solution $A_{t}(x,y)$ to the critical equation
	\begin{equation}\label{eq:HJ}
	\mane + H(x, D\chi(x)) = 0, \quad x \in \T^d
	\end{equation}
	is semiconcave (in the Euclidean sense) as a function of $y \mapsto A_{t}(x,y)$ and, consequently, Lipschitz continuous w.r.t. Euclidean distance.
	
	\begin{remark}\em\label{rem:loclipschitz}
In view of the above assumptions on $L$, there exists a constant $C_{H} \geq 0$ such that
	\begin{equation}\label{eq:Hcondition}
	|H(x,p) - H(y,p)| \leq C_{H}(1+|p|^{2})|x-y|, \quad \forall\ x, y \in \T^d.	
	\end{equation}
%We give the proof of \eqref{eq:Hcondition} for the reader's convenience. First, we claim that, if $u_{x}$ is a maximizer of \eqref{eq:Hamiltonian} associated with $(x,p) \in \T^{d} \times \R^{d}$ then 
%	\begin{equation}\label{eq:claim}
%	|u_{x}| \leq C(|p|).	
%	\end{equation}
%	Indeed, on the one hand, we know that
%$
%H(x,p) = \langle u_{x}, F^{\star}(x)p \rangle  - L(x,u_{x}).	
%$
%On the other hand, by definition we have that 
%$
%H(x,p) \geq - L(x,0).	
%$
%Therefore, we deduce that
%\begin{equation}\label{eq:cri}
%L(x,u_{x}) \leq \langle u_{x}, F^{\star}(x)p \rangle + L(x,0) \leq \frac{1}{4\ell_{1}}|u_{x}|^{2} + \ell_{1}^{2}|F^{\star}(x)p|^{2} + C_{1}. 
%\end{equation}
%	Since by \eqref{eq:L0} we have that 
%	\begin{equation}\label{eq:cri1}
%	L(x,u_{x}) \geq \frac{1}{2\ell_{1}}(1+|u_{x}|^{2}),
%	\end{equation}
%combining \eqref{eq:cri} and \eqref{eq:cri1} we get \eqref{eq:claim}.
%
%
%We can now prove \eqref{eq:Hcondition}. For $x \in \T^{d}$ let $u_{x}$ be a maximizer of \eqref{eq:Hamiltonian}. Then, we have that 
%\begin{align*}
%H(y,p) - H(x,p) \leq\ & \langle \big(F(y)-F(x)\big)u_{x},p \rangle + L(x,u_{x}) - L(y,u_{x}) 
%\\
%\leq & C_{1}|y-x||u_{x}||p| + C_{2}|y-x|(|u_{x}|^{2} + 1)	
%\end{align*}
%which by \eqref{eq:claim} and {\bf (L)} yields \eqref{eq:Hcondition}. 
\end{remark}

We denote by $L^{*}$ the Legendre transform of $L$, that is,
	\begin{equation*}
	L^{*}(x,p)= \sup_{u \in \R^{m}} \big\{\langle p,v \rangle - L(x,u) \big\},
	\end{equation*}
and we observe that
\begin{equation}\label{eq:starhj}
 H(x, p) = L^{*}(x, F^{*}(x)p), \quad (x, p) \in \T^{d} \times \R^{d}.
\end{equation}

\section{Aubry set}\label{Aubry}

		\begin{definition}[{\bf Dominated functions and calibrated curves}]\label{def:domcal}
			Let $c \in \R$ and let $\varphi$ be a continuous function on $\T^{d}$.			
			\begin{enumerate}
			\item We say that $\varphi$ is dominated by $L-c$, and we denote this by $\varphi \prec L -c$, if for any $a$, $b \in \R$, with $a < b$, and any trajectory-control pair $(\gamma,u) \in \Gamma_{a,b}^{\gamma(a) \to \gamma(b)}$ we have that
			\begin{equation*}
			\varphi(\gamma(b)) - \varphi(\gamma(a)) \leq \int_{a}^{b}{L(\gamma(s), u(s))\ ds} - c\ (b-a).
			\end{equation*}
			\item Fix $a$, $b \in \R$ with $a < b$ and let $(\gamma, u) \in \Gamma_{a,b}^{\gamma(a) \to \gamma(b)}$. We say that $\gamma$ is calibrated for $\varphi$ if
			\begin{equation*}
			\varphi(\gamma(b)) - \varphi(\gamma(a)) = \int_{a}^{b}{L(\gamma(s), u(s))\ ds} - c\ (b-a).
			\end{equation*}
			\end{enumerate}
		\end{definition}
		
	%	\noindent We recall that $\D_H$ denotes the set of all continuous functions dominated by $L - \mane$. 

		For any $t \geq 0$ and for any $x$, $y \in \T^{d}$ we recall that $A_{t}(x,y)$ stands for the action functional, also called fundamental solution of the critical equation, i.e.,
		\begin{equation}\label{eq:actionfunctional}
		A_{t}(x,y)=\inf_{(\gamma, u) \in \Gamma_{0,t}^{x \to y}} \int_{0}^{t}{L(\gamma(s), u(s))\ ds} .	
		\end{equation}
 We note that $\varphi \prec L-\mane$ if and only if for any $x$, $y$ in $\T^{d}$ and for any $t \geq 0$ we have that 
\begin{equation}\label{eq:subsol}
	\varphi(y)-\varphi(x) \leq A_{t}(x,y) - \mane t.
\end{equation}
Moreover, the following holds. 

\begin{lemma}\label{lem:equilip}
		Assume {\bf (F)}, {\bf (L)} and let $\varphi \prec L-\mane$. Then, $\varphi$ is Lipschitz continuous on $\T^{d}$ with a Lipschitz constant depending only on $L$, i.e., there exists $C_L \geq 0$ such that
		\begin{equation*}
		|\varphi(x) - \varphi(y)| \leq C_L d_{\SR}(x, y) \quad (x, y \in \T^d).
		\end{equation*} 
		\end{lemma}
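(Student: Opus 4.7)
The plan is to exploit the definition of domination directly by testing it against a sub-Riemannian minimizing (or near-minimizing) path between $x$ and $y$, parametrized with unit control norm. The sub-Riemannian distance $d_{\SR}(x,y)$ makes sense because the vector fields $\{f_i\}$ generate the tangent bundle through iterated Lie brackets (H\"ormander), and any two points of $\T^d$ can be joined by an admissible curve (Chow--Rashevskii); moreover such curves can be reparametrized so that $|u(s)|=1$ a.e. on an interval whose length is arbitrarily close to $d_{\SR}(x,y)$.

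First I would fix $x,y\in\T^d$, set $d=d_{\SR}(x,y)$, and for each $\eps>0$ pick $T\in(0,d+\eps]$ together with a pair $(\gamma,u)\in \Gamma_{0,T}^{x\to y}$ satisfying $|u(s)|\le 1$ a.e. Applying Definition~\ref{def:domcal}(1) to this pair (equivalently, \eqref{eq:subsol}) gives
\begin{equation*}
\varphi(y)-\varphi(x)\;\le\;\int_0^T L(\gamma(s),u(s))\,ds-\mane\,T.
\end{equation*}

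Next I would use the regularity of $L$: since $L\in C^2(\T^d\times\R^m)$ and $\T^d\times\overline{B}_1(0)$ is compact, the constant
\begin{equation*}
M:=\sup\bigl\{L(x,u):x\in\T^d,\;|u|\le 1\bigr\}
\end{equation*}
is finite. Then $\int_0^T L(\gamma(s),u(s))\,ds\le MT$, and the previous inequality yields
\begin{equation*}
\varphi(y)-\varphi(x)\;\le\;(M-\mane)(d+\eps).
\end{equation*}
Letting $\eps\to 0^+$ produces $\varphi(y)-\varphi(x)\le C_L\, d_{\SR}(x,y)$ with $C_L:=|M|+|\mane|$. Because the dynamics $\dot\gamma=F(\gamma)u$ is symmetric under $u\mapsto -u$ and time reversal, $d_{\SR}$ is symmetric; interchanging the roles of $x$ and $y$ provides the matching lower bound, and the Lipschitz estimate follows.

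The routine part is the estimate itself; the substantive step is the existence of arc-length parametrized near-minimizing horizontal curves, which relies on the controllability guaranteed by the H\"ormander condition standing in the sub-Riemannian framework of the paper. I do not foresee a serious obstacle: once Chow--Rashevskii is invoked, the argument is a one-line application of the domination inequality combined with the boundedness of $L$ on $\T^d\times\overline{B}_1(0)$. Note that the constant $C_L$ depends on $L$ and on $\mane$, and the latter is itself determined by $L$ and the ambient vector fields $f_1,\dots,f_m$, which are considered fixed data of the problem.
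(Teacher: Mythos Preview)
Your argument is correct and is precisely the standard proof. The paper states \Cref{lem:equilip} without proof, but the same geodesic-with-unit-control estimate you employ (bounding $\int L(\gamma,u)\,ds$ by $\|L\|_{\infty,\T^d\times\overline{B}_1}$ times the sub-Riemannian length) appears repeatedly in later arguments---see the proofs of \Cref{prop:viscositybarrier}, \Cref{prop:aubrycal}, and \Cref{prop:closedaubry}---so your approach coincides with the paper's implicit method.
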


Peierls' barrier is defined as
\begin{align}\label{eq:PB}
\PB(x,y) =  \liminf_{t \to \infty}\ \big[A_{t}(x,y) - \mane t\big] \quad (x,y \in \T^{d}).	
\end{align}

\begin{lemma}\label{lemma}
For any $x$, $y$, $z \in \T^{d}$ there holds
\begin{equation}\label{eq:peierls}
\PB(x,z) \leq \PB(x,y) + \PB(y,z)
\end{equation}
and, 
\begin{equation}\label{eq:trian}
\PB(x,z) \leq \PB(x,y) + A_{t}(y,z) - \mane t \mbox{ for any } t \geq 0.
\end{equation}
\end{lemma}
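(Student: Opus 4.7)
The plan is to derive both inequalities from the subadditivity (dynamic programming principle) of the action functional together with elementary properties of $\liminf$, observing that \eqref{eq:trian} implies \eqref{eq:peierls}.

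First I would establish the subadditivity
\begin{equation*}
A_{t+s}(x,z) \leq A_{t}(x,y) + A_{s}(y,z) \qquad (t,s \geq 0,\ x,y,z \in \T^d).
\end{equation*}
This is routine: given $(\gamma_1, u_1) \in \Gamma_{0,t}^{x \to y}$ and $(\gamma_2, u_2) \in \Gamma_{0,s}^{y \to z}$, concatenate them to form an admissible pair $(\gamma, u) \in \Gamma_{0,t+s}^{x \to z}$ by setting $\gamma = \gamma_1$ and $u = u_1$ on $[0,t]$ and $\gamma(\tau) = \gamma_2(\tau - t)$, $u(\tau) = u_2(\tau - t)$ on $[t, t+s]$. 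This is admissible because the control system \eqref{eq:dynamics} is time-invariant and the control space $L^2$ is closed under such concatenation. Taking the infimum over $(\gamma_1, u_1)$ and $(\gamma_2, u_2)$ yields the claim.

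Next, rewriting the subadditivity as
\begin{equation*}
\bigl[A_{t+s}(x,z) - \mane (t+s)\bigr] \leq \bigl[A_{s}(x,y) - \mane s\bigr] + \bigl[A_{t}(y,z) - \mane t\bigr],
\end{equation*}
I fix $t \geq 0$ and take $\liminf_{s\to\infty}$ of both sides. Since $t$ is fixed, the substitution $\tau = s+t$ gives
\begin{equation*}
\liminf_{s \to \infty}\bigl[A_{t+s}(x,z) - \mane(t+s)\bigr] = \liminf_{\tau \to \infty}\bigl[A_{\tau}(x,z) - \mane \tau\bigr] = \PB(x,z),
\end{equation*}
while the first summand on the right has $\liminf_{s\to\infty}$ equal to $\PB(x,y)$ and the second is constant in $s$. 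This proves \eqref{eq:trian}.

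Finally, \eqref{eq:peierls} follows from \eqref{eq:trian}: since \eqref{eq:trian} holds for every $t \geq 0$, taking $\liminf_{t \to \infty}$ of the right-hand side gives
\begin{equation*}
\PB(x,z) \leq \PB(x,y) + \liminf_{t \to \infty}\bigl[A_{t}(y,z) - \mane t\bigr] = \PB(x,y) + \PB(y,z).
\end{equation*}
There is no real obstacle; the only point worth checking carefully is that admissibility is preserved under concatenation of trajectory-control pairs, which holds because \eqref{eq:dynamics} is autonomous and $L^2$ controls can be glued across the join.
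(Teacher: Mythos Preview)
Your argument is correct and is the standard one. The paper actually states this lemma without proof, presumably regarding it as routine; your concatenation-plus-$\liminf$ reasoning is exactly what one would supply, and the only delicate point---that the shift $\tau = s+t$ leaves the $\liminf$ unchanged and that $\liminf$ respects the inequality when one summand is constant in $s$---is handled correctly.
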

%\proof
%The proof of  \eqref{eq:peierls} is a consequence of the following triangular inequality satisfied by the action function, that is, 
%\begin{equation*}
%A_{t'+t} (x, z) \leq A_t (x, y) + A_{t'} (y, z)
%\end{equation*}
%for any $x, y, z \in \T^d$. Hence, taking the limit inferior as $t, t' \to \infty$ we get the result. 
%
%Let us show \eqref{eq:trian}. Let $x$, $y \in \T^{d}$, let $\eps > 0$ and let $\{s_{n}\}_{n \in \N}$ be such that 
%\begin{equation*}
%A_{s_{n}}(x,y) -\mane s_{n} \leq \PB(x,y) + \eps, \quad \forall\ n \in \N.
%\end{equation*}
%Let $t > 0$, $z \in \T^{d}$ and set $t_{n}=s_{n} + t$. Then, we have that 
%\begin{align*}
%A_{t_{n}}(x,z) -\mane t_{n} \leq\ & A_{s_{n}}(x,y) - \mane s_{n} + A_{t}(y,z) - \mane t
%\\
%\leq\ & \PB(x,y) + \eps + A_{t}(y,z) - \mane t.
%\end{align*}
%Hence, we obtain
%\begin{equation*}
%\PB(x,z) \leq \liminf_{n \to \infty} \{A_{t_{n}}(x,z) - \mane t_{n} \} \leq \PB(x,y) + \eps + A_{t}(y,z) - \mane t
%\end{equation*}
%which in turn yields \eqref{eq:trian} since $\eps$ is arbitrary. 
%\qed
%

\begin{remark}\label{rem:peierlsubsolution}\em
Note that, combining \eqref{eq:trian} and \eqref{eq:subsol} we have that Peierls' barrier is dominated by $L-\mane$. Moreover, it is well-known that this implies that $\PB(x,\cdot)$ is a subsolution to the critical equation 
\begin{equation*}
\mane + H(x, D \chi(x))=0 \quad (x \in \T^{d}). 
\end{equation*}
\end{remark}

\subsection{Properties of Peierls' barrier}

In this section we derive several properties of  Peierls' barrier. For example, we prove that $\PB(x,\cdot)$ is a fixed-point of the Lax-Oleink and we give a construction of calibrated curves for such a function.

	First, we observe that a fixed-point of the Lax-Oleinik semigroup can be equivalently written as
	\begin{equation}\label{eq:corrector2}
	\chi(x)=\min_{y \in \T^{d}} \{\chi(y) + A_{t}(y,x)\} - \mane t, \quad \forall t > 0, \, \forall x \in \T^{d}.
	\end{equation}
	and it is a solution of 
	\begin{equation}\label{eq:HJcorrector}
	\mane + H(x, D\chi(x))=0, \quad x \in \T^{d}.
	\end{equation}

\begin{lemma}\label{lem:semiconcavity}
Let $\chi$ be a fixed-point of $T_t$. Then, $\chi$ is locally semiconcave. 
\end{lemma}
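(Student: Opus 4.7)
The plan is to combine the fixed-point representation \eqref{eq:corrector2} for $\chi$ with the Euclidean semiconcavity of $z \mapsto A_t(y, z)$ granted by assumption \textbf{(S)} via \cite[Theorem 1]{bib:CR}. In essence, each minimizer $y_0$ attained in the fixed-point formula at a point $x$ provides the semiconcave function $z \mapsto \chi(y_0) + A_t(y_0, z) - \mane t$ that touches $\chi$ from above at $x$, and uniform control of the semiconcavity constant as $x$ varies will deliver local semiconcavity of $\chi$.

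To make this concrete, I would fix $x_0 \in \T^d$, a time $t > 0$ and a compact neighborhood $K$ of $x_0$. For each $x \in K$, compactness of $\T^d$ together with the continuity of $\chi$ (Lipschitz by \Cref{lem:equilip}) and lower semicontinuity of $y \mapsto A_t(y,x)$ provides a minimizer $y_0 = y(x) \in \T^d$ in \eqref{eq:corrector2}. Combining the resulting identity and inequality
\[
\chi(x) = \chi(y_0) + A_t(y_0, x) - \mane t, \qquad \chi(z) \leq \chi(y_0) + A_t(y_0, z) - \mane t \quad (z \in \T^d),
\]
and evaluating the second relation at $z = x \pm h$ with $x \pm h \in K$ yields
\[
\chi(x+h) + \chi(x-h) - 2\chi(x) \leq A_t(y_0, x+h) + A_t(y_0, x-h) - 2 A_t(y_0, x).
\]
Invoking the Euclidean semiconcavity of $A_t(y_0, \cdot)$ with a constant $C_t(K)$ independent of $x$ bounds the right-hand side by $C_t(K)|h|^2$, and this uniform second-difference estimate in $x \in K$ is a standard characterization of local semiconcavity for continuous functions.

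The main obstacle I foresee is precisely this uniformity: one needs the semiconcavity modulus of $A_t(y,\cdot)$ to be bounded as $y$ ranges over the compact set $\{y(x) : x \in K\} \subset \T^d$. The estimate in \cite[Theorem 1]{bib:CR} is stated pointwise in the initial datum, so one must either revisit that proof to track explicitly how the modulus depends on the starting point $y$, or combine an upper semicontinuity property of this modulus with compactness of $\T^d$. On the torus and for fixed $t > 0$ this kind of uniform bound is natural, and once secured the semiconcavity of $\chi$ on $K$ is immediate; letting $x_0$ range over $\T^d$ gives local semiconcavity.
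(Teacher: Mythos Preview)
Your proposal is correct and follows essentially the same approach as the paper: fix a minimizer $y_{t,x}$ in \eqref{eq:corrector2}, derive the second-difference inequality
\[
\chi(x+h)+\chi(x-h)-2\chi(x)\le A_t(y_{t,x},x+h)+A_t(y_{t,x},x-h)-2A_t(y_{t,x},x),
\]
and invoke the semiconcavity of $A_t(y_{t,x},\cdot)$ from \cite[Theorem~1]{bib:CR}. The paper's proof is terser and does not explicitly discuss the uniformity of the semiconcavity modulus in $y$ that you flag; your caution there is warranted but, since $\T^d$ is compact and the estimates in \cite{bib:CR} are uniform on compacta, this point is indeed routine.
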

\proof

Let $y_{t, x} \in \T^{d}$ be the minimum point associated with $(t, x)$ in \eqref{eq:corrector2}. Then, for any $t > 0$, $x$, $h \in \T^{d}$ we have that 
\begin{multline*}
\chi(x+h) + \chi(x-h)- 2\chi(x) 
\\
\leq \chi(y_{t, x}) + \chi(y_{t, x}) - 2\chi(y_{t, x}) + A_{t}(y_{t, x}, x+h) + A_{t}(y_{t, x}, x-h) - 2A_{t}(y_{t, x}, x)
\\
= A_{t}(y_{t, x}, x+h) + A_{t}(y_{t, x}, x-h) - 2A_{t}(y_{t, x}, x).
\end{multline*}
The conclusion follows from the semiconcavity of  $A_{t}(y_{t, x}, \cdot)$, proved in \cite[Theorem 1]{bib:CR}. \qed

The following technical lemma is useful for the analysis of the Aubry set.

\begin{lemma}\label{lem:barrierconvergence}
Let $(x,y) \in \T^{2d}$. Let $\{ t_{n}\}_{n \in \N} \in \R$ and $(\gamma_{n}, u_{n}) \in \Gamma_{0,t_{n}}^{x \to y}$ be such that
\begin{equation}\label{eq:zeros}
t_{n} \to +\infty \quad \text{and} \quad \lim_{n \to +\infty} \int_{0}^{t_{n}}{L(\gamma_{u_{n}}(s), u_{n}(s))\ ds}-\mane t_{n} = \PB(x,y). 	
\end{equation}
Then, there exist a subsequence, still denoted by $(\gamma_{n}, u_{n})$, and a pair $(\bar\gamma, \bar{u}) \in \Gamma_{0,\infty}^{x \to}$ such that
\begin{itemize}
\item[($i$)]  $\{u_{n}\}_{n \in \N}$ weakly converges to $\bar{u}$ in $L^{\sigma}$ on any compact subset of $[0,\infty)$;
\item[($ii$)] $\{\gamma_{n}\}_{n \in \N}$ uniformly converges to $\bar\gamma$ on every compact subset of $[0,\infty)$.
\end{itemize} 
\end{lemma}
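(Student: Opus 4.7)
The plan is to run a classical weak-compactness argument: first secure a uniform $L^\sigma$ bound on the controls over every fixed window $[0, T]$, and then combine the Banach--Alaoglu theorem with Arzel\`a--Ascoli and a diagonal extraction to produce the limit, finally passing to the limit in the integral form of the dynamics.

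The main step is the uniform bound on $\int_0^T L(\gamma_n(s), u_n(s))\, ds$ for each fixed $T > 0$. Split the cost into $[0, T]$ and $[T, t_n]$ and bound the tail from below by dynamic programming:
\begin{equation*}
\int_T^{t_n} L(\gamma_n(s), u_n(s))\, ds \geq A_{t_n - T}(\gamma_n(T), y).
\end{equation*}
Pick any bounded continuous critical subsolution $\chi$ (for instance the critical viscosity solution constructed in \cite{Cannarsa_2022}, which is bounded on the compact $\T^d$). The domination inequality \eqref{eq:subsol} then gives $A_{t_n - T}(\gamma_n(T), y) \geq \mane (t_n - T) - 2\|\chi\|_\infty$, and rearranging the resulting inequality yields
\begin{equation*}
\int_0^T L(\gamma_n(s), u_n(s))\, ds \leq \Big(\int_0^{t_n} L(\gamma_n(s), u_n(s))\, ds - \mane t_n\Big) + \mane T + 2\|\chi\|_\infty.
\end{equation*}
By \eqref{eq:zeros}, the right-hand side is bounded in $n$, and then \eqref{eq:L0} delivers a bound of the form $\|u_n\|_{L^\sigma(0, T)}^\sigma \leq C(T)$.

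The rest is routine. Reflexivity of $L^\sigma(0, T; \R^m)$ (recall $\sigma > 1$) together with a standard diagonal extraction over $T = 1, 2, \dots$ yields a subsequence and a limit $\bar u \in L^\sigma_{\mathrm{loc}}([0, \infty); \R^m)$ with the weak convergence claimed in $(i)$. The dynamics $\dot\gamma_n = F(\gamma_n) u_n$, combined with $\|F\|_\infty < \infty$ on $\T^d$ and H\"older's inequality, gives a uniform $(1 - 1/\sigma)$-H\"older estimate for $\gamma_n$ on $[0, T]$; together with $\gamma_n(0) = x$, Arzel\`a--Ascoli produces a uniformly convergent subsequence, so $(ii)$ follows after another diagonal extraction. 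To verify that $(\bar\gamma, \bar u) \in \Gamma_{0, \infty}^{x \to}$, pass to the limit in
\begin{equation*}
\gamma_n(t) = x + \int_0^t F(\gamma_n(s))\, u_n(s)\, ds.
\end{equation*}
Since $F$ is Lipschitz on $\T^d$, $F(\gamma_n) \to F(\bar\gamma)$ uniformly on $[0, T]$, hence strongly in $L^{\sigma/(\sigma - 1)}$; combined with $u_n \rightharpoonup \bar u$ weakly in $L^\sigma$, this is enough to identify the limit of the product as $F(\bar\gamma)\bar u$ in $L^1$, yielding $\dot{\bar\gamma} = F(\bar\gamma)\bar u$ almost everywhere.

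The one delicate point is the lower bound on the tail cost: without a bounded critical subsolution at one's disposal, the coercivity \eqref{eq:L0} alone only controls the average $L^\sigma$-norm over $[0, t_n]$, which is too weak to bound $\|u_n\|_{L^\sigma(0,T)}$ on a fixed window. Once that bound is secured, the convergence statements $(i)$--$(ii)$ and the verification of the limiting dynamics are entirely standard.
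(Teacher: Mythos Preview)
Your argument is correct and, in outline, matches the paper's: obtain an $L^\sigma$ bound on the controls, extract a weakly convergent subsequence, apply Arzel\`a--Ascoli to the trajectories, and pass to the limit in the integrated state equation. The substantive difference lies in how the $L^\sigma$ bound is obtained. The paper writes directly
\[
\int_0^{t_n} L(\gamma_n, u_n)\,ds \;-\; \mane\, t_n \;\geq\; K_1 \int_0^{t_n} |u_n|^\sigma\,ds,
\]
and concludes a uniform bound on $\|u_n\|_{L^\sigma(0,t_n)}$; but this inequality follows from \eqref{eq:L0} only when $\mane + K_2 \leq 0$, whereas $L\geq -K_2$ forces $\mane \geq -K_2$, so in general the step is unjustified. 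Your route---fixing a window $[0,T]$, bounding the tail by $\int_T^{t_n} L \geq A_{t_n-T}(\gamma_n(T),y) \geq \mane(t_n-T) - 2\|\chi\|_\infty$ via a bounded dominated function $\chi$, and then invoking \eqref{eq:L0} on $[0,T]$---yields the correct localized bound $\|u_n\|_{L^\sigma(0,T)}^\sigma \leq C(T)$, which is exactly what the diagonal extraction needs. You also rightly identify this as the only non-routine point; the remaining compactness and limit-passage steps are standard and handled as in the paper.
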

\proof
For simplicity of notation we set $h:=\PB(x,y)$. From \eqref{eq:PB} it follows that there exists $\bar{n} \in \N$ such that for any $n \geq \overline{n}$ we have that
	\begin{equation*}
		\int_{0}^{t_{n}}{L(\gamma_{n}(s), u_{n}(s))\ ds}-\mane t_{n} \leq h+1.
	\end{equation*}
 On the other hand, by {\bf (L)} we obtain 
\begin{align*}
\int_{0}^{t_{n}}{L(\gamma_{n}(s), u_{n}(s))\ ds}-\mane t_{n} \geq K_1 \int_{0}^{t_{n}}{|u_{n}(s)|^{\sigma}\ ds}. 
\end{align*}
So,
\begin{equation*}
\int_{0}^{t_{n}}{|u_{n}(s)|^{\sigma}\ ds} \leq \frac{h+1}{K_1}, \quad \forall\ n \geq \overline{n}.	
\end{equation*}
Therefore, there exists a subsequence, still denoted by $\{u_{n}\}$, that weakly converges to an admissible control $\bar{u}$ in $L^{\sigma}$ on any compact subset of $[0,+\infty)$. 
So, for any $s \in [0,t]$ and any $n \geq \overline{n}$
\begin{align*}
\int_{0}^{t_{n}}|\dot\gamma_{n}(s)|^{\sigma}\ ds \leq \int_{0}^{t_{n}}|F(\gamma_{n}(s))|^{\sigma}|u_{n}(s)|^{\sigma}\ ds \leq 4\|F\|^{\sigma}_{\infty}\frac{h+1}{K_1}.	
\end{align*}
Hence, $\{\gamma_{n}\}_{n \in \N}$ is uniformly bounded in $W^{1,\sigma}(0,t; \T^{d})$ for any $t > 0$. Then, by the Ascoli-Arzela Theorem, up to extracting a further subsequence,  $\{\gamma_{n}\}_{n \in \N}$ uniformly converges to a curve $\bar{\gamma}$ on every compact subset of $[0,+\infty)$. 

Now, we claim that $(\bar\gamma, \bar{u})$ satisfies \eqref{eq:dynamics}. Indeed, for any $t \geq 0$ we have that 
\begin{equation*}
\gamma_{n}(t)= x + \sum_{i = 1}^{m}{\int_{0}^{t}{u_{i}^{n}(s)f_{i}(\gamma_{n}(s))\ ds}}. 
\end{equation*}
Thus, by the locally uniform convergence of $\gamma_{n}$ it follows that  $f_{i}(\gamma_{n}(t)) \to f_{i}(\bar\gamma(t))$, locally uniformly, for any $t \geq 0$, as $n \to +\infty$ for any $i = 1, \dots, m$. Therefore, taking $v \in \T^{d}$ we deduce that
\begin{equation*}
\langle v, \gamma_{n}(t) \rangle = \langle v, x \rangle + \sum_{i = 1}^{m}{\int_{0}^{t}{ u_{i}^{n}(s)\langle f_{i}(\gamma_{n}(s)), v \rangle \ ds}}, \quad \forall\ t \geq 0.	
\end{equation*}
As $n \to +\infty$ we get
\begin{equation*}
\langle v, \bar\gamma(t) \rangle = \langle v, x \rangle + \sum_{i = 1}^{m}{\int_{0}^{t}{ \bar{u}_{i}(s) \langle f_{i}(\bar\gamma(s)), v \rangle \ ds}}, \quad \forall\ t \geq 0.
\end{equation*}
Since  $v \in \T^{d}$ is arbitrary, the conclusion follows.
\qed

\begin{remark}\label{rem:conv}\em
Arguing as in the proof of  \Cref{lem:barrierconvergence}, one can prove the following. Given $h \in \R$, $\{ t_{n}\}_{n \in \N}$ and $(\gamma_{n}, u_{n}) \in \Gamma_{-t_{n}, 0}^{x \to y}$ such that
\begin{equation*}
t_{n} \to +\infty \quad \text{and} \quad \lim_{n \to +\infty} \int_{-t_{n}}^{0}{L(\gamma_{u_{n}}(s), u_{n}(s))\ ds}-\mane t_{n} = h, 
\end{equation*}
there exists a subsequence, still denoted by $(\gamma_{n}, u_{n})$, and a trajectory-control pair $(\bar\gamma, \bar{u})$ such that
\begin{itemize}
\item[($i$)]  $\{u_{n}\}_{n \in \N}$ weakly converges to $\bar{u}$ in $L^{\sigma}$ on any compact subset of $(-\infty, 0]$;
\item[($ii$)] $\{\gamma_{n}\}_{n \in \N}$ uniformly converges to $\bar\gamma$ on every compact subset of $(-\infty, 0]$.
\end{itemize} 
Moreover, given $x$, $y \in \T^d$ there exists $\{t_n\}_{n \in \N}$ with $t_n \uparrow \infty$ and $(\gamma_n, u_n) \in \Gamma_{-t_n, 0}^{x \to y}$ such that 
\begin{equation*}
\PB(x, y) = \lim_{n \to \infty}  \int_{-t_n}^{0} L(\gamma_n(s), u_n(s))\ ds - \mane t_n. 
\end{equation*}
%Indeed, by definition there exists $\{t_n\}_{n \in \N}$ with $t_n \uparrow \infty$ and $(\gamma_n, u_n) \in \Gamma_{0, t_n}^{x \to y}$ such that 
%\begin{equation*}
%\PB(x, y) = \lim_{n \to \infty}  \int_{0}^{t_n} L(\gamma_n(s), u_n(s))\ ds - \mane t_n. 
%\end{equation*}
%Then, define the control $\widehat{u}_n: [-t_n, 0] \to \T^d$ by 
%\begin{equation*}
%\widehat{u}_n(s) = u_n(t_n + s)
%\end{equation*}
%and set $(\widehat\gamma_n, \widehat{u}_n) \in \Gamma_{-t_n, 0}^{x \to y}$.
%Hence, 
%\begin{align*}
%& \lim_{n \to \infty} \int_{-t_n}^{0} L(\widehat\gamma_n(s), \widehat{u}_n(s))\ ds - \mane t_n 
%\\
% =\ & \lim_{n \to \infty} \int_{-t_n}^{0} L(\gamma_n(t_n + s), u_n(t_n + s))\ ds - \mane t_n 
% \\
% =\ & \lim_{n \to \infty} \int_{0}^{t_n} L(\gamma_n(s), u_n(s))\ ds - \mane t_n = \PB(x, y).  
%\end{align*}
\end{remark}

Next we show that there exists a calibrated curve for Peierls' barrier $\PB(x, \cdot)$ at any $x \in \T^{d}$.

\begin{proposition}\label{prop:viscositybarrier}
Let $x \in \T^{d}$. Then, for any $y \in \T^{d}$ there exists $(\bar\gamma, \bar{u}) \in \Gamma_{-\infty,0}^{\to y}$ calibrated for $\PB(x, \cdot)$ in $[-t, 0]$ for any $t > 0$, i.e., $(\bar\gamma, \bar{u})$ satisfies
\begin{equation}\label{eq:calibrata}
	\PB(x,y)-\PB(x,\bar\gamma(-t)) = \int_{-t}^{0}{L(\bar\gamma(s),\bar{u}(s))\ ds} - \mane t, \quad \forall\ t \geq 0.
\end{equation}
\end{proposition}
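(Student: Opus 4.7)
The plan is to build $(\bar\gamma, \bar u)$ by compactness from an action-minimizing sequence for the $\liminf$ defining $\PB(x, y)$, and then to verify the calibration identity on every window $[-t, 0]$ by combining lower semicontinuity on $[-t,0]$ with a sub-Riemannian continuity estimate for the action functional on $[-t_n, -t]$.

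By the last part of \Cref{rem:conv}, pick $t_n \uparrow \infty$ and $(\gamma_n, u_n) \in \Gamma_{-t_n, 0}^{x \to y}$ with $\int_{-t_n}^0 L(\gamma_n, u_n)\, ds - \mane t_n \to \PB(x,y)$, and then extract a subsequence (not relabeled) with $u_n \rightharpoonup \bar u$ in $L^\sigma$ on compacta and $\gamma_n \to \bar\gamma$ uniformly on every compact subset of $(-\infty, 0]$. The limit pair belongs to $\Gamma_{-\infty, 0}^{\to y}$ since $\bar\gamma(0) = \lim_n \gamma_n(0) = y$.

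Now fix $t > 0$, set $y_n := \gamma_n(-t)$ and $\bar y := \bar\gamma(-t)$, so that $y_n \to \bar y$. Split
\begin{equation*}
\int_{-t_n}^0 L(\gamma_n, u_n)\, ds - \mane t_n = A_n + B_n,
\end{equation*}
with $A_n := \int_{-t_n}^{-t} L(\gamma_n, u_n)\, ds - \mane(t_n - t)$ and $B_n := \int_{-t}^0 L(\gamma_n, u_n)\, ds - \mane t$. The standard lower semicontinuity of the action under uniform convergence of the state and weak $L^\sigma$ convergence of the controls (which uses $D^2_u L \geq 0$ from {\bf (L)}) yields $\liminf_n B_n \geq B$, where $B := \int_{-t}^0 L(\bar\gamma, \bar u)\, ds - \mane t$. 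For $A_n$, first restrict to the subinterval to obtain $A_n \geq A_{t_n - t}(x, y_n) - \mane(t_n - t)$. To bridge between $y_n$ and $\bar y$, choose a sub-Riemannian geodesic $(\eta_n, v_n)$ joining $y_n$ to $\bar y$ in time $\tau_n := d_{\SR}(y_n, \bar y) \to 0$ with $|v_n|$ uniformly bounded, so that $A_{\tau_n}(y_n, \bar y) \leq C\tau_n$. Concatenation and dynamic programming give $A_{t_n - t + \tau_n}(x, \bar y) \leq A_{t_n - t}(x, y_n) + A_{\tau_n}(y_n, \bar y)$, hence
\begin{equation*}
A_n \geq A_{t_n - t + \tau_n}(x, \bar y) - \mane(t_n - t + \tau_n) - (C - \mane)\tau_n.
\end{equation*}
Since $t_n - t + \tau_n \to \infty$, the very definition of $\PB$ as a $\liminf$ gives $\liminf_n A_n \geq \PB(x, \bar y)$.

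Combining, $\PB(x, y) = \lim_n (A_n + B_n) \geq \liminf_n A_n + \liminf_n B_n \geq \PB(x, \bar y) + B$. The reverse inequality is immediate from \eqref{eq:trian} together with $A_t(\bar y, y) \leq \int_{-t}^0 L(\bar\gamma, \bar u)\, ds$: $\PB(x, y) \leq \PB(x, \bar y) + A_t(\bar y, y) - \mane t \leq \PB(x, \bar y) + B$. The two bounds collapse to \eqref{eq:calibrata}. The main obstacle is the lower bound for $A_n$: because $\PB$ is only a $\liminf$, one cannot directly compare $A_{t_n - t}(x, y_n) - \mane(t_n - t)$ with $\PB(x, y_n)$ or $\PB(x, \bar y)$ at finite times; the short sub-Riemannian geodesic provides just the right cheap perturbation to lift the finite-time action inequality at the moving endpoint $y_n$ to the $\liminf$ at the limit point $\bar y$.
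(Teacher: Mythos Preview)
Your proof is correct and follows essentially the same approach as the paper: split the action on $[-t_n,-t]$ and $[-t,0]$, use lower semicontinuity on the second piece, bridge $\gamma_n(-t)$ to $\bar\gamma(-t)$ by a short sub-Riemannian geodesic to convert the first piece into a lower bound by $\PB(x,\bar\gamma(-t))$, and close with the triangle inequality \eqref{eq:trian}. The only difference is cosmetic---you phrase the geodesic bridge at the level of the action functional via subadditivity, while the paper writes out the concatenated control explicitly.
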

\proof

Fix $x$, $y \in \T^{d}$  and let $\{ t_{n}\}_{n \in \N}$, $(\gamma_{n}, u_{n}) \in \Gamma_{-t_{n}, 0}^{x \to y}$ be such that
\begin{equation*}
t_{n} \to \infty, \quad \text{and} \quad \lim_{n \to \infty} \int_{-t_{n}}^{0}{L(\gamma_{n}(s), u_{n}(s))\ ds} - \mane t_{n} = \PB(x,y).	
\end{equation*}
Then, by \Cref{rem:conv} there exists $(\bar\gamma, \bar{u})$ such that $u_{n}$ weakly converges to $\bar{u}$ and $\gamma_{n}$ uniformly converges to $\bar\gamma$, on every compact subset of $(-\infty,0]$.

 Fix $t \in [0,\infty)$, take $n \in \N$ such that $d_{n} = d_{\text{SR}}(\bar\gamma(-t), \gamma_{n}(-t)) \leq 1$ and  $t_{n} > t + 1$. Let $(\gamma_{0}, u_{0}) \in \Gamma_{-t, -t+d_{n}}^{\gamma_{n}(-t) \to \bar\gamma(-t)}$ be a geodesic pair and set
\begin{align*}
\widetilde{u}_{n}(s) = 
\begin{cases}
	u_{n}(s), & \quad s \in [-t_{n}, -t]
	\\
	u_{0}(s), & \quad s \in (-t, -t +d_{n}].
\end{cases}	
\end{align*}
We denote by $\widetilde\gamma_{n}$ the associated trajectory, that is, $(\widetilde\gamma_{n}, \widetilde{u}_{n}) \in \Gamma_{-t_{n}, -t+d_{n}}^{x \to \bar\gamma(-t)}$. 

%\begin{figure}[h]
%\begin{center}
%\begin{tikzpicture}[scale = 2, very thick]
%\draw plot [smooth, tension = 0.8] coordinates {(0, 1) (1, 1.25) (1.5, 0.75) (2, 0.5)}; 
%\draw plot [smooth, tension = 0.8] coordinates {(-0.5, -0.125) (0.75, 0) (1.5, 0.5) (2, 0.5)}; 
%\draw plot [smooth, tension = 0.8] coordinates {(0.75, 0) (0.65, 0.5) (1, 1) (1, 1.25)};
%\filldraw (0,1) node[left]{$x$} circle (0.02);
%\filldraw (2,0.5) node[right]{$y$} circle (0.02);
%\draw (-0.5, -0.125) node[below]{$-\infty$};
%\filldraw (1,1.25) node[above]{$\gamma_n(-t)$} circle (0.02);
%\filldraw (0.75,0) node[below]{$\bar{\gamma}(-t)$} circle (0.02);
%\node at (0,- 0.25) {$\bar{\gamma}(\cdot)$};
%\node at (0.125, 1.25) {$\gamma_n(\cdot)$};
%\end{tikzpicture}
%\caption{Construction of the curve $\bar\gamma$} 
%\label{fig:2}
%\end{center}
%\end{figure}

Then, defining the control $\widehat{u}_{n}(s)=\widetilde{u}_{n}(s+t_{n})$, so that $(\widehat\gamma_{n}, \widehat{u}_{n}) \in \Gamma_{0, t_{n}-t+d_{n}}^{x \to \bar\gamma(-t)}$, by {\bf (L)} we get 
\begin{align*}
& A_{t_{n}-t + d_{n}}(x, \bar\gamma(-t)) - \mane(t_{n}-t+d_{n})
\\
\leq\ & \int_{0}^{t_{n} - t + d_{n}}{L(\widehat\gamma_{n}(s), \widehat{u}_{n}(s))\ ds} - \mane (t_{n}-t+d_{n})
\\
=\ & \int_{-t_{n}}^{-t+d_{n}}{L(\widetilde\gamma_{n}(s), \widetilde{u}_{n}(s))\ ds} - \mane (t_{n}-t+d_{n})
\\
=\ & \int_{-t_{n}}^{-t}{L(\gamma_{n}(s), u_{n}(s))\ ds} + \int_{-t}^{-t+d_{n}}{L(\gamma_{0}(s), u_{0}(s))\ ds} - \mane (t_{n}-t+d_{n}).
\end{align*}

Since $\| u_{0}\|_{\infty, [-t, -t+d_{n}]} \leq 1$, by {\bf (L)} we obtain 
\begin{equation*}
\int_{-t}^{-t+d_{n}}{L(\gamma_{0}(s), u_{0}(s))\ ds} \leq 2d_{n}\|L(\cdot, \cdot)\|_{\infty, \T^d \times \overline{B}_1}
\end{equation*}
and so
\begin{align*}
& A_{t_{n}-t + d_{n}}(x, \bar\gamma(-t)) - \mane(t_{n}-t+d_{n})
\\
\leq\ & \int_{-t_{n}}^{-t}{L(\gamma_{n}(s), u_{n}(s))\ ds} + (2\|L(\cdot, \cdot)\|_{\infty, \T^d \times \overline{B}_1}-\mane)d_{n} - \mane(t_{n}-t).
\end{align*}
Hence, by the definition of Peierls barrier we have that 
\begin{equation*}
\PB(x, \bar\gamma(-t)) \leq \liminf_{n \to +\infty} \Big\{A_{t_{n}-t+d_{n}}(x, \bar\gamma(-t)) - \mane (t_{n}-t-d_{n})\Big\}.
\end{equation*}
Appealing to the lower-semicontinuity of the action, we obtain
\begin{equation*}
\int_{-t}^{0}{L(\bar\gamma(s), \bar{u}(s))\ ds} - \mane t \leq \liminf_{n \to +\infty}\left\{\int_{-t}^{0}{L(\gamma_{n}(s), u_{n}(s))\ ds} - \mane t \right\}.	
\end{equation*}
Therefore, combining the above estimates we get 
\begin{multline*}
 \PB(x, \bar\gamma(-t)) + \int_{-t}^{0}{L(\bar\gamma(s), \bar{u}(s))\ ds} - \mane t 
\\
\leq\  \liminf_{n \to +\infty} \Big\{A_{t_{n}-t+d_{n}}(x, \bar\gamma(-t)) - \mane (t_{n}-t-d_{n})\Big\} \\ + \int_{-t}^{0}{L(\bar\gamma(s), \bar{u}(s))\ ds} - \mane t
\\
\leq\  \liminf_{n \to +\infty}\Big\{(2\|L(\cdot, \cdot)\|_{\infty, \T^d \times \overline{B}_1} -\mane)d_{n} \\ + \int_{-t_{n}}^{-t}{L(\gamma_{n}(s), u_{n}(s))\ ds} - \mane(t_{n}-t) \Big\} 
\\
+\  \liminf_{n \to +\infty}\Big\{\int_{-t}^{0}{L(\gamma_{n}(s), u_{n}(s))\ ds} - \mane t \Big\}.	
\end{multline*}
By reordering the terms inside brackets we conclude that
\begin{multline*}
 \PB(x, \bar\gamma(-t)) + \int_{-t}^{0}{L(\bar\gamma(s), \bar{u}(s))\ ds} - \mane t 
\\
\leq \  \liminf_{n \to +\infty}\Big\{(2\|L(\cdot, \cdot)\|_{\infty, \T^d \times \overline{B}_1} - \mane)d_{n} \\ + \int_{-t_{n}}^{0}{L(\gamma_{n}(s), u_{n}(s))\ ds} -\mane t_{n} \Big\} = \PB(x,y).
\end{multline*}
Therefore,
\begin{equation}\label{eq:call}
\PB(x, y) - \PB(x,\bar\gamma(-t)) \geq \int_{-t}^{0}{L(\bar\gamma(s), \bar{u}(s))\ ds} - \mane t	.
\end{equation}

Next, we claim that
\begin{equation}\label{eq:call1}
 \PB(x,y) - \PB(x, \bar\gamma(-t)) \leq \int_{-t}^{0}{L(\bar\gamma(s), \bar{u}(s))\ ds} - \mane t.
\end{equation}
Indeed, by \eqref{eq:trian} we have that 
\begin{equation*}
 \PB(x,y) - \PB(x, \bar\gamma(-t)) \leq A_{t}(\bar\gamma(-t), y) - \mane t.
\end{equation*}
Hence, defining the control 
\begin{equation*}
\widehat{u}(s)=\bar{u}(s-t), \quad s \geq 0
\end{equation*}
and denoting by $\widehat\gamma$ the associated trajectory, we deduce that 
\begin{multline*}
\PB(x,y) - \PB(x, \bar\gamma(-t)) \leq\ A_{t}(\bar\gamma(-t), y) - \mane t
 \\
\leq\ \int_{0}^{t}{L(\widehat\gamma(s), \widehat{u}(s))\ ds} - \mane t  \\ =\ \int_{-t}^{0}{L(\bar\gamma(s), \bar{u}(s))\ ds} - \mane t. 
\end{multline*}
By combining \eqref{eq:call} and \eqref{eq:call1} we obtain \eqref{eq:calibrata}. \qed

%Note that, the above result still holds for forward-in-time calibrated curves up to considering the function $\PB(\cdot, x)$. This difference is due to the backward character of the Lax-Oleinik semigroup defined in this manuscript. We stress, at this point, that all the results proved so far still hold true if one defines the forward Lax-Oleinik semigroup
%\begin{equation*}
%T_t^+ \varphi(x) = \sup_{(\gamma, u) \in \Gamma_{0,t}^{x \to}} \left\{\varphi(\gamma(t)) + \int_{0}^{t} L(\gamma(s), u(s))\ ds\right\} \quad (\forall\ \varphi \in C(\T^d)).
%\end{equation*}

Next, we proceed to show that, for any point $x \in \T^d$, the Peierls barrier $\PB(x, \cdot)$ gives a fixed-point the Lax-Oleinik semigroup.

\begin{corollary}\label{cor:viscoHJ}
For each $x \in \T^{d}$ we have that the map $y \mapsto \PB(x,y)$ is a fixed-point of $T_t$, thus a solution of  \eqref{eq:HJcorrector},  on $\T^{d}$. 
\end{corollary}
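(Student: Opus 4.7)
The plan is to establish the two inequalities that together give the Lax-Oleinik fixed-point identity \eqref{eq:corrector2}, and then invoke the equivalence between \eqref{eq:corrector2} and \eqref{eq:HJcorrector} that was recalled right after their statement.

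First, for the easy direction, I would simply apply \eqref{eq:trian} of \Cref{lemma} with the roles of $y$ and $z$ swapped: for every $z \in \T^d$ and every $t \geq 0$ one has
\begin{equation*}
\PB(x,y) \leq \PB(x,z) + A_t(z,y) - \mane t,
\end{equation*}
so that
\begin{equation*}
\PB(x,y) \leq \inf_{z \in \T^d}\{\PB(x,z) + A_t(z,y)\} - \mane t.
\end{equation*}

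For the reverse inequality, the key input is \Cref{prop:viscositybarrier}. Applied to $y$, it yields a calibrated curve $(\bar\gamma, \bar u) \in \Gamma_{-\infty, 0}^{\to y}$ for $\PB(x, \cdot)$. Setting $z^{*} := \bar\gamma(-t)$ and using \eqref{eq:calibrata}, together with the fact that $(\bar\gamma\rvert_{[-t,0]}, \bar u\rvert_{[-t,0]}) \in \Gamma_{-t,0}^{z^{*} \to y}$ is admissible (so a time-shift makes it competitor in the definition \eqref{eq:actionfunctional} of $A_t(z^{*}, y)$), I would deduce
\begin{equation*}
\PB(x,y) - \PB(x, z^{*}) = \int_{-t}^{0} L(\bar\gamma(s), \bar u(s))\,ds - \mane t \geq A_t(z^{*}, y) - \mane t.
\end{equation*}
Consequently
\begin{equation*}
\PB(x,y) \geq \PB(x, z^{*}) + A_t(z^{*}, y) - \mane t \geq \inf_{z \in \T^d}\{\PB(x,z) + A_t(z,y)\} - \mane t.
\end{equation*}

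Combining the two inequalities gives the desired fixed-point identity. Since $\PB(x, \cdot)$ is continuous—indeed Lipschitz on $\T^d$ by \Cref{rem:peierlsubsolution} together with \Cref{lem:equilip}—and $\T^d$ is compact, the infimum is attained, so we recover the min formulation \eqref{eq:corrector2} exactly. The claim that $\PB(x,\cdot)$ solves \eqref{eq:HJcorrector} then follows from the equivalence between being a fixed-point of $T_t$ and being a viscosity solution of the critical Hamilton-Jacobi equation, as stated at the beginning of the subsection. The main (and only nontrivial) ingredient is the existence of the calibrated curve from \Cref{prop:viscositybarrier}; once that is in hand, the corollary is essentially a bookkeeping combination of the calibration identity \eqref{eq:calibrata} and the subadditivity \eqref{eq:trian}.
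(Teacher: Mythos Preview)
Your proposal is correct and follows essentially the same approach as the paper: both use \eqref{eq:trian} for one inequality and the calibrated curve from \Cref{prop:viscositybarrier} for the other, then appeal to the stated equivalence between \eqref{eq:corrector2} and \eqref{eq:HJcorrector}. The paper additionally remarks that the calibrated curve is a minimizer for $A_t$ (giving equality rather than your inequality $\int_{-t}^{0}L\,ds \geq A_t(z^*,y)$), and separately sketches the sub-/supersolution property directly, but your argument needs only the inequality and the fixed-point identity, so this is a harmless streamlining.
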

\proof
Note that $\overline\gamma$, constructed in \Cref{prop:viscositybarrier}, turns out to be calibrated for $\PB(x, \cdot)$ on any interval $[-t,0]$ with $t > 0$ and, thus, by \eqref{eq:trian} we deduce that such a curve is also a minimizer for the action $A_{t}(x, \overline\gamma(-t))$. From \Cref{rem:peierlsubsolution} we have that the map $y \mapsto \PB(x,y)$ is a subsolution to the critical equation for any $x \in \T^{d}$, whereas in view of the fact that $\overline\gamma$ is a minimizing curve we also have that such a map is a supersolution for the critical equation. 

In order to show that $\PB(x, \cdot)$ satisfies \eqref{eq:corrector2}, let $z$ be in $\T^{d}$. Then, \eqref{eq:trian} yields
	\begin{equation*}
	\PB(x,z) \leq \min_{y \in \T^{d}} \{\PB(x,y) + A_{t}(y,z)- \mane t\}.
	\end{equation*}
	Now, by \Cref{prop:viscositybarrier} we have that for any $x$, $z \in \T^{d}$ there exists $(\overline\gamma, \overline{u}) \in \Gamma_{-t,0}^{\to z}$ such that 
	\begin{equation*}
	\PB(x,z)-\PB(x,\bar\gamma(-t)) = \int_{-t}^{0}{L(\bar\gamma(s),\bar{u}(s))\ ds} - \mane t.
	\end{equation*}
	Consequently, there exists $y_{x,z} \in \T^{d}$ such that 
	\begin{equation*}
	\PB(x,z) = \PB(x,y_{x,z}) + A_{t}(y_{x,z},z) - \mane t. 
	\end{equation*}
	This shows that $\PB(x, \cdot)$ satisfies \eqref{eq:corrector2} for any $x \in \T^{d}$.  \qed

\subsection{Projected Aubry set}

In this section, we define the Aubry set and begin by showing some extra properties of  Peierls' barrier on such a set. We conclude by proving that the projected Aubry set $\A$ is a compact subset of $\T^{d}$.

\begin{definition}[{\bf Projected Aubry set}]
		The projected Aubry set $\A$ is defined by
		\begin{equation*}
			\A = \big\{x \in \T^{d} : \PB(x,x)=0 \big\}. 
		\end{equation*}	
		\end{definition}

\begin{theorem}
$\A$ is nonempty. 
\end{theorem}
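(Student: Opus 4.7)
The plan is to produce an explicit point $z \in \T^d$ at which Peierls' barrier vanishes, using a standard recurrence argument on a calibrated curve.

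First, I would fix any $x_0 \in \T^d$ and work with the function $\chi(\cdot) := \PB(x_0, \cdot)$, which by \Cref{cor:viscoHJ} is a fixed-point of the Lax-Oleinik semigroup $T_t$ and in particular is continuous on $\T^d$. By \Cref{prop:viscositybarrier} applied at some $y \in \T^d$, there exists a trajectory-control pair $(\bar\gamma, \bar u) \in \Gamma_{-\infty,0}^{\to y}$ calibrated for $\chi$, so that for all $0 < s < t$
\begin{equation*}
\chi(\bar\gamma(-s)) - \chi(\bar\gamma(-t)) = \int_{-t}^{-s} L(\bar\gamma(\tau), \bar u(\tau))\,d\tau - \mane(t-s).
\end{equation*}

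Next, exploiting compactness of $\T^d$, I would extract a sequence $\tau_k \uparrow \infty$ with $\bar\gamma(-\tau_k) \to z$ for some $z \in \T^d$, and then pick pairs of indices giving times $s_n < t_n$ with $t_n - s_n \to \infty$ and $\bar\gamma(-s_n), \bar\gamma(-t_n) \to z$. By continuity of $\chi$, the calibration identity above gives
\begin{equation*}
\int_{-t_n}^{-s_n} L(\bar\gamma(\tau), \bar u(\tau))\,d\tau - \mane(t_n - s_n) = \chi(\bar\gamma(-s_n)) - \chi(\bar\gamma(-t_n)) \;\longrightarrow\; 0.
\end{equation*}
Since the shifted restriction of $(\bar\gamma,\bar u)$ to $[-t_n,-s_n]$ is admissible from $\bar\gamma(-t_n)$ to $\bar\gamma(-s_n)$, this also bounds $A_{t_n-s_n}(\bar\gamma(-t_n), \bar\gamma(-s_n)) - \mane(t_n-s_n)$ from above by a quantity tending to zero.

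Then I would close up the loop at $z$ by pre-pending a sub-Riemannian geodesic from $z$ to $\bar\gamma(-t_n)$ and appending one from $\bar\gamma(-s_n)$ back to $z$, of lengths $d_n = d_{\SR}(z,\bar\gamma(-t_n))$ and $d_n' = d_{\SR}(\bar\gamma(-s_n),z)$ respectively. Parametrizing each geodesic by arc length gives unit controls, so by {\bf (L)} the cost of each geodesic is at most $\|L\|_{\infty,\T^d\times\overline{B}_1}(d_n+d_n')$, exactly as in the splicing argument in the proof of \Cref{prop:viscositybarrier}. Concatenating, and since $d_n+d_n'\to 0$ while $t_n-s_n\to\infty$, I obtain
\begin{equation*}
\liminf_{T\to\infty} \big[A_T(z,z) - \mane T\big] \leq 0, \qquad \text{i.e. } \PB(z,z) \leq 0.
\end{equation*}
For the reverse inequality, \Cref{rem:peierlsubsolution} gives $\PB(x_0,\cdot) \prec L - \mane$, so applied to any loop at $z$ we get $0 = \PB(x_0,z) - \PB(x_0,z) \leq A_t(z,z) - \mane t$ for all $t > 0$, hence $\PB(z,z) \geq 0$. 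This forces $\PB(z,z)=0$, so $z \in \A$.

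The main technical obstacle is the concatenation step: to pass from a bound on $A_{t_n-s_n}(\bar\gamma(-t_n),\bar\gamma(-s_n))$ to one on $A_{T_n}(z,z)$ one must glue in two short sub-Riemannian geodesics while keeping their action costs negligible. This is where the assumption {\bf (F)} (which, together with the H\"ormander condition used in the paper, ensures bracket-generation and hence the existence of geodesics joining nearby points with $L^\infty$ controls of controlled size) and the mild growth hypothesis {\bf (L)} are essential, in exactly the form already exploited in the proof of \Cref{prop:viscositybarrier}.
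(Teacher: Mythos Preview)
Your proposal is correct and follows essentially the same route as the paper: pick a calibrated curve for $\PB(x_0,\cdot)$, take a point $z$ in its $\alpha$-limit set, use calibration plus continuity (the paper uses the Lipschitz bound from \Cref{lem:equilip}) to make the middle action small, and splice in short sub-Riemannian geodesics to close the loop at $z$. The only cosmetic differences are that the paper organizes the argument via an $\eps$-parameter and $\inf_{t>1/\eps}$ rather than sequences, and it leaves the easy inequality $\PB(z,z)\geq 0$ implicit whereas you spell it out.
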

\proof 

We proceed to show that the $\alpha$-limit set of calibrated curves to $\PB$, which is known to be nonempty from the compactness of the state space, is a subset of $\A$. 
To do so, appealing to Proposition \ref{prop:viscositybarrier}, let $x_0$ and let $y_0 \in \T^d$ and consider $(\gamma, u) \in \Gamma_{-\infty, 0}^{\to y_0}$ calibrated for $\PB(x_0, \cdot)$. For simplicity of notation, we denote by $\Omega$ the $\alpha$-limit set of the calibrated curve $\gamma$. 
Let $x \in \Omega$. Then, there exists $\{t_n\}_{n \in \N}$ with $t_n \uparrow \infty$ such that $\lim_{n \to \infty} \gamma(t_n) = x$. Fix, $\eps > 0$ and $n_1$, $n_2 \in \N$ such that 
\[
|t_{n_1} - t_{n_2}| > \frac{1}{\eps}, \quad \textrm{and}\quad \gamma(-t_{n_1}),\, \gamma(-t_{n_2}) \in B_{\eps}(x)
\]
where $B_{\eps}(x)$ denotes the ball of radius $\eps$ and center $x$ w.r.t. sub-Riemannian distance. Set $d_1 = d_{\SR}(x, \gamma(-t_{n_1}))$, $d_2 = d_{\SR}(x, \gamma(-t_{n_2})$ and let $(\xi_1, u_1) \in \Gamma_{-t_{n_1}+d_1, -t_{n_1}}^{\gamma(-t_{n_1}) \to x}$, $(\xi_2, u_2) \in \Gamma_{-t_{n_2}, -t_{n_2} - d_2}^{x \to \gamma(-t_{n_2})}$ be two geodesic pairs. Define the trajectory-control pair $(\hat\gamma, \hat u) \in \Gamma_{-t_{n_2} - d_2, -t_{n_1} + d_1}^{x \to x}$ as follows
\begin{equation*}
\hat u(s) = 
\begin{cases}
u_2(s), & s \in [-t_{n_2}-d_2, -t_{n_2}]
\\
u(s), & s \in (-t_{n_2}, -t_{n_1}]
\\
u_1(s), & s \in (-t_{n_1}, -t_{n_1} + d_1]. 
\end{cases}
\end{equation*}
%See Figure 2 for a representation of the resulting curve $\hat\gamma$. 
%
%\begin{figure}[h]
%\begin{center}
%\begin{tikzpicture}[scale = 1]
%\begin{axis}[hide axis] 
% \addplot[-stealth,domain=1.25:2.25,samples=401,smooth,very thick,point meta=y] 
% 	({1.5*(x+0.4*x*x)+x*sin(-x*360)},{-x*cos(-360*x)});
% \addplot[-stealth,domain=2.24:3.25,samples=401,smooth,very thick,point meta=y] 
% 	({1.5*(x+0.4*x*x)+x*sin(-x*360)},{-x*cos(-360*x)});
% \addplot[-stealth,domain=3.24:4.25,samples=401,smooth,very thick,point meta=y] 
% 	({1.5*(x+0.4*x*x)+x*sin(-x*360)},{-x*cos(-360*x)}); 	
% 	 \addplot[domain=4.24:4.5,samples=401,smooth,very thick,point meta=y] 
% 	({1.5*(x+0.4*x*x)+x*sin(-x*360)},{-x*cos(-360*x)}); 	
%\end{axis}
%\filldraw (2.75,5)node[above, yshift = .5ex]{$x$} circle(0.075);
%\filldraw (2.2, 4)  circle (0.075);
%\node at (2.4, 3.7) {$\gamma(-t_{n_2})$};
%\filldraw (3.7, 4.5) circle (0.075);
%\node at (4, 4.15) {$\gamma(-t_{n_1})$};
%\draw[ultra thick] (2.2, 4) [bend left = 30]to node[left]{$\xi_1$} (2.75,5);
%\draw[ultra thick] (2.75, 5) [bend left = 30]to node[right, yshift=.3ex, xshift=.5ex]{$\xi_2$} (3.7,4.5);
%\node at (6.25, 5) {$\gamma$};
%\end{tikzpicture}
%\caption{Construction of $\hat\gamma$.}
%\end{center}
%\end{figure}

So, we have 
\begin{align}\label{pp}
\begin{split}
& \inf_{t > \frac{1}{\eps}} [A_t (x, x)  - \mane t]
\\
 \leq\ & A_{d_1}(x, \gamma(-t_{n_1})) + A_{t_{n_2} - t_{n_1}}(\gamma(-t_{n_1}), \gamma(-t_{n_2})) + A_{d_2}(\gamma(-t_{n_{2}}), x)
 \\
 -\ & \mane (t_{n_2} - t_{n_1} + d_1 + d_2). 
 \end{split}
\end{align}
Note that, since $\gamma(-t_{n_1})$, $\gamma(-t_{n_2}) \in B_{\eps}(x)$ and $(\xi_1, u_1)$, $(\xi_2, u_2)$ being geodesics we immediately obtain 
\begin{equation}\label{pp1}
A_{d_i}(x, \gamma(-t_{n_i})) - \mane d_i \leq (\|L(\cdot, \cdot)\|_{\infty, \T^d \times \overline{B}_1} + |\mane|)\eps \quad (i=1, 2). 
\end{equation}
Hence, it remains to estimate $A_{t_{n_2} - t_{n_1}}(\gamma(-t_{n_1}, -t_{n_2}) - \mane (t_{n_2} - t_{n_1})$. Recalling that $(\gamma, u)$ is calibrated for $\PB(x_0, \cdot)$ and that $\PB(\cdot)$ is a fixed-point of $T_t$ we deduce that 
\begin{multline}\label{pp2}
A_{t_{n_2} - t_{n_1}}(\gamma(-t_{n_1}), \gamma(-t_{n_2})) - \mane (t_{n_2} - t_{n_1}) \\ = \PB(x_0, \gamma(-t_{n_1})) - \PB(x_0, \gamma(-t_{n_2})) \leq C_L d_{\SR}(\gamma(-t_{n_1}), \gamma(-t_{n_2})) \leq C_L \eps. 
\end{multline}
Finally, combining \eqref{pp1} and \eqref{pp2} with \eqref{pp} we get
\begin{equation*}
\inf_{t > \frac{1}{\eps}} [A_t(x, x) - \mane t] \leq (C_L + \|L(\cdot, \cdot)\|_{\infty, \T^d \times \overline{B}_1} + |\mane|) \eps
\end{equation*}
that yields to the conclusion taking the supremum over $\eps > 0$ on the right hand side. 
\qed

Next, we provide a characterization of the points in $\A$. 

\begin{theorem}\label{prop:aubrycal}
Let $x \in \T^{d}$. Then, the following properties are equivalent.
\begin{itemize}
\item[($i$)] $x \in \A$. 
\item[($ii$)] There exists a trajectory-control pair $(\gamma_{x}, u_{x}) \in \Gamma_{-\infty, 0}^{\to x} \cap \Gamma_{0,\infty}^{x \to}$ such that  
\begin{equation}\label{eq:rel1}
\PB(\gamma_{x}(t), x) = - \int_{0}^{t}{L(\gamma_{x}(s), u_{x}(s))\ ds} + \mane t	
\end{equation}
and
\begin{equation}\label{eq:rel2}
\PB(x, \gamma_{x}(-t)) = - \int_{-t}^{0}{L(\gamma_{x}(s), u_{x}(s))\ ds} + \mane t.	
\end{equation}
\end{itemize}
Moreover, the trajectory $\gamma_{x}$ in ($ii$) satisfies
\begin{equation}\label{eq:inclusion}
	\gamma_{x}(t) \in \A, \quad t \in \R.
	\end{equation}
\end{theorem}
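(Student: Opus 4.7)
The proof splits into three parts, which I would address in the order $(ii)\Rightarrow(i)$, then $(i)\Rightarrow(ii)$, and finally the invariance $\gamma_x(t)\in\A$ for all $t\in\R$.

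For $(ii)\Rightarrow(i)$: first note that $\PB(x,x)\geq 0$ automatically, since by \Cref{rem:peierlsubsolution} we have $\PB(x,\cdot)\prec L-\mane$, and applying this to any loop at $x$ of duration $T$ yields $0\leq \int_0^T L-\mane T$, hence $\PB(x,x)=\liminf_T(A_T(x,x)-\mane T)\geq 0$. For the reverse bound, combine \eqref{eq:trian} with $y=\gamma_x(-t)$, the trivial estimate $A_t(\gamma_x(-t),x)\leq\int_{-t}^{0} L(\gamma_x,u_x)\,ds$ obtained from the curve $\gamma_x$ itself, and \eqref{eq:rel2}:
\[
\PB(x,x)\leq \PB(x,\gamma_x(-t))+A_t(\gamma_x(-t),x)-\mane t\leq \Bigl(-\int_{-t}^0 L+\mane t\Bigr)+\Bigl(\int_{-t}^0 L-\mane t\Bigr)=0,
\]
so $\PB(x,x)=0$ and $x\in\A$.

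For $(i)\Rightarrow(ii)$, the plan is to build the two halves of $\gamma_x$ separately and glue them at $0$. The backward half is given for free by \Cref{prop:viscositybarrier} applied with $y=x$: the calibration identity there together with $\PB(x,x)=0$ is exactly \eqref{eq:rel2} for some $(\bar\gamma_-,\bar u_-)\in\Gamma_{-\infty,0}^{\to x}$. The forward half demands more work. Using $\PB(x,x)=0$, pick $t_n\uparrow\infty$ and loops $(\gamma_n,u_n)\in\Gamma_{0,t_n}^{x\to x}$ with $\int_0^{t_n} L-\mane t_n\to 0$. The essential estimate is a uniform $L^\sigma$-bound on $u_n|_{[0,T]}$ for every fixed $T>0$: splitting $\int_0^{t_n}L=\int_0^T L+\int_T^{t_n} L$, lower-bounding the tail by $A_{t_n-T}(\gamma_n(T),x)-\mane(t_n-T)\geq -\sup_{\T^d}\PB(x,\cdot)$ (available because $\PB(x,\cdot)$ is dominated and, by \Cref{lem:equilip}, Lipschitz hence bounded), and using coercivity \eqref{eq:L0}, yields $\int_0^T|u_n|^\sigma\leq C(T)$. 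A diagonal extraction as in \Cref{lem:barrierconvergence} then produces $(\bar\gamma_+,\bar u_+)\in\Gamma_{0,\infty}^{x\to}$. To verify \eqref{eq:rel1}, prove the two directions separately: the lower bound $\PB(\bar\gamma_+(t),x)\geq -\int_0^t L+\mane t$ follows from $0=\PB(x,x)\leq\PB(x,\bar\gamma_+(t))+\PB(\bar\gamma_+(t),x)$ together with $\PB(x,\bar\gamma_+(t))\leq A_t(x,\bar\gamma_+(t))-\mane t\leq\int_0^t L-\mane t$; the upper bound comes from the tail estimate $A_{t_n-t}(\gamma_n(t),x)-\mane(t_n-t)\leq \mane t-\int_0^t L(\gamma_n,u_n)+o(1)$, corrected for the endpoint mismatch via a short sub-Riemannian geodesic exactly as in the proof of \Cref{prop:viscositybarrier} and passed to the $\liminf$ using lower semicontinuity of the action.

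For the invariance, apply sub-additivity $\PB(\gamma_x(t),\gamma_x(t))\leq\PB(\gamma_x(t),x)+\PB(x,\gamma_x(t))$. For $t\geq 0$ the first term equals $-\int_0^t L+\mane t$ by \eqref{eq:rel1}; the second is controlled by concatenating $\gamma_x|_{[0,t]}$ with an approximate minimizing loop at $x$ of large duration $\tau$ (available since $x\in\A$), giving $A_{t+\tau}(x,\gamma_x(t))-\mane(t+\tau)\leq\int_0^t L-\mane t+o(1)$ and hence $\PB(x,\gamma_x(t))\leq\int_0^t L-\mane t$. The sum is $\leq 0$, and combined with $\PB(\gamma_x(t),\gamma_x(t))\geq 0$ from the first paragraph we get equality. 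The case $t<0$ is symmetric, using \eqref{eq:rel2} together with a concatenation on the other side. The main obstacle in the whole argument is the forward construction: \Cref{prop:viscositybarrier} delivers only backward calibrated curves, so one must extract a limit of approximate loops, and the delicate point is that the action budget $\int_0^{t_n}L-\mane t_n\to 0$ is merely a global average, yet one must still establish a uniform $L^\sigma$ bound on every fixed window $[0,T]$, which is exactly where the boundedness of $\PB(x,\cdot)$ on $\T^d$ enters.
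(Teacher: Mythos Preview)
Your argument is correct and follows the same overall strategy as the paper: approximate loops at $x$, compactness, geodesic endpoint correction, and triangle/domination inequalities for the Peierls barrier. A few organizational differences are worth flagging. For the backward half you invoke \Cref{prop:viscositybarrier} with $y=x$, which is a clean shortcut; the paper instead redoes the forward construction in mirror image (``similar arguments''). Conversely, for the forward half you re-derive the local $L^\sigma$-bound via a tail estimate, whereas the paper simply applies \Cref{lem:barrierconvergence} directly, since the loops $(\gamma_n,u_n)\in\Gamma_{0,t_n}^{x\to x}$ satisfy its hypothesis with limit $\PB(x,x)=0$; your extra work here is correct but unnecessary. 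Finally, for the invariance the paper obtains $\PB(x,\gamma_x(t))\leq\int_0^t L(\gamma_x,u_x)\,ds-\mane t$ straight from the domination $\PB(x,\cdot)\prec L-\mane$ together with $\PB(x,x)=0$ (this inequality is in fact already recorded inside the forward construction), so your concatenation with an approximate loop at $x$, while valid, is heavier than needed.
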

\proof 

We start by proving that ($i$) implies ($ii$) and, in particular, we begin with \eqref{eq:rel1}. Since $x \in \A$, we have that $\PB(x,x)=0$ . So there exist $\{ t_{n}\}_{n \in \N}$ and $(\gamma^{+}_{n}, u^{+}_{n}) \in  \Gamma_{0,t_{n}}^{x \to x}$ such that
\begin{equation}\label{eq:zeros1}
t_{n} \to +\infty \quad \text{and} \quad \lim_{n \to +\infty} \int_{0}^{t_{n}}{L(\gamma^{+}_{n}(s), u^{+}_{n}(s))\ ds}-\mane t_{n} = 0. 	
\end{equation}
Then, by \Cref{lem:barrierconvergence} there exists $(\gamma^{+}_{x}, u^{+}_{x}) \in \Gamma_{0,\infty}^{x \to}$ such that $u^{+}_{n}$ weakly converges to $u^{+}_{x}$ and $\gamma^{+}_{n}$ uniformly converges to $\gamma^{+}_{x}$, on every compact subset of $[0,\infty)$, respectively.

Fix $t \in [0,+\infty)$ and fix $n$ large enough such that $d_{n} := d_{\SR}(\gamma^{+}_{x}(t), \gamma^{+}_{n}(t)) \leq 1$ and $t +1 < t_{n}$. Let $(\gamma_{0}, u_{0}) \in \Gamma_{t, t+ d_{n}}^{\gamma^{+}_{x}(t) \to\ \gamma^{+}_{n}(t)}$ be a geodesic pair and let $\widetilde{u}_{n} \in L^{2}(t, t_{n}+d_{n})$ be such that 
\begin{align*}
\widetilde{u}_{n}(s) = 
\begin{cases}
	u_{0}(s), & \quad s \in [t, t+d_{n}]
	\\
	u^{+}_{n}(s-d_{n}), & \quad s \in (t+d_{n}, t_{n}+d_{n}],
\end{cases}	
\end{align*}
so that, $(\widetilde\gamma_{n}, \widetilde{u}_{n}) \in \Gamma_{t, t_{n} + d_{n}}^{\gamma^{+}_{x}(t) \to x}$.  
Then, recalling that $\|u_0\|_{\infty, [t, t+d_{n}]} \leq 1$ 
we obtain 
\begin{equation*}
\int_{t}^{t+d_{n}}{L(\gamma_{1}^{n}(s), u_{1}^{n}(s))\ ds} \leq 2d_{n}\|L(\cdot, \cdot)\|_{\infty, \T^d \times \overline{B}_1}.
\end{equation*}
Hence,
\begin{align*}
\int_{t}^{t_{n}}{L(\widetilde\gamma_{n}(s), \widetilde{u}_{n}(s))\ ds} =\ &  \int_{t}^{t+d_{n}}{L(\gamma_{0}(s), u_{0}(s))\ ds} + \int_{t + d_{n}}^{t_{n}+d_{n}}{L(\gamma_{n}(s), u_{n}(s))\ ds} 
\\
\leq\ &   2\|L(\cdot, \cdot)\|_{\infty, \T^d \times \overline{B}_1} d_{n} + \int_{t + d_{n}}^{t_{n}+d_{n}}{L(\gamma_{n}(s), u_{n}(s))\ ds}.
\end{align*}
Now, defining $\widehat{u}_{n}(s)=\widetilde{u}_{n}(s+t)$, that is, $(\widehat\gamma_{n}, \widehat{u}_{n}) \in \Gamma_{0, t_{n}-t}^{ \gamma^{+}_{x}(t) \to x}$, we get
\begin{multline*}
 \PB(\gamma^{+}_{x}(t), x) \leq\  \liminf_{n \to +\infty} \big[A_{t_{n}+d_{n}-t} - \mane(t_{n} + d_{n} -t)\big]
\\
\leq\  \liminf_{n \to +\infty} \left[ \int_{0}^{t_{n} + d_{n} -t}{L(\widehat\gamma_{n}(s), \widehat{u}_{n}(s))\ ds} -\mane(t_{n} + d_{n} -t)\right]
\\
\leq\  \liminf_{n \to +\infty} \Big[\int_{t}^{t+d_{n}}{L(\gamma_{0}(s), u_{0}(s))\ ds} + \int_{t + d_{n}}^{t_{n} + d_{n}}{L(\widetilde\gamma_{n}(s), \widetilde{u}_{n}(s))\ ds} \\ - \mane (t_{n} + d_{n} -t)\Big]
\\
\leq\   \liminf_{n \to +\infty} \Big[ 2\|L(\cdot, \cdot)\|_{\infty, \T^d \times \overline{B}_1}d_{n} + \int_{t + d_{n}}^{t_{n} + d_{n}}{L(\gamma^{+}_{n}(s-d_{n}), u^{+}_{n}(s-d_{n}))\ ds} \\ - \mane (t_{n} + d_{n} -t)\Big]
\\
=\  \liminf_{n \to +\infty} \Big[ \big(2\|L(\cdot, \cdot)\|_{\infty, \T^d \times \overline{B}_1} - \mane\big) d_{n} + \int_{0}^{t_{n}}{L(\gamma^{+}_{n}(s), u^{+}_{n}(s))\ ds}
\\
- \mane t_{n}  - \int_{0}^{t}{L(\gamma^{+}_{n}(s), u^{+}_{n}(s))\ ds} + \mane t \Big].	
\end{multline*}
Then, by \eqref{eq:zeros1}, the uniform convergence of $\gamma_{n}$, the fact that $d_{n} \downarrow 0$ and the lower semicontinuity of the functional we deduce that
\begin{align*}
\int_{0}^{t}{L(\gamma^{+}(s), u^{+}_{x}(s))\ ds} - \mane t + \PB(\gamma^{+}_{x}(t), x) \leq 0. 	
\end{align*}
Moreover, since $\PB(x, \cdot)$ is dominated by $L-\mane$ we also have that
\begin{align*}
\PB(x, \gamma^{+}_{x}(t)) = \PB(x, \gamma^{+}_{x}(t)) - \PB(x,x) \leq \int_{0}^{t}{L(\gamma^{+}_{n}(s), u^{+}_{n}(s))\ ds} - \mane t	
\end{align*}
and by \eqref{eq:peierls} we know that $\PB(x, \gamma^{+}_{x}(t)) + \PB(\gamma^{+}_{x}(t),x ) \geq 0$. Therefore, we obtain
\begin{equation*}
\int_{0}^{t}{L(\gamma^{+}_{x}(s), u^{+}_{x}(s))\ ds} - \mane t +\PB(\gamma^{+}_{x}(t), x) = 0. 	
\end{equation*}
Similar arguments show that there exists $(\gamma^{-}_{x}, u^{-}_{x}) \in \Gamma_{-\infty,0}^{\to x}$ such that \eqref{eq:rel2} holds. 
By taking the sum of \eqref{eq:rel1} and \eqref{eq:rel2} one immediately get that ($ii$) implies ($i$). 
Finally, observe that the inequality 
\begin{equation}\label{eq:claiminclusion}
\PB(\gamma_{x}(t), \gamma_{x}(t)) \leq 0 \quad (t \in\R)
\end{equation}
suffices to show \eqref{eq:inclusion}. Indeed, by \Cref{lemma} we have that $\PB(\gamma_{x}(t), \gamma_{x}(t)) \geq 0$. 
In view of \eqref{eq:peierls}, the following holds 
\begin{equation}\label{eq:triangular}
\PB(\gamma_{x}(t), \gamma_{x}(t)) \leq \PB(\gamma_{x}(t), x) + \PB(x, \gamma_{x}(t)) \quad (t \in \R). 
\end{equation}
Hence, combining \eqref{eq:rel1} and \eqref{eq:rel2} with \eqref{eq:triangular} we get \eqref{eq:claiminclusion}. \qed

We conclude this section showing that the projected Aubry set a compact subset of $\T^d$.

\begin{proposition}\label{prop:closedaubry}
$\A$ is a closed subset of $\T^{d}$.	
\end{proposition}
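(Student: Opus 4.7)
The plan is to show the complement of $\A$ in $\T^{d}$ is open: given any sequence $\{x_{n}\} \subset \A$ with $x_{n} \to x$, I want $\PB(x,x) = 0$. First I would record the universal lower bound $\PB(z,z) \geq 0$ for every $z \in \T^{d}$, which follows immediately from \eqref{eq:peierls} with $y = z$ yielding $\PB(z,z) \leq 2\PB(z,z)$. So the real work is in showing $\PB(x,x) \leq 0$.

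The idea is to manufacture admissible loops at $x$ out of near-minimizing loops at $x_{n}$ by prepending and appending two short sub-Riemannian geodesics between $x$ and $x_{n}$. Fix $\eps > 0$. For $n$ large, set $d_{n} := d_{\SR}(x, x_{n}) \leq \min\{1, \eps\}$. Since $\PB(x_{n}, x_{n}) = 0$, the definition \eqref{eq:PB} provides $t_{n} > 1/\eps$ and $(\gamma_{n}, u_{n}) \in \Gamma_{0, t_{n}}^{x_{n} \to x_{n}}$ with
\begin{equation*}
\int_{0}^{t_{n}} L(\gamma_{n}(s), u_{n}(s))\, ds - \mane t_{n} < \eps.
\end{equation*}
I pick sub-Riemannian geodesic pairs $(\alpha_{n}, v_{n}) \in \Gamma_{0, d_{n}}^{x \to x_{n}}$ and $(\beta_{n}, w_{n}) \in \Gamma_{0, d_{n}}^{x_{n} \to x}$ with $\|v_{n}\|_{\infty}, \|w_{n}\|_{\infty} \leq 1$, and concatenate $\alpha_{n}$, a translate of $\gamma_{n}$, and $\beta_{n}$ to produce an admissible loop at $x$ on the horizon $T_{n} := 2 d_{n} + t_{n}$.

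Using {\bf (L)} to bound $L$ uniformly on the compact set $\T^{d} \times \overline{B}_{1}$, the cost of the three pieces gives
\begin{equation*}
A_{T_{n}}(x, x) - \mane T_{n} \leq 2 d_{n}\bigl(\|L\|_{\infty, \T^{d} \times \overline{B}_{1}} + |\mane|\bigr) + \eps \leq C \eps,
\end{equation*}
for a constant $C$ depending only on $L$ and $\mane$. Since $T_{n} \geq t_{n} > 1/\eps$ can be taken arbitrarily large while keeping the above bound, we conclude $\liminf_{t \to \infty}[A_{t}(x, x) - \mane t] \leq C \eps$, i.e., $\PB(x, x) \leq C \eps$. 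Letting $\eps \to 0$ and combining with the lower bound yields $\PB(x, x) = 0$ and hence $x \in \A$.

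The main delicacy is coupling two independent smallness parameters — the smallness of $d_{n}$ coming from $x_{n} \to x$, and the near-saturation of the $\liminf$ defining $\PB(x_{n}, x_{n}) = 0$ — through a single scale $\eps$, so that the horizon $T_{n}$ can still be pushed to $\infty$ while the cost excess stays $O(\eps)$. Once that coupling is set up, the remainder is a routine estimate via assumption {\bf (L)} and the arc-length parameterization of sub-Riemannian geodesics, in the same spirit as the concatenation arguments already used in the proof of \Cref{prop:viscositybarrier} and the non-emptiness theorem for $\A$.
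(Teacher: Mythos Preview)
Your proof is correct and follows essentially the same approach as the paper: both arguments concatenate short sub-Riemannian geodesics from $x$ to $x_{n}$ and back with a near-optimal loop at $x_{n}$ to produce admissible loops at $x$ whose excess cost over $\mane T$ tends to zero, yielding $\PB(x,x)\le 0$. Your organization via a single scale $\eps$ is slightly cleaner, and you correctly avoid the paper's superfluous appeal to \Cref{lem:barrierconvergence} (the extracted limit pair $(\bar\gamma,\bar u)$ is never used there).
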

\proof
Let $\{x_{n}\}_{n \in \N}$ be a sequence in $\A$ such that $\displaystyle{\lim_{n \to \infty}} x_{n}=x \in \T^{d}$. We have to show that $x \in \A$. 

By definition we have that there exist subsequence, still labeled by $\{t_{n}\}_{n \in \N}$, and $\{(\gamma_{k_{n}}, u_{k_{n}})\}_{n \in \N} \in \Gamma_{0,t_{n}}^{x_{n} \to x_{n}}$ such that 
\begin{equation*}
\int_{0}^{t_{n}}{L(\gamma_{k_{n}}(s), u_{k_{n}}(s))\ ds} - \mane t_{n} \leq \frac{1}{n}.
\end{equation*}
Then, by \Cref{lem:barrierconvergence} there exists $(\bar\gamma, \bar{u})$ such that $u_{n}$ weakly converges to $\bar{u}$ and $\gamma_{n}$ uniformly converges to $\bar\gamma$, on every compact subset of $[0,\infty)$, respectively. Let us define $d_{n} = d_{\SR}(x_{n}, x)$ and the control 
\begin{align*}
\widetilde{u}_{n}(s)
\begin{cases}
	u^{n}_{1}(s), & \quad s \in [-d_{n}, 0]
	\\
	u_{n}(s), & \quad s \in (0, t_{n}]
	\\
	u^{n}_{2}(s), & \quad s \in (t_{n}, t_{n} + d_{n}]
\end{cases}	
\end{align*}
where $(\gamma^{n}_{1}, u^{n}_{1}) \in \Gamma_{-d_{n}, 0}^{x \to x_{n}}$ and $(\gamma^{n}_{2}, u^{n}_{2}) \in \Gamma_{t_{n}, t_{n} + d_{n}}^{x_{n} \to x}$ are geodesic pairs, on their respective intervals. Hence, we have that $(\widetilde\gamma_{n}, \widetilde{u}_{n}) \in \Gamma_{-d_{n}, t_{n} + d_{n}}^{x \to x}$.
Let us estimate $|\gamma_{1}^{n}(\cdot)|$ (the same estimate can be obtained for $|\gamma_{2}^{n}(\cdot)|$). Since $\|u^{n}_{1}\|_{\infty, [-d_{n}, 0]} \leq 1$, 
we obtain 
\begin{equation*}
\int_{-d_{n}}^{0}{L(\gamma_{1}^{n}(s), u_{1}^{n}(s))\ ds} \leq d_{n}\|L(\cdot, \cdot)\|_{\infty, \T^d \times \overline{B}_{1}}.
\end{equation*}
Thus, we get
\begin{align*}
& \PB(x,x) \leq \liminf_{n \to \infty}\ [A_{t_{n} + 2d_{n}}(x,x) - \mane(t_{n} + 2d_{n})]
\\
\leq & \liminf_{n \to \infty} \Big( \int_{-d_{n}}^{0}{L(\gamma^{n}_{1}(s), u^{n}_{1}(s))\ ds} + \int_{0}^{t_{n}}{L(\gamma_{n}(s), u_{n}(s))\ ds} -\mane t_{n} 
\\
+ & \int_{t_{n}}^{t_{n} + d_{n}}{L(\gamma^{n}_{2}(s), u^{n}_{2}(s))\ ds} - 2\mane d_{n}\Big) \leq   \lim_{n \to \infty} \left(2d_{n}\|L(\cdot, \cdot)\|_{\infty, \T^d \times \overline{B}_{1}} + \frac{1}{n}\right)= 0.
\end{align*}
The proof is thus complete since, by definition, $\PB(x,x) \geq 0$ for any $x \in \T^{d}$. \qed

	\subsection{Horizontal regularity of critical solutions}
	\label{sec:horizontal}

	In this section we show that any fixed-point of the Lax-Oleinik semigroup is differentiable along the range of $F$ (see the definition below) at any point lying on the projected Aubry set.

\begin{definition}[{\bf Horizontal differentiability}]\label{def:horizontaldifferentiability}
We say that a continuous function $\psi$ on $\T^{d}$ is differentiable at $x \in \T^{d}$ along the range of $F(x)$ (or, horizontally differentiable at $x$) if there exists $q_{x} \in \R^{m}$ such that
\begin{equation}\label{eq:diffF}
\lim_{v \to 0} \frac{\psi(x + F(x)v) - \psi(x) - \langle q_{x}, v \rangle}{|v|} = 0.	
\end{equation}
\end{definition}

	Clearly, if $\psi$ is Frech\'et differentiable at $x$, than $\psi$ is differentiable along the range of $F(x)$ and $q_{x}=F^{\star}(x)D\psi(x)$. For any $\psi \in C(\T^{d})$ we set $D^{+}_{F}\psi(x)=F^{*}(x)D^{+}\psi(x)$.
	
	\begin{lemma}\label{lem:directional}
	Let $\psi \in C(\T^{d})$ be locally semiconcave. Then, $\psi$ is differentiable at $x \in \T^{d}$ along the range of $F(x)$ if and only if $D^{+}_{F}\psi(x) = \{ q_{x}\}$.   	
	\end{lemma}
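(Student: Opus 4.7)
\textbf{Plan of proof for \Cref{lem:directional}.}

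The strategy is standard for semiconcave functions: use the fact that, by local semiconcavity, $D^{+}\psi(x)$ is nonempty, compact, convex, and upper semicontinuous in $x$, and that the basic inequality $\psi(y)\le \psi(x)+\langle p, y-x\rangle + C|y-x|^{2}$ holds for every $p\in D^{+}\psi(x)$ locally. I split the argument into the two implications.

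\textbf{Step 1 ($\Rightarrow$).} Suppose $\psi$ is horizontally differentiable at $x$ with gradient $q_{x}\in\R^{m}$. Pick any $p\in D^{+}\psi(x)$; by definition of the superdifferential,
\begin{equation*}
\psi(x+v)-\psi(x)-\langle p, v\rangle\ \le\ o(|v|)\qquad (v\to 0).
\end{equation*}
Substituting $v=F(x)w$ and subtracting the horizontal Taylor expansion, I get $\langle q_{x}-F^{*}(x)p, w\rangle\le o(|w|)$ as $w\to 0$. Testing this bound against $w=\pm\eps(q_{x}-F^{*}(x)p)$ and letting $\eps\downarrow 0$ forces $F^{*}(x)p=q_{x}$. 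Since $p$ was arbitrary and $D^{+}\psi(x)\neq\emptyset$ by semiconcavity, $D^{+}_{F}\psi(x)=\{q_{x}\}$.

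\textbf{Step 2 ($\Leftarrow$).} Assume $D^{+}_{F}\psi(x)=\{q_{x}\}$ and set $g(w):=\psi(x+F(x)w)-\langle q_{x},w\rangle$. For the upper bound, the semiconcavity estimate applied at $x$ with any $p\in D^{+}\psi(x)$, combined with $F^{*}(x)p=q_{x}$, yields
\begin{equation*}
g(w)-g(0)\ \le\ C|F(x)w|^{2}\ \le\ C'|w|^{2}.
\end{equation*}
For the lower bound I argue by contradiction. If $\liminf_{w\to 0}(g(w)-g(0))/|w|<0$, there exist $\delta>0$ and $w_{n}\to 0$ with $g(w_{n})-g(0)\le -\delta|w_{n}|$. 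Setting $y_{n}=x+F(x)w_{n}$ and picking $p_{n}\in D^{+}\psi(y_{n})$ (nonempty by semiconcavity), the semiconcavity estimate at $y_{n}$, evaluated at $x$, gives
\begin{equation*}
\langle F^{*}(x)p_{n}-q_{x},\, w_{n}\rangle\ \le\ g(w_{n})-g(0)+C|w_{n}|^{2}\ \le\ -\delta|w_{n}|+C|w_{n}|^{2}.
\end{equation*}
Dividing by $|w_{n}|$ and passing to subsequences, the local boundedness of $D^{+}\psi$ allows $p_{n}\to p$, and the unit directions $w_{n}/|w_{n}|\to \hat w$ with $|\hat w|=1$. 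Upper semicontinuity of $D^{+}\psi$ (again a consequence of semiconcavity) guarantees $p\in D^{+}\psi(x)$, hence $F^{*}(x)p\in D^{+}_{F}\psi(x)=\{q_{x}\}$. In the limit the inequality reads $0=\langle F^{*}(x)p-q_{x},\hat w\rangle\le -\delta<0$, a contradiction.

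\textbf{Main obstacle.} The substantive step is the lower-bound argument: I need enough compactness to extract a limiting pair $(p,\hat w)$, together with upper semicontinuity of $D^{+}\psi$ to conclude $p\in D^{+}\psi(x)$. Both ingredients rest solely on local semiconcavity, so once they are invoked the rest is just algebra. Combining the two bounds gives $g(w)-g(0)=o(|w|)$, which is exactly \eqref{eq:diffF} with gradient $q_{x}$.
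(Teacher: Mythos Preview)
Your argument is correct. The paper states \Cref{lem:directional} without proof, so there is nothing to compare against; your two-step approach via the semiconcavity inequality, local boundedness, and upper semicontinuity of $D^{+}\psi$ is the standard route and all the ingredients you invoke are indeed consequences of local semiconcavity (see, e.g., \cite[Chapter~3]{bib:SC}).
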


\noindent Hereafter, the vector $q_{x}$ given in  \Cref{def:horizontaldifferentiability} will be called the {\it horizontal differential} of $\psi$ at $x \in \T^{d}$ and will be denoted by $D_{F}\psi(x)$.

	The next two propositions ensure that any fixed-point of $T_t$, denoted by $\chi$, is differentiable along the range of $F$ at any point lying on a calibrated curve $\gamma$. The proof consists of showing that $D^{+}_{F}\chi$ is a singleton on $\gamma$.  
%	We recall that 
%	\begin{equation*}
%	L^{*}(x,p)=\sup_{v \in \T^{d}} \big\{\langle p,v \rangle - L(x,v) \big\}	
%	\end{equation*}
%is the Legendre transform of $L$. We will rather write the critical equation using $L^{*}$, instead of the Hamiltonian $H$, to underline the role of horizontal gradients.

\begin{proposition}\label{prop:superdiff}
Let $\chi$ be a subsolution to the critical equation and let $(\gamma, u)$ be a trajectory-control pair such that $\gamma: \R \to \T^{d}$ is calibrated for $\chi$. Then we have that
\begin{equation*}
\mane + L^{*}(\gamma(\tau),p)=0, \quad \forall\ p \in D^{+}_{F}\chi(\gamma(\tau))
\end{equation*}
for all $\tau > 0$. 	
\end{proposition}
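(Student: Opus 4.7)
The plan is to show the two inequalities $\mane + L^*(\gamma(\tau),p) \le 0$ and $\mane + L^*(\gamma(\tau),p) \ge 0$ separately, using the subsolution property for the first and calibration for the second, and to connect everything via the key identity \eqref{eq:starhj}: $H(x,q) = L^*(x, F^*(x)q)$.

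For the upper bound, fix $q \in D^+\chi(\gamma(\tau))$ so that $p = F^*(\gamma(\tau))q$. Since $\chi$ is a viscosity subsolution and $\chi$ is Lipschitz (by \Cref{lem:equilip}), the standard test-function characterization upgrades to $\mane + H(\gamma(\tau), q) \le 0$ for every element of the superdifferential. Rewriting $H$ through \eqref{eq:starhj} immediately gives $\mane + L^*(\gamma(\tau), p) \le 0$.

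For the lower bound I combine the calibration identity with the superdifferential inequality. From \Cref{def:domcal} applied on $[\tau, \tau+h]$,
\[
\chi(\gamma(\tau+h)) - \chi(\gamma(\tau)) = \int_{\tau}^{\tau+h} L(\gamma(s), u(s))\, ds - \mane h,
\]
while $q \in D^+\chi(\gamma(\tau))$ gives $\chi(\gamma(\tau+h)) - \chi(\gamma(\tau)) \le \langle q, \gamma(\tau+h) - \gamma(\tau)\rangle + o(|\gamma(\tau+h)-\gamma(\tau)|)$. Using $\gamma(\tau+h) - \gamma(\tau) = \int_{\tau}^{\tau+h} F(\gamma(s))u(s)\, ds$ and then Fenchel--Young, $L(\gamma(s), u(s)) - \langle F^*(\gamma(s))q, u(s)\rangle \ge -L^*(\gamma(s), F^*(\gamma(s))q) = -H(\gamma(s), q)$, I obtain
\[
-\int_{\tau}^{\tau+h} H(\gamma(s), q)\, ds \le \mane h + o(|\gamma(\tau+h)-\gamma(\tau)|).
\]
Dividing by $h$ and sending $h \to 0^+$, the left-hand side tends to $-H(\gamma(\tau), q)$ by continuity of $s \mapsto H(\gamma(s),q)$ (using the regularity of $F$, $\gamma$ and $L$). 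Provided $|\gamma(\tau+h) - \gamma(\tau)|/h$ is bounded as $h \to 0^+$, the error term vanishes and I conclude $\mane + H(\gamma(\tau), q) \ge 0$, i.e. $\mane + L^*(\gamma(\tau), p) \ge 0$.

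The main obstacle is the boundedness of $|\gamma(\tau+h)-\gamma(\tau)|/h$, since a priori $u \in L^{\sigma}$ only yields $|\gamma(\tau+h)-\gamma(\tau)| = O(h^{1-1/\sigma})$, which is weaker than $O(h)$. I would resolve this by a bootstrap on calibrated curves: combining the calibration identity with the sub-Riemannian Lipschitz bound of \Cref{lem:equilip}, I get $\int_\tau^{\tau+h} L(\gamma, u)\, ds \le \mane h + C_L \int_\tau^{\tau+h}|u|\, ds$; inserting the coercivity \eqref{eq:L0} $L(x,u) \ge K_1 |u|^\sigma - K_2$ and applying Young's inequality with conjugate exponents $\sigma, \sigma'$ to absorb $\int|u|\,ds$ into $\int|u|^\sigma\,ds$, one extracts $\int_\tau^{\tau+h} |u(s)|^\sigma\, ds \le C\, h$ with $C$ locally uniform. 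H\"older's inequality then yields $\int_\tau^{\tau+h} |u(s)|\, ds \le C' h$, and since $F$ is bounded, $|\gamma(\tau+h)-\gamma(\tau)| \le C'' h$. This uniform Lipschitz estimate of $\gamma$ in $t$, valid on all of $\R$, makes the limit argument go through at every $\tau > 0$ and not just at Lebesgue points of $u$, giving the required identity.
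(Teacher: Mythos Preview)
Your proof is correct and follows essentially the same route as the paper: the inequality $\le 0$ comes from the subsolution property via \eqref{eq:starhj}, and $\ge 0$ by combining the calibration identity with the superdifferential bound and Fenchel--Young, then averaging and passing to the limit. The only differences are cosmetic (you work on $[\tau,\tau+h]$ instead of $[\tau-h,\tau]$) and that you supply an explicit bootstrap to justify $|\gamma(\tau+h)-\gamma(\tau)|=O(h)$, a point the paper writes directly as $o(h)$ without further comment.
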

\proof
On the one hand, since $\chi$ is a subsolution of \eqref{eq:HJcorrector} we have that for any $\tau \geq 0$
\begin{equation*}
\mane + H(\gamma(\tau), p) \leq 0, \quad 	\forall\ p \in D^{+}\chi(\gamma(\tau)). 
\end{equation*}
So, recalling \eqref{eq:starhj} and the fact that $D^{+}_{F}\chi(x)=F^{*}(x)D^{+}\chi(x)$ for any $x \in \T^{d}$ we get
\begin{equation*}
\mane + L^{*}(\gamma(\tau), p) \leq 0, \quad 	\forall\ p \in D^{+}_{F}\chi(\gamma(\tau)) \,\, \forall \tau \geq 0.
\end{equation*} 
Thus, it is enough to prove the reverse inequality. 
	
	Let $\tau > 0$ and $\tau \geq h > 0$. Since $\gamma$ is a calibrated curve for $\chi$ we have that
	\begin{align*}
	& \chi(\gamma(\tau)) - \chi(\gamma(\tau-h)) = \int_{\tau-h}^{\tau}{L(\gamma(s), u(s))\ ds} - \mane h.
	\end{align*}
Then, by the definition of superdifferential we get
\begin{align*}
& \chi(\gamma(\tau)) - \chi(\gamma(\tau-h)) \leq\ \langle p, \gamma(\tau)-\gamma(\tau-h) \rangle + o(h)  
\\
=\ & \left\langle p, \int_{\tau-h}^{\tau}{\dot\gamma(s)\ ds} \right\rangle + o(h)  
 =\ \int_{\tau-h}^{\tau}{ \langle F^{*}(\gamma(s)) p, u(s) \rangle\ ds} + o(h)	
\end{align*}
Therefore,
\begin{align*}
	\int_{\tau-h}^{\tau}{L(\gamma(s), u(s))\ ds} - \mane h \leq \int_{\tau-h}^{\tau}{ \langle F^{*}(\gamma(s)) p, u(s) \rangle\ ds} + o(h)
\end{align*}
or
\begin{align*}
- \mane \leq\ & \frac{1}{h} \int_{\tau-h}^{\tau}{\Big(\langle F^{*}(\gamma(s))p, u(s) \rangle - L(\gamma(s), u(s))\Big)\ ds} + o(1)
\\
\leq\ & \frac{1}{h}\int_{\tau-h}^{\tau}{L^{*}(\gamma(s), F^{*}(\gamma(s))p)\ ds} + o(1).
\end{align*}
Thus, for $h \to 0$ we conclude that
\begin{equation*}
\mane + L^{*}(\gamma(\tau), F^{*}(\gamma(\tau))p) \geq 0. \eqno\square
\end{equation*}

\begin{proposition}\label{prop:differentiability}
Let $\chi$ be a fixed-point of the Lax-Oleinik semigroup and let $(\gamma, u)$ be a trajectory-control pair such that $\gamma: (a,b) \to \T^{d}$ is calibrated for $\chi$, where $-\infty\leq a<b\leq+\infty$. Then, for any $\tau \in(a,b)$ we have that $\chi$ is differentiable at $\gamma(\tau)$ along the range of $F(\gamma(\tau))$. 
\end{proposition}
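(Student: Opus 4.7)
Our plan is to reduce the proposition to showing that the horizontal superdifferential $D^+_F\chi(\gamma(\tau))=F^*(\gamma(\tau))D^+\chi(\gamma(\tau))$ collapses to a single point for each $\tau\in(a,b)$, which by \Cref{lem:directional} is equivalent to horizontal differentiability since $\chi$ is locally semiconcave by \Cref{lem:semiconcavity}. Fix such a $\tau$, an arbitrary $q\in D^+\chi(\gamma(\tau))$, and set $p=F^*(\gamma(\tau))q$. By \Cref{prop:superdiff} we already know $\mane+L^*(\gamma(\tau),p)=0$, so the goal becomes to identify $p$ as $D_uL(\gamma(\tau),u(\tau))$; since the right-hand side is independent of $q$, the singleton property will follow.

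We will produce the core identity at any Lebesgue point $\tau$ of both $u$ and $L(\gamma(\cdot),u(\cdot))$. At such a point, $\gamma$ is differentiable with $\dot\gamma(\tau)=F(\gamma(\tau))u(\tau)$, hence $|\gamma(\tau+h)-\gamma(\tau)|^2=O(h^2)=o(h)$ and
\[
\frac{\gamma(\tau+h)-\gamma(\tau)}{h}\longrightarrow F(\gamma(\tau))u(\tau),\qquad \frac{1}{h}\int_\tau^{\tau+h}L(\gamma(s),u(s))\,ds\longrightarrow L(\gamma(\tau),u(\tau))
\]
as $h\to 0^+$. Dividing the calibration identity
\[
\chi(\gamma(\tau+h))-\chi(\gamma(\tau))=\int_\tau^{\tau+h}L(\gamma(s),u(s))\,ds-\mane h
\]
by $h$ and comparing it with the semiconcavity inequality
\[
\chi(\gamma(\tau+h))-\chi(\gamma(\tau))\le\langle q,\gamma(\tau+h)-\gamma(\tau)\rangle+C|\gamma(\tau+h)-\gamma(\tau)|^2,
\]
we pass to the limit to obtain $L(\gamma(\tau),u(\tau))-\mane\le\langle p,u(\tau)\rangle$. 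The pointwise Fenchel--Young inequality provides the matching upper bound $\langle p,u(\tau)\rangle\le L^*(\gamma(\tau),p)+L(\gamma(\tau),u(\tau))=-\mane+L(\gamma(\tau),u(\tau))$, forcing equality. Hence $u(\tau)$ realizes the supremum defining $L^*(\gamma(\tau),p)$, and the first-order optimality condition coming from the $C^2$ convexity of $L$ in $u$ gives $p=D_uL(\gamma(\tau),u(\tau))$, settling the singleton property at every Lebesgue point of $u$.

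The hardest step will be to upgrade this from almost every $\tau$ to every $\tau\in(a,b)$, since a priori $u$ is only $L^\sigma_{\mathrm{loc}}$. Here we plan to exploit assumption {\bf (S)}: calibration of $\gamma$ on $(a,b)$ makes it an action-minimizer between any two times, and the absence of nonzero singular minimizing controls allows us to invoke the Pontryagin maximum principle in normal form on the whole interval. The resulting continuous dual arc $p(\cdot)$, together with the strict convexity of $L(x,\cdot)$ implied by $C^2$ regularity and superlinear coercivity from {\bf (L)}, yields a continuous representation of $u$ on $(a,b)$. Once $u$ is continuous, every $\tau\in(a,b)$ is a Lebesgue point and the previous argument delivers horizontal differentiability throughout $(a,b)$. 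A subtlety to keep in mind is that {\bf (L)} only asserts $D^2_uL\ge 0$; the strict convexity needed to safely invert $D_uL(\gamma(\tau),\cdot)$ along the dual arc will have to be recovered from $C^2$ regularity combined with the superlinear coercivity $L\ge K_1|u|^\sigma-K_2$.
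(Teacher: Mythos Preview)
Your plan is correct at Lebesgue points but takes an unnecessary detour, and the final extension step has a gap you yourself flag without resolving. You already invoke \Cref{prop:superdiff} to obtain $\mane + L^{*}(\gamma(\tau),p) = 0$ for every $p \in D^{+}_{F}\chi(\gamma(\tau))$, and this holds at \emph{every} $\tau \in (a,b)$. The paper stops right there: $D^{+}_{F}\chi(\gamma(\tau)) = F^{*}(\gamma(\tau))D^{+}\chi(\gamma(\tau))$ is convex (linear image of a convex set), while $L^{*}(\gamma(\tau),\cdot)$ is strictly convex---this follows from $L(\gamma(\tau),\cdot)$ being $C^{1}$ and superlinear, both contained in {\bf (L)}. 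A strictly convex function cannot be constant on a nondegenerate segment, so $D^{+}_{F}\chi(\gamma(\tau))$ is a singleton and \Cref{lem:directional} concludes. No reference to $u(\tau)$, Lebesgue points, the maximum principle, or assumption {\bf (S)} is needed.

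Your route through $p = D_{u}L(\gamma(\tau),u(\tau))$ reaches the same conclusion at Lebesgue points of $u$, but the upgrade to all $\tau$ is where the trouble lies. To extract a continuous representative of $u$ from the dual arc you need $D_{u}L(\gamma(s),\cdot)$ to be injective, i.e.\ $L(\gamma(s),\cdot)$ strictly convex; assumption {\bf (L)} only gives $D^{2}_{u}L \ge 0$, and superlinear coercivity does not force strict convexity (take $L$ affine in $u$ on a bounded region and superlinear outside). So the extension step, as planned, cannot be completed from the stated hypotheses. The paper sidesteps this entirely by exploiting strict convexity of $L^{*}$, which \emph{does} follow from {\bf (L)}, rather than strict convexity of $L$, which does not.
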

\begin{proof}
We recall that, owing to \Cref{lem:semiconcavity}, $\chi$ is semiconcave. 
	Moreover, \Cref{prop:superdiff} yields
	\begin{equation*}
	\mane + L^{*}(\gamma(\tau), p) = 0, \quad \forall p \in D^{+}_{F}(\chi(\gamma(\tau)), \,\, \forall \tau \in(a,b).	
	\end{equation*}
Since $L^{*}(x, \cdot)$ is strictly convex and the set $D^{+}_{F}\chi(x)$ is convex,  the above equality implies that $D^{+}_{F}\chi(\gamma(\tau))$ is a singleton. Consequently, \Cref{lem:directional} ensures that  $\chi$ is differentiable at $\gamma(\tau)$ along the range of $F(\gamma(\tau))$. 
\end{proof}

We are now ready to prove the differentiability of any fixed-point of $T_t$ on the Aubry set.

		\begin{theorem}[{\bf Horizontal differentiability on the Aubry set}]\label{thm:aubryset}
		Let $\chi$ be a fixed-point of $T_t$. Then, the following holds. 
		\begin{itemize}
\item[($i$)] For any $x \in \A$ there exists $\gamma_{x}: \R \to \T^{d}$, calibrated for $\chi$, with $\gamma_{x}(0)=x$.
\item[($ii$)] $\chi$ is horizontally differentiable at every $x \in \A$.
		\end{itemize}
\end{theorem}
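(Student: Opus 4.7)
The plan is to derive both parts from \Cref{prop:aubrycal}(ii) combined with \Cref{prop:differentiability}; no separate compactness or limiting argument is needed beyond what is already packaged in those statements.

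For (i), fix $x \in \A$ and let $(\gamma_x, u_x) \in \Gamma_{-\infty,0}^{\to x} \cap \Gamma_{0,\infty}^{x \to}$ be the trajectory-control pair supplied by \Cref{prop:aubrycal}(ii), so that $\gamma_x(0)=x$ and the two-sided identities \eqref{eq:rel1}--\eqref{eq:rel2} hold. I would show that $\gamma_x$ is calibrated for $\chi$ on every bounded subinterval of $\R$. Since $\chi$ is a fixed-point of $T_t$, the minimization formulation \eqref{eq:corrector2} together with the definition of $A_t$ shows $\chi \prec L - \mane$; in particular, for every $a < b$,
\begin{equation*}
\chi(\gamma_x(b)) - \chi(\gamma_x(a)) \le \int_a^b L(\gamma_x(s), u_x(s))\, ds - \mane(b-a).
\end{equation*}
For the reverse inequality I would pass to the liminf $t \to \infty$ in \eqref{eq:subsol} to obtain the Peierls comparison $\chi(y) - \chi(z) \le \PB(z, y)$ for all $y, z \in \T^d$. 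Applying this with $(y, z) = (\gamma_x(-T), x)$ and using \eqref{eq:rel2},
\begin{equation*}
\chi(\gamma_x(-T)) - \chi(x) \le \PB(x, \gamma_x(-T)) = -\int_{-T}^0 L(\gamma_x, u_x)\, ds + \mane T,
\end{equation*}
which, combined with the domination inequality above, forces equality on $[-T, 0]$. Symmetrically, applying the Peierls comparison with $(y, z) = (x, \gamma_x(t))$ together with \eqref{eq:rel1} gives equality on $[0, t]$. By additivity over adjacent intervals this yields calibration of $\chi$ along $\gamma_x$ on every $[a, b] \subset \R$, proving (i).

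For (ii), once $\gamma_x$ is known to be calibrated for $\chi$ on all of $\R$, I would simply invoke \Cref{prop:differentiability} with the open interval $(a, b) = \R$ at $\tau = 0$ to conclude that $\chi$ is horizontally differentiable at $\gamma_x(0) = x$. Since $x \in \A$ was arbitrary, this proves the horizontal differentiability of $\chi$ everywhere on $\A$.

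The only real subtlety is the step that upgrades the $\PB$-level calibration encoded in the definition of the Aubry set to a true calibration for the arbitrary fixed-point $\chi$. This is exactly what the Peierls comparison $\chi(y) - \chi(z) \le \PB(z, y)$, coupled with the two-sided identities for $\PB(\cdot, x)$ and $\PB(x, \cdot)$ at a point of $\A$, delivers with no further work.
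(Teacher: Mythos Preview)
Your proof is correct and follows essentially the same approach as the paper: both obtain the two-sided calibrated curve from \Cref{prop:aubrycal}, use the domination $\chi \prec L-\mane$ for one inequality and the Peierls comparison $\chi(y)-\chi(z)\le \PB(z,y)$ (which the paper derives equivalently from \eqref{eq:corrector2}) together with \eqref{eq:rel1}--\eqref{eq:rel2} for the reverse, then conclude (ii) via \Cref{prop:differentiability}. The only cosmetic difference is that the paper works out the forward and backward halves in the specific order $[0,t]$ then $(-\infty,0]$ and explicitly writes the additivity computation, whereas you state the Peierls comparison as a general lemma first and then specialize.
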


\begin{proof}

We begin by proving ($i$). Let $x \in \A$ and let $(\gamma_{x}, u_{x}) \in \Gamma_{-\infty,0}^{\to x} \cap \Gamma_{0,\infty}^{x \to}$ be the trajectory-control pair given by \Cref{prop:aubrycal}. We claim that $\gamma_{x}$ is calibrated for $\chi$. By \eqref{eq:corrector2} we have that $\chi \prec L - \mane$ and, for any $t \geq 0$, the following holds
\begin{equation*}
\chi(\gamma_{x}(t)) - \chi(x) \leq \int_{0}^{t}{L(\gamma_{x}(s), u_{x}(s))\ ds} - \mane t.	
\end{equation*}
Moreover, in view of \eqref{eq:corrector2} we deduce that 
\begin{equation*}
	\chi(x) - \chi(\gamma_{x}(t)) \leq A_{s}(\gamma_{x}(t), x) - \mane s 
\end{equation*}
for any $s \geq 0$. Thus, since $\gamma_{x}$ is calibrated for $\PB(\cdot, x)$ we get
\begin{equation*}
\chi(x) - \chi(\gamma_{x}(t)) \leq \PB(\gamma_{x}(t), x) = - \int_{0}^{t}{L(\gamma_{x}(s), u_{x}(s))\ ds} + \mane t. 
\end{equation*}
This proves that $\gamma_{x}$ is a calibrated curve for $\chi$ on $[0,\infty)$. Similarly, one can prove that the same holds on $(-\infty, 0]$. Moreover, for any $s$, $t > 0$ we have that
\begin{multline*}
\chi(\gamma_{x}(t))- \chi(\gamma_{x}(-s)) =\ \chi(\gamma_{x}(t)) - \chi(x) + \chi(x) - \chi(\gamma_{x}(-s))
\\
=\  \int_{0}^{t}{L(\gamma_{x}(\tau), u_{x}(\tau))\ d\tau} - \mane t + \int_{-s}^{0}{L(\gamma_{x}(\tau), u_{x}(\tau))\ d\tau} - \mane s
\\
=\  \int_{-s}^{t}{L(\gamma_{x}(\tau), u_{x}(\tau))\ d\tau} - \mane (t+s),
 \end{multline*}
and this completes the proof of ($i$). 
Conclusion ($ii$) follows from  ($i$) and \Cref{prop:differentiability}. 
\end{proof}

\begin{corollary}\label{cor:feedback}
Let $\chi$ be a fixed-point of $T_t$, let $x \in \A$, and let $\gamma_{x}$ be calibrated for $\chi$ on $\R$ with $\gamma_{x}(0)=x$. Then, $\gamma_{x}$ satisfies the state equation with control 
\begin{equation*}
u_{x}(t)=D_{p}L^{*}(\gamma_{x}(t), D_{F}\chi(\gamma_{x}(t))), \quad \forall t \in\R.
\end{equation*}
Moreover, 
\begin{equation*}
D_{F}\chi(\gamma_{x}(t))=D_{u}L(\gamma_{x}(t), u_{x}(t)), \quad \forall t \in \R. 
\end{equation*}
\end{corollary}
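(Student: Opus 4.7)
The plan is to differentiate the calibration identity for $\chi$ along $\gamma_x$ and recognize the resulting pointwise relation as the equality case of the Fenchel--Young inequality associated with $L$ and $L^*$. Concretely, by \Cref{thm:aubryset}(i),
\begin{equation*}
\chi(\gamma_x(\tau+h))-\chi(\gamma_x(\tau))=\int_{\tau}^{\tau+h}L(\gamma_x(s),u_x(s))\,ds-\mane\, h,
\end{equation*}
so at every Lebesgue point $\tau$ of $u_x$ and of $s\mapsto L(\gamma_x(s),u_x(s))$, dividing by $h$ and sending $h\downarrow 0$ produces the limit $L(\gamma_x(\tau),u_x(\tau))-\mane$ on the right-hand side.

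For the left-hand side I would expand $\gamma_x(\tau+h)=\gamma_x(\tau)+F(\gamma_x(\tau))(h u_x(\tau))+o(h)$, using the Lebesgue-point property together with the continuity of $F$ and $\gamma_x$. Since $\chi$ is horizontally differentiable at $\gamma_x(\tau)$ by \Cref{thm:aubryset}(ii) and locally semiconcave, hence locally Lipschitz, by \Cref{lem:semiconcavity}, the off-horizontal remainder is absorbed into the Lipschitz constant, and one obtains
\begin{equation*}
\chi(\gamma_x(\tau+h))-\chi(\gamma_x(\tau))=h\,\langle D_F\chi(\gamma_x(\tau)),u_x(\tau)\rangle+o(h).
\end{equation*}
Matching this with the previous limit yields $\langle D_F\chi(\gamma_x(\tau)),u_x(\tau)\rangle=L(\gamma_x(\tau),u_x(\tau))-\mane$.

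Substituting $\mane=-L^*(\gamma_x(\tau),D_F\chi(\gamma_x(\tau)))$ from \Cref{prop:superdiff} converts this into the Fenchel--Young equality
\begin{equation*}
\langle D_F\chi(\gamma_x(\tau)),u_x(\tau)\rangle=L(\gamma_x(\tau),u_x(\tau))+L^*(\gamma_x(\tau),D_F\chi(\gamma_x(\tau))).
\end{equation*}
The strict convexity of $L^*(x,\cdot)$ invoked in \Cref{prop:differentiability}, together with the $C^2$ regularity of $L$, forces the two equivalent feedback identities claimed, first for a.e.\ $\tau\in\R$ and then for every $\tau$ by continuity of the right-hand sides (taking the continuous representative of $u_x$); the first identity immediately yields the state equation in feedback form.

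The main obstacle is the second paragraph: \Cref{def:horizontaldifferentiability} only tests $\chi$ along the exact horizontal rays $v\mapsto\chi(y+F(y)v)$, whereas $\gamma_x(\tau+h)$ deviates from such a ray by an $o(h)$ amount normal to $F(\gamma_x(\tau))\R^m$. The semiconcavity of $\chi$, and the local Lipschitz continuity it entails, is precisely what is needed to absorb this error and justify the limit, which is why \Cref{lem:semiconcavity} plays a structural role at this step.
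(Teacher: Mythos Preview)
Your argument is correct and reaches the same Fenchel--Young equality as the paper, but by a genuinely different route. The paper obtains
\[
\langle D_{F}\chi(\gamma_{x}(t)), u_{x}(t) \rangle = L(\gamma_{x}(t), u_{x}(t)) + L^{*}(\gamma_{x}(t), D_{F}\chi(\gamma_{x}(t)))
\]
by invoking the Pontryagin maximum principle together with the inclusion of the dual arc in the superdifferential of the value function (an external result, \cite[Theorem 7.4.17]{bib:SC}); from there the Legendre duality and the continuity of $t\mapsto D_{p}L^{*}(\gamma_{x}(t),D_{F}\chi(\gamma_{x}(t)))$ finish the proof exactly as you do. You instead differentiate the calibration identity directly along $\gamma_x$, which is more self-contained: no control-theoretic machinery is needed, only the horizontal differentiability from \Cref{prop:differentiability} and the Lipschitz bound from \Cref{lem:semiconcavity} to absorb the $o(h)$ deviation of $\gamma_x(\tau+h)$ from the exact horizontal ray. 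The trade-off is that your approach requires the careful remainder analysis you flag, whereas the paper delegates that work to the cited maximum-principle result. One small remark: when you cite \Cref{thm:aubryset}(ii) for horizontal differentiability at $\gamma_x(\tau)$, you are implicitly using $\gamma_x(\tau)\in\A$ from \eqref{eq:inclusion}; citing \Cref{prop:differentiability} directly would be cleaner since it applies along any calibrated curve.
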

\proof
Let $\chi$ be a fixed-point of $T_t$, let $x \in \A$ and let $\gamma_{x}$ be a calibrated curve for $\chi$. Let $u_{x}$ be the control associated with $\gamma_{x}$. Then, from the maximum principle and the inclusion of the dual arc into the superdifferential of the corresponding value function, e.g. \cite[Theorem 7.4.17]{bib:SC},  it follows that for a.e. $t\in\R$
\begin{equation*}
\langle D_{F}\chi(\gamma_{x}(t)), u_{x}(t) \rangle = L(\gamma_{x}(t), u_{x}(t)) + L^{*}(\gamma_{x}(t), D_{F}\chi(\gamma_{x}(t)).
\end{equation*}
Notice that, $D_{F}\chi(\gamma_{x}(t))$ exists by \Cref{prop:differentiability}.  Hence, by the properties of the Legendre transform and the fact that
$t\mapsto D_{p}L^{*}(\gamma_{x}(t), D_{F}\chi(\gamma_{x}(t)))$ is continuous on $\R$, 
we conclude that $u_{x}$ has a continuous extension to $\R$ given by 
$
u_{x}(t)=D_{p}L^{*}(\gamma_{x}(t), D_{F}\chi(\gamma_{x}(t)))
$.
\qed

\begin{remark}\em
Following the classical Aubry-Mather theory for Tonelli Hamiltonian systems, one could  define the Aubry set $\widetilde\A \subset \T^{d} \times \R^{m}$ as 
\begin{equation*}
\widetilde\A = \bigcap\{(x,u) \in \A \times \R^{m}: D_{F}\chi(x)=D_{u}L(x,u)\}
\end{equation*}
where the intersection is taken over all fixed-point $\chi$ of the Lax-Oleinik semigroup. 
\end{remark}

\section{Minimizing measures and critical constant}\label{Minimizing}

\subsection{Horizontally closed measures}
 We begin by introducing a class of probability measures that adapts the notion of closed measures to sub-Riemannian control systems.
Set
\begin{equation*}
\PP^{\sigma}(\T^{d} \times \R^{m}) = \left\{\mu \in \PP(\T^{d} \times \R^{m}) : \int_{\T^{d} \times \R^{m}}{|u|^{\sigma}\ \mu(dx,du)} < \infty \right\}
\end{equation*}
and for $\kappa > 0$ let $\PP^{\sigma}_{\kappa}(\T^d \times \R^m)$ be the subset of $\PP^\sigma(\T^d \times \R^m)$ such that $\mu \in \PP^{\sigma}_{\kappa}(\T^d \times \R^m)$ if 
\begin{equation*}
 \int_{\T^{d} \times \R^{m}}{|u|^{\sigma}\ \mu(dx,du)} \leq \kappa.
\end{equation*}
Note that, since $\PP_{\kappa}^{\sigma}(\T^{d} \times \R^{m})$ is closed w.r.t. distance $d_{1}$ by Prokhorov's Theorem $\PP_{\kappa}^{\sigma}(\T^{d} \times \R^{m})$ is compact w.r.t. distance $d_{1}$ for any $\kappa > 0$. 

%Recall that $F(x)=[f_{1}(x)| \dots | f_{m}(x)]$ is the real $d \times m$ matrix introduced in \eqref{eq:dynamics}.

\begin{definition}[{\bf $F$-closed measure}]\label{def:Closedmeasure}
We say that $\mu \in \PP^{\sigma}(\T^{d} \times \R^{m})$ is an $F$-closed measure if
	\begin{equation*}
	\int_{\T^{d} \times \R^{m}}{\langle F^{\star}(x)D\varphi(x), u \rangle\ \mu(dx,du)}=0, \quad \forall\ \varphi \in C^{1}(\T^{d}). 	
	\end{equation*}
	We denote by $\C_{F}$ the set of all $F$-closed measures and we set $\C^{\kappa}_{F} = \C_F \cap \PP_{\kappa}^{\sigma}(\T^d \times \R^m)$.
\end{definition}

 Closed measures were first introduced in \cite{Mane1, Mane2}  and then used, for instance, in \cite{bib:FAS} in order to overcome the lack of regularity of the Lagrangian. Indeed, if $L$ is merely continuous, then there is no Euler-Lagrange flow and, consequently, it is not possible to introduce invariant measures as it is customary, see for instance \cite{bib:FA}. Similarly, in our setting, such a flow does not exist: for this reason, the use of closed measures turns out to be necessary. Moreover, as we will show in the next result, such measures collect the behavior of the minimizing trajectories for the infimum in \eqref{eq:evoValue} as $T \to \infty$.

We now proceed to construct one closed measure that will be particularly useful to study the Aubry set. Given $x_{0} \in \T^{d}$, for any $T > 0$ let the pair $(\gamma_{x_{0}}, u_{x_{0}}) \in \Gamma_{0,T}^{x_{0} \to}$ be optimal for \eqref{eq:evoValue}. Define the probability measure $\mu^{T}_{x_{0}}$ by 
\begin{equation}\label{eq:uniformmeasure}
\int_{\T^{d} \times \R^{m}}{\varphi(x,u)\ \mu^{T}_{x_{0}}(dx,du)}= \frac{1}{T}\int_{0}^{T}{\varphi(\gamma_{x_{0}}(t), u_{x_{0}}(t))\ dt}, \quad \forall\ \varphi \in C_{b}(\T^{d} \times \R^{m}).
\end{equation}
Then, we have the following.  
 
\begin{proposition}\label{prop:nonempty}
The sequence $\{ \mu^{T}_{x_{0}}\}_{T>0}$ is tight and there exists a sequence $T_{n} \to \infty$ such that $\mu^{T_{n}}_{x_{0}}$ weakly-$^*$ converges to an $F$-closed measure $\mu^{\infty}_{x_{0}}$.  
\end{proposition}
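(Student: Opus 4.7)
The plan is to (a) establish a uniform $\sigma$-moment bound on $\{\mu^T_{x_0}\}_{T>0}$, whose compactness consequence in $\PP^\sigma_\kappa$ gives both tightness and a convergent subsequence, and (b) verify $F$-closedness of the limit by a fundamental-theorem-of-calculus identity along the optimal trajectory. For (a), observe that $u \equiv 0 \in \Gamma_{0,T}^{x_0 \to}$ yields the constant trajectory $\gamma \equiv x_0$, so by continuity of $L$ and compactness of $\T^d$,
\[
V^T(x_0) \leq T L(x_0, 0) \leq T \|L(\cdot, 0)\|_{\infty, \T^d} =: C_0 T.
\]
Combined with coercivity {\bf (L)},
\[
K_1 \int_0^T |u_{x_0}(s)|^\sigma\, ds - K_2 T \leq V^T(x_0),
\]
so, after dividing by $T$,
\[
\int_{\T^d \times \R^m} |u|^\sigma\, d\mu^T_{x_0} \leq \frac{C_0 + K_2}{K_1} =: \kappa
\]
uniformly in $T$. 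Thus $\{\mu^T_{x_0}\}_{T>0} \subset \PP^\sigma_\kappa(\T^d \times \R^m)$, which is compact for $d_1$, so we may extract $T_n \uparrow \infty$ and $\mu^\infty_{x_0} \in \PP^\sigma_\kappa$ with $\mu^{T_n}_{x_0} \to \mu^\infty_{x_0}$.

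For (b), fix $\varphi \in C^1(\T^d)$ and use the state equation $F(\gamma_{x_0})u_{x_0} = \dot\gamma_{x_0}$ to write
\[
\int \langle F^*(x) D\varphi(x), u\rangle\, d\mu^T_{x_0} = \frac{1}{T}\int_0^T \frac{d}{ds}\varphi(\gamma_{x_0}(s))\, ds = \frac{\varphi(\gamma_{x_0}(T)) - \varphi(x_0)}{T},
\]
which tends to $0$ as $T \to \infty$ by boundedness of $\varphi$ on $\T^d$. The main obstacle is that the integrand $\psi(x,u) := \langle F^*(x) D\varphi(x), u\rangle$ grows linearly in $u$, so the convergence $\mu^{T_n}_{x_0} \to \mu^\infty_{x_0}$ does not transfer to $\int \psi\, d\mu^{T_n}_{x_0}$ a priori. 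I would resolve this by truncation: picking $\chi_R \in C_c(\R^m; [0,1])$ with $\chi_R \equiv 1$ on $\overline{B}_R$, the product $\psi\chi_R$ is bounded continuous and hence its integral converges; the tail is controlled uniformly by the $\sigma$-moment bound,
\[
\left|\int \psi (1 - \chi_R)\, d\mu^{T_n}_{x_0}\right| \leq \|F^* D\varphi\|_{\infty, \T^d} \int_{|u|>R} |u|\, d\mu^{T_n}_{x_0} \leq \|F^* D\varphi\|_{\infty, \T^d}\, \kappa\, R^{-(\sigma - 1)},
\]
using $|u| \leq R^{-(\sigma-1)}|u|^\sigma$ on $\{|u| > R\}$, and likewise for $\mu^\infty_{x_0}$ by lower semicontinuity of the $\sigma$-moment. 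Sending first $n \to \infty$ and then $R \to \infty$, using $\sigma > 1$, yields $\int \psi\, d\mu^\infty_{x_0} = 0$, establishing $F$-closedness.
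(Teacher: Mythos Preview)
Your argument is correct and follows the same line as the paper's proof: the uniform $\sigma$-moment bound via comparison with the zero control and coercivity, tightness/compactness to extract a subsequence, and the fundamental-theorem-of-calculus identity along the optimal pair for $F$-closedness. You are in fact more careful than the paper, which writes $\int \langle F^{\star}(x)D\psi(x), u\rangle\, d\mu^{\infty}_{x_0} = \lim_n \int \langle F^{\star}(x)D\psi(x), u\rangle\, d\mu^{T_n}_{x_0}$ without explicitly justifying the passage to the limit for this linearly growing integrand; your truncation argument (equivalently, the fact that the uniform $\sigma$-moment bound with $\sigma>1$ upgrades weak-$*$ convergence to $d_1$-convergence, so Lipschitz integrands pass to the limit) fills this gap.
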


\proof First, by definition it follows that $\{ \pi_{1} \sharp \mu^{T}_{x_{0}}\}_{T >0}$ has compact support, uniformly in $T$. Thus, such a family of measures is tight. Let us prove that $\{ \pi_{2} \sharp \mu^{T}\}_{T > 0}$ is also tight.	

On the one hand, by the upper bound in \eqref{eq:L0} we have that
	\begin{equation*}
	\frac{1}{T}V_{T}(x_{0}) \leq \frac{1}{T}\int_{0}^{T}{L(x_{0}, 0)\ ds} \leq  \| L(\cdot, 0)\|_{\infty}.
	\end{equation*}
On the other hand, since $(\gamma_{x_{0}}, u_{x_{0}})$ is a minimizing pair for $V_{T}(x_{0})$, by the lower bound in \eqref{eq:L0} we get
\begin{align*}
& \frac{1}{T}V_{T}(x_{0}) = \frac{1}{T}\int_{0}^{T}{L(\gamma_{x_{0}}(t), u_{x_{0}}(t))\ dt} 
\\
= &  \int_{\T^{d} \times \R^{m}}{L(x,u)\ \mu^{T}_{x_{0}}(dx,du)} \geq \int_{\T^{d} \times \R^{m}}{\big(K_1|u|^{\sigma} - K_2\big)\ \mu^{T}_{x_{0}}(dx,du)}. 	
\end{align*}
Therefore,
\begin{equation*}
K_1\int_{\T^{d} \times \R^{m}}{|u|^{\sigma}\ \mu^{T}_{x_{0}}(dx,du)} \leq   \| L(\cdot, 0)\|_{\infty} + K_2. 
\end{equation*}
Consequently, the family of probability measures $\{ \pi_{2} \sharp \mu^{T}_{x_{0}}\}_{T >0}$ has bounded (w.r.t. $T$) $\sigma$-moment. So,  $\{ \pi_{2} \sharp \mu^{T}_{x_{0}}\}_{T >0}$ is tight.

Since $\{\pi_{1} \sharp \mu^{T}_{x_{0}}\}_{T > 0}$ and $\{\pi_{2} \sharp \mu^{T}_{x_{0}}\}_{T > 0}$ are tight, so is $\{\mu^{T}_{x_{0}}\}_{T >0}$ by \cite[Theorem 5.2.2]{bib:AGS}.  Therefore, by Prokhorov's Theorem there exists  $\{T_{n}\}_{n \in \N}$, with $T_{n} \to \infty$, and $\mu^{\infty}_{x_{0}} \in \PP^{\sigma}(\T^{d} \times \R^{m})$ such that $\mu^{T_{n}}_{x_{0}} \rightharpoonup^{*} \mu^{\infty}_{x_{0}}$.

We now show that $\mu^{\infty}_{x_{0}}$ is an $F$-closed measure, that is,
\begin{equation*}
	\int_{\T^{d} \times \R^{m}}{\langle F^{\star}(x)D\psi(x), u \rangle\ \mu^{\infty}_{x_{0}}(dx,du)}=0 \quad (\psi \in C^{1}(\T^{d})).
\end{equation*}
By definition we have that 
\begin{align*}
& \int_{\T^{d} \times \R^{m}}{\langle F^{\star}(x)D\psi(x), u\rangle \mu_{x_{0}}^{T_{n}}(dx,du)} = \frac{1}{T_{n}}\int_{0}^{T_{n}}{\langle F^{\star}(\gamma_{x_{0}}(t)D\psi(\gamma_{x_{0}}(t)), u_{x_{0}}(t)\rangle\ dt}
\\
=\ & \frac{1}{T_{n}}\int_{0}^{T_{n}}{\langle D\psi(\gamma_{x_{0}}(t), \dot\gamma_{x_{0}}(t)\rangle\ dt} = \frac{\psi(\gamma_{x_{0}}(T_{n})) - \psi(x_{0})}{T_{n}}. 
\end{align*}
So, in view of the compactness of the state space we obtain
\begin{equation*}
\int_{\T^{d} \times \R^{m}}{\langle F^{\star}(x)D\psi(x), u \rangle\ \mu^{\infty}_{x_{0}}(dx,du)} = \lim_{n\to \infty} \frac{\psi(\gamma_{x_{0}}(T_{n})) - \psi(x_{0})}{T_{n}} = 0. \eqno\square
\end{equation*}

	The following property, which is interesting in its own right, will be crucial for the characterization of the critical constant derived in \Cref{thm:main1} below.

	\begin{proposition}\label{prop:minimax}
	We have that
	\begin{equation}\label{eq:RR}
	\inf_{\mu \in \C_{F}} \int_{\T^{d} \times \R^{m}}{L(x,u)\ \mu(dx,du)}=-\inf_{\psi \in C^{1}(\T^{d})} \sup_{x \in \T^{d}} H(x, D\psi(x)). 	
	\end{equation}
	\end{proposition}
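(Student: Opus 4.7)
Denote $\alpha := \inf_{\mu \in \C_F} \int L\, d\mu$ and $\beta := \inf_{\psi \in C^1(\T^d)} \sup_{x \in \T^d} H(x, D\psi(x))$, so the claim is $\alpha = -\beta$. My plan is to establish weak duality by a Fenchel--Young estimate and strong duality by going through the critical constant $\mane$.

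\textbf{Weak duality ($\alpha \geq -\beta$).} The definition of $H$ as the Legendre transform in \eqref{eq:Hamiltonian} yields the pointwise Fenchel--Young inequality
\[
\langle F^{*}(x) D\psi(x), u\rangle \leq L(x, u) + H(x, D\psi(x)) \qquad \forall\, (x, u) \in \T^{d} \times \R^{m},\ \psi \in C^{1}(\T^{d}).
\]
Integrating against any $\mu \in \C_{F}$ and killing the left-hand side via \Cref{def:Closedmeasure} gives $0 \leq \int L\, d\mu + \sup_{x} H(x, D\psi(x))$, i.e.\ $\int L\, d\mu \geq -\sup_{x} H(x, D\psi(x))$. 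Taking first the infimum over $\mu \in \C_{F}$ and then the supremum over $\psi \in C^{1}(\T^{d})$ produces $\alpha \geq -\beta$.

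\textbf{Strong duality ($\alpha \leq -\beta$) via $\mane$.} I prove $\alpha \leq \mane$ and $\mane \leq -\beta$ separately.

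\emph{Step A: $\alpha \leq \mane$.} Fix $x_{0} \in \T^{d}$ and let $\mu^{\infty}_{x_{0}} \in \C_{F}$ be the narrow limit provided by \Cref{prop:nonempty} of the occupation measures $\mu^{T_{n}}_{x_{0}}$ built in \eqref{eq:uniformmeasure} from an optimal trajectory for $V^{T_{n}}(x_{0})$. By construction $\int L\, d\mu^{T_{n}}_{x_{0}} = V^{T_{n}}(x_{0})/T_{n}$, and the standard ergodic limit $V^{T}(x_{0})/T \to \mane$ (established by controlling $V^{T}(x_{0}) - \mane T$ above using $\chi(\gamma(T)) - \chi(x_{0})$ for a fixed-point $\chi$ of $T_{t}$, and below using the domination $\chi \prec L - \mane$) combined with the narrow lower semicontinuity of $\mu \mapsto \int L\, d\mu$ (valid since $L$ is continuous and bounded below by \eqref{eq:L0}) give
\[
\alpha \leq \int L\, d\mu^{\infty}_{x_{0}} \leq \liminf_{n \to \infty} \int L\, d\mu^{T_{n}}_{x_{0}} = \mane.
\]

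\emph{Step B: $\mane \leq -\beta$.} Let $\chi$ be a fixed-point of $T_{t}$ (e.g.\ $\chi = \PB(x_{0}, \cdot)$ by \Cref{cor:viscoHJ}). By \Cref{lem:equilip} and assumption (S) via \cite[Theorem~1]{bib:CR}, $\chi$ is Lipschitz w.r.t.\ the Euclidean distance, hence differentiable a.e.\ with $H(x, D\chi(x)) \leq -\mane$ a.e. Let $\rho_{\eps}$ be a standard mollifier and set $\chi_{\eps} := \chi * \rho_{\eps} \in C^{\infty}(\T^{d})$, so $D\chi_{\eps} = D\chi * \rho_{\eps}$ and $\|D\chi_{\eps}\|_{\infty} \leq \mathrm{Lip}(\chi)$ uniformly in $\eps$. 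The convexity of $H(x, \cdot)$ (Jensen) together with the quadratic Lipschitz bound \eqref{eq:Hcondition} yields
\[
H(x, D\chi_{\eps}(x)) \leq \int H(x, D\chi(y))\, \rho_{\eps}(x - y)\, dy \leq \int \bigl(H(y, D\chi(y)) + C\eps\bigr)\, \rho_{\eps}(x - y)\, dy \leq -\mane + C\eps,
\]
where $C$ depends on $C_{H}(1 + \mathrm{Lip}(\chi)^{2})$. Hence $\beta \leq \sup_{x} H(x, D\chi_{\eps}) \leq -\mane + C\eps$, and letting $\eps \to 0$ gives $\mane \leq -\beta$.

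Chaining the three inequalities produces $-\beta \leq \alpha \leq \mane \leq -\beta$, so all quantities agree, which is \eqref{eq:RR}.

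\textbf{Main obstacle.} The decisive step is the mollification argument in Step B, which rests on two ingredients that both rely on (S): the Euclidean-Lipschitz regularity of $\chi$ (giving a uniform bound on $\|D\chi_{\eps}\|_{\infty}$), and the quadratic-in-$p$ control of the Lipschitz constant of $H(\cdot, p)$ in \eqref{eq:Hcondition} (needed to absorb the error from shifting the $x$-slot of $H$ by $\eps$). Without (S), $\chi$ is merely sub-Riemannian Lipschitz---and at best Hölder in the Euclidean metric---so $|D\chi_{\eps}|$ may blow up and the estimate above would fail, which is precisely why the paper restricts to sub-Riemannian systems admitting no nontrivial singular minimizers. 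An alternative route would be to apply a Sion-type minimax theorem directly to $\Phi(\mu, \psi) = \int [L - \langle F^{*}D\psi, u\rangle]\, d\mu$ on $\PP^{\sigma}_{\kappa} \times C^{1}(\T^{d})$, but coupling $\kappa$ to $\|D\psi\|_{\infty}$ (since the optimal Dirac mass in $\inf_{\mu} \Phi(\mu, \psi)$ has $|u|$ controlled by $\|D\psi\|_{\infty}$ through superlinearity of $L$) introduces its own technicalities that the route via $\mane$ avoids.
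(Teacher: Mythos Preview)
Your argument is correct, but the route is genuinely different from the paper's. The paper proves \eqref{eq:RR} as a pure duality statement, independent of the critical value: \Cref{lem:minmaxprel} rewrites the constrained infimum over $\C_F$ as an $\inf_\mu \sup_\psi$ over \emph{all} $\mu \in \PP^\sigma$ (penalising non-closedness), and then a Minimax Theorem (applied to $\mathcal{F}(\psi,\mu)=\int(L-\langle F^*D\psi,u\rangle)\,d\mu$, with compactness of the sublevel set $\{\mathcal{F}(0,\cdot)\leq c^*\}$ supplied by tightness and \Cref{lem:weakconv}) swaps $\inf$ and $\sup$; the inner infimum is then evaluated by a Dirac mass to recover $-\sup_x H(x,D\psi)$. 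Only afterwards, in \Cref{thm:main1}, does the paper bring in the occupation measures of \Cref{prop:nonempty} and the mollification \Cref{lem:smoothapprox} to identify the common value with $\mane$. You instead route the strong-duality inequality through $\mane$ from the start, using exactly those two ingredients (the limit $V^T(x_0)/T\to\mane$ plus lower semicontinuity for $\alpha\leq\mane$, and Jensen-plus-\eqref{eq:Hcondition} on a mollified fixed point for $\mane\leq-\beta$). The gain is that you sidestep the minimax machinery and obtain \Cref{thm:main1} simultaneously; the cost is that your proof of \Cref{prop:minimax} now presupposes the existence of a critical solution (\Cref{cor:viscoHJ}) and the ergodic limit from \cite{Cannarsa_2022}, whereas the paper's argument for \Cref{prop:minimax} stands on its own. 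Two small remarks: your Step~B is precisely the content of \Cref{lem:smoothapprox}, and Euclidean Lipschitz continuity of $\chi$ should be attributed to semiconcavity (\Cref{lem:semiconcavity}) rather than to \Cref{lem:equilip}, which only gives sub-Riemannian Lipschitz regularity; and your parenthetical sketch of the ergodic limit is a bit loose on the upper bound for $V^T(x_0)-\mane T$, so it is safer simply to invoke \cite[Theorem~5.3]{Cannarsa_2022} as the paper does.
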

	
	The following two lemmas are needed for the proof of \Cref{prop:minimax}.

	\begin{lemma}\label{lem:minmaxprel}
		We have
		\begin{align}\label{eq:hand1}
	\begin{split}
	& \inf_{\mu \in \C_{F}} \int_{\T^{d} \times \R^{m}}{L(x,u)\ \mu(dx,du)}	
	\\
	= &  \inf_{\mu \in \PP^{2}(\T^{d} \times \R^{m})}\sup_{\psi \in C^{1}(\T^{d})}\int_{\T^{d} \times \R^{m}}{\Big(L(x,u) - \langle F^{\star}(x)D\psi(x),u \rangle  \Big)\ \mu(dx,du)}.
	\end{split}
	\end{align}
   \end{lemma}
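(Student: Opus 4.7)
The plan is to evaluate the inner supremum over $\psi \in C^{1}(\T^{d})$ for each fixed $\mu \in \PP^{\sigma}(\T^{d}\times\R^{m})$, showing it equals $\int L\,d\mu$ when $\mu$ is $F$-closed and $+\infty$ otherwise. Partitioning the outer infimum on the right-hand side into $\C_{F}$ and its complement then collapses it onto the left-hand side. Note first that for every $\psi \in C^{1}(\T^{d})$ and every $\mu \in \PP^{\sigma}$, the function $(x,u) \mapsto \langle F^{\star}(x)D\psi(x), u\rangle$ is $\mu$-integrable, since $|\langle F^{\star}(x)D\psi(x),u\rangle| \leq \|F\|_{\infty}\|D\psi\|_{\infty}(1+|u|^{\sigma})$ and $\mu$ has finite $\sigma$-moment. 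Also $\int L\,d\mu$ is well-defined in $(-\infty, +\infty]$ thanks to the lower bound $L \geq -K_{2}$ from \textbf{(L)}.

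The first case: fix $\mu \in \C_{F}$. By \Cref{def:Closedmeasure},
\begin{equation*}
\int_{\T^{d}\times\R^{m}} \langle F^{\star}(x)D\psi(x), u\rangle\,\mu(dx,du) = 0, \qquad \forall \psi \in C^{1}(\T^{d}).
\end{equation*}
Hence the integrand of the inner supremum is identically $\int L\,d\mu$ in $\psi$, and the supremum equals $\int L\,d\mu$. Taking the infimum over $\C_{F}$ shows that the right-hand side of \eqref{eq:hand1} is bounded above by the left-hand side.

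The second case: fix $\mu \in \PP^{\sigma}\setminus \C_{F}$. By negation of the closure condition, there exists $\psi_{0} \in C^{1}(\T^{d})$ with
\begin{equation*}
c_{0} := \int_{\T^{d}\times\R^{m}} \langle F^{\star}(x)D\psi_{0}(x), u\rangle\,\mu(dx,du) \neq 0.
\end{equation*}
Replacing $\psi_{0}$ with $-\psi_{0}$ if necessary, assume $c_{0} < 0$. For $\lambda > 0$, setting $\psi_{\lambda} = \lambda\psi_{0} \in C^{1}(\T^{d})$ yields
\begin{equation*}
\int_{\T^{d}\times\R^{m}} \bigl(L(x,u) - \langle F^{\star}(x)D\psi_{\lambda}(x), u\rangle\bigr)\,\mu(dx,du) = \int L\,d\mu - \lambda c_{0} \longrightarrow +\infty
\end{equation*}
as $\lambda \to +\infty$, so the inner supremum is $+\infty$ and such a $\mu$ does not contribute to the outer infimum.

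Combining the two cases: the infimum over $\PP^{\sigma}$ in the right-hand side of \eqref{eq:hand1} is attained on $\C_{F}$ and reduces to $\inf_{\mu\in\C_{F}}\int L\,d\mu$, which is the left-hand side. This finishes the argument; there is no substantive obstacle beyond the standard ``indicator function via linear constraint plus Lagrange multiplier'' reformulation, and integrability is controlled by the $\sigma$-moment bound built into $\PP^{\sigma}$.
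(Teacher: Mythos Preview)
Your proof is correct and is precisely the standard ``indicator via Lagrange multiplier'' argument that the paper invokes by referring to \cite[Theorem 1.3]{bib:CV}; the paper announces the reasoning but does not spell it out, so your write-up fills exactly that gap with the intended approach.
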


The proof of the above lemma is based on an argument which is quite common in optimal transport theory (see, e.g., \cite[Theorem 1.3]{bib:CV}). We give the reasoning for the reader's convenience. 

%\proof
%Since $L$ is bounded below we have that
%	\begin{align*}
%		& \inf_{\mu \in \C_{F}} \int_{\T^{d} \times \R^{m}}{L(x,u)\ \mu(dx,du)}	
%	\\
%	= &  \inf_{\mu \in \PP^{2}(\T^{d} \times \R^{m})}\left\{\int_{\T^{d} \times \R^{m}}{L(x,u)\ \mu(dx,du)} + \omega(\mu)\right\}
%	\end{align*}
%where 
%\begin{align*}
%	\omega(\mu)=
%	\begin{cases}
%		0, & \quad \mu \in \C_{F}
%		\\
%		+\infty, & \quad \mu \not\in \C_{F}.
%	\end{cases}
%\end{align*}
%So, observing that
%\begin{equation*}
%	\omega(\mu) = \sup_{\psi \in C^{1}(\T^{d})}-\int_{\T^{d} \times \R^{m}}{\langle F^{\star}(x)D\psi(x), u\rangle\ \mu(dx,du)} 
%\end{equation*}
%we obtain \eqref{eq:hand1}. \qed

 \begin{lemma}\label{lem:weakconv}
Let $\phi \in C(\T^{d} \times \R^{m})$ be such that $$\phi_{0} \leq \phi(x,u) \leq C_{\phi}(1+|u|^{2}), \quad \forall\ (x,u) \in \T^{d} \times \R^{m}$$
for some constants $\phi_{0} \in \R$ and $C_{\phi} \geq 0$. Let $\{\mu_{j}\}_{j \in \N} \in \PP^{2}(\T^{d} \times \R^{m})$ and let $\mu \in \PP^{2}(\T^{d} \times \R^{m})$ be such that $\mu_{j} \rightharpoonup^{*} \mu$ as $j \to \infty$. Then, we have that
\begin{equation}\label{eq:liminfineq}
	\liminf_{j \to \infty} \int_{\T^{d} \times \R^{m}}{\phi(x,u)\ \mu_{j}(dx,du)} \geq \int_{\T^{d} \times \R^{m}}{\phi(x,u)\ \mu(dx,du)}. 
\end{equation}
\end{lemma}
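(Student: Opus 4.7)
The plan is to reduce to the case of a nonnegative continuous integrand by a constant shift, then truncate to bounded continuous approximants to which ordinary weak-$^*$ (narrow) convergence applies, and finally use monotone convergence to recover $\phi$ in the limit.

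First I would set $\tilde\phi(x,u) := \phi(x,u) - \phi_{0} \geq 0$. Since each $\mu_{j}$ and $\mu$ is a probability measure,
\begin{equation*}
\int_{\T^{d}\times\R^{m}} \phi\, d\mu_{j} = \int_{\T^{d}\times\R^{m}} \tilde\phi\, d\mu_{j} + \phi_{0}, \qquad \int_{\T^{d}\times\R^{m}} \phi\, d\mu = \int_{\T^{d}\times\R^{m}} \tilde\phi\, d\mu + \phi_{0},
\end{equation*}
so it suffices to prove \eqref{eq:liminfineq} with $\tilde\phi$ in place of $\phi$. Hence, after renaming, I assume $\phi \geq 0$ on $\T^{d}\times \R^{m}$. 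Note that the upper bound $\phi(x,u) \leq C_{\phi}(1+|u|^{2})$ together with $\mu \in \PP^{2}(\T^{d}\times\R^{m})$ guarantees $\int \phi\, d\mu < \infty$, so both sides of the inequality are well-defined, but this integrability is not strictly needed for the inequality (if the right-hand side is $+\infty$ there is nothing to prove).

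Next I would introduce, for each $k \in \N$, the truncation
\begin{equation*}
\phi_{k}(x,u) := \min\{\phi(x,u),\, k\}, \quad (x,u) \in \T^{d}\times \R^{m}.
\end{equation*}
Since $\phi$ is continuous, each $\phi_{k}$ belongs to $C_{b}(\T^{d}\times\R^{m})$ and $0 \leq \phi_{k} \nearrow \phi$ pointwise as $k \to \infty$. By the weak-$^*$ convergence $\mu_{j} \rightharpoonup^{*} \mu$ applied to the bounded continuous test function $\phi_{k}$, one has
\begin{equation*}
\int_{\T^{d}\times\R^{m}} \phi_{k}\, d\mu = \lim_{j\to\infty} \int_{\T^{d}\times\R^{m}} \phi_{k}\, d\mu_{j} \leq \liminf_{j\to\infty} \int_{\T^{d}\times\R^{m}} \phi\, d\mu_{j},
\end{equation*}
where the inequality uses $\phi_{k} \leq \phi$.

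Finally, letting $k \to \infty$ and invoking the monotone convergence theorem on the left-hand side gives
\begin{equation*}
\int_{\T^{d}\times\R^{m}} \phi\, d\mu = \sup_{k\in\N}\int_{\T^{d}\times\R^{m}} \phi_{k}\, d\mu \leq \liminf_{j\to\infty}\int_{\T^{d}\times\R^{m}} \phi\, d\mu_{j},
\end{equation*}
which is \eqref{eq:liminfineq}. There is no real obstacle here: the argument is the standard Portmanteau-type lower semicontinuity for nonnegative lower semicontinuous integrands under narrow convergence, and the only subtle point is the normalization step that converts the lower bound $\phi \geq \phi_{0}$ into nonnegativity via the probability-measure property; the quadratic upper bound plays no role in proving the inequality itself, beyond ensuring the finiteness of the limit integral.
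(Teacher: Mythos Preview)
Your argument is correct: the shift to a nonnegative integrand, truncation to $\phi_k=\min\{\phi,k\}\in C_b$, application of weak-$^*$ convergence, and monotone convergence is the standard Portmanteau-type proof of lower semicontinuity, and every step is justified as written. The paper in fact omits the proof of this lemma entirely, so there is nothing to compare against; your proof is exactly the kind of routine verification the authors presumably had in mind.
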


\noindent{\it Proof of \Cref{prop:minimax}.}
We divide the proof into two steps.

\noindent {\bf (1).}  We will prove that 
	\begin{align}\label{eq:claimminimax}
	\begin{split}
		&  \inf_{\mu \in \PP^{\sigma}(\T^{d} \times \R^{m})}\sup_{\psi \in C^{1}(\T^{d})}\int_{\T^{d} \times \R^{m}}{\Big(L(x,u)- \langle F^{\star}(x)D\psi(x),u \rangle  \Big)\ \mu(dx,du)}
		\\
		= &  \sup_{\psi \in C^{1}(\T^{d})} \inf_{\mu \in \PP^{\sigma}(\T^{d} \times \R^{m})} \int_{\T^{d} \times \R^{m}}{\Big(L(x,u)- \langle F^{\star}(x)D\psi(x),u \rangle  \Big)\ \mu(dx,du)}. 
		\end{split}
	\end{align}
For the proof of \eqref{eq:claimminimax} we will apply the Minimax Theorem (\cite[Theorem A.1]{bib:OPA}) to the functional 
\[
\mathcal{F}: C^{1}(\T^{d}) \times \PP^{\sigma}(\T^{d} \times \R^{m}) \to \R
\]
defined by
	\begin{equation*}
	\mathcal{F}(\psi, \mu) = \int_{\T^{d} \times \R^{m}}{\Big(L(x,u)-\langle F^{\star}(x)D\psi(x), u \rangle \Big)\ \mu(dx,du)}.
	\end{equation*}
	To do so, define
	\begin{align*}
	c^{*}=2\|L(\cdot, 0)\|_{\infty}. 	
	\end{align*}
In order to check that the assumptions of the Minimax Theorem are satisfied we have to prove the following.
 \begin{itemize}
 \item[($i$)] $\mathcal{E}:=\big\{\mu \in \PP^{\sigma}(\T^{d} \times \R^{m}):\ \mathcal{F}(0,\mu) \leq c^{*} \big\}$ is compact in  $\big(\PP^{\sigma}(\T^{d} \times \R^{m}), d_{1}\big)$.   Indeed, for any given $\mu \in \PP^{\sigma}(\T^{d} \times \R^{m})$ we know that the support of $\pi_{1} \sharp \mu$ is contained in $\T^{d}$. Moreover, the assumption {\bf (L)} implies that for any $\mu \in \mathcal{E}$ we have that the family $\{\pi_{2} \sharp \mu\}_{\mu \in \mathcal{E}}$ has bounded $\sigma$ moment and, therefore, it is tight. Thus, $\mu$ is tight by \cite[Theorem 5.2.2]{bib:AGS} and  $\mathcal{E}$ is compact by Prokhorov's Theorem.
 \item[($ii$)] The map $\mu \mapsto \mathcal{F}(\psi, \mu)$ is lower semicontinuous on $\mathcal{E}$ for any $\psi \in C^{1}(\T^{d})$. This fact follows from Lemma \ref{lem:weakconv}.
 \end{itemize}

Therefore, by the Minimax Theorem we  obtain \eqref{eq:claimminimax}.

\noindent{\bf (2).} In order to complete the proof we observe that, in view of \eqref{eq:hand1} and \eqref{eq:claimminimax}, 
 \begin{align*}
 	& \inf_{\mu \in \C_{F}} \int_{\T^{d} \times \R^{m}}{L(x,u)\ \mu(dx,du)}
 	\\
 	= & \inf_{\mu \in \PP^{\sigma}(\T^{d} \times \R^{m})}\sup_{\psi \in C^{1}(\T^{d})}\int_{\T^{d} \times \R^{m}}{\Big(L(x,u)- \langle F^{\star}(x)D\psi(x),u \rangle  \Big)\ \mu(dx,du)}
 	\\
 	= &  \sup_{\psi \in C^{1}(\T^{d})} \inf_{\mu \in \PP^{\sigma}(\T^{d} \times \R^{m})} \int_{\T^{d} \times \R^{m}}{\Big(L(x,u)- \langle F^{\star}(x)D\psi(x),u \rangle  \Big)\ \mu(dx,du)}.
\end{align*}
Now, assumption {\bf (L)} ensures the existence of 
\begin{equation*}
 \min_{(x,u) \in \T^{d} \times \R^{m}}\Big\{ L(x,u)- \langle F^{\star}(x)D\psi(x),u \rangle \Big\}.
\end{equation*}
Therefore, by taking a Dirac mass centered at any pair $(\overline{x}, \overline{u})$ at which the above minimum is attained, one deduces that
\begin{align*}
&  \sup_{\psi \in C^{1}(\T^{d})} \inf_{\mu \in \PP^{\sigma}(\T^{d} \times \R^{m})} \int_{\T^{d} \times \R^{m}}{\Big(L(x,u)- \langle F^{\star}(x)D\psi(x),u \rangle  \Big)\ \mu(dx,du)}
\\
 =\	& \sup_{\psi \in C^{1}(\T^{d})} \min_{(x,u) \in \T^{d} \times \R^{m}}\Big\{ L(x,u)- \langle F^{\star}(x)D\psi(x),u \rangle \Big\}.	
\\
	 	=\ & \sup_{\psi \in C^{1}(\T^{d})} \left(-\max_{(x,u) \in \T^{d} \times \R^{m}}\Big\{ \langle F^{\star}(x)D\psi(x),u \rangle - L(x,u)  \Big\}\right)
	\\
	=\ & - \inf_{\psi \in C^{1}(\T^{d})} \max_{(x,u) \in \T^{d} \times \R^{m}}\Big\{  \langle F^{\star}(x)D\psi(x),u \rangle - L(x,u) \Big\}
	\\
 	= &   - \inf_{\psi \in C^{1}(\T^{d})} \max_{ x \in \T^{d}}\  H(x, D\psi(x)).
 \end{align*}
We observe that the last inequality holds because 
\begin{align*}
& \max_{(x,u) \in \T^{d} \times \R^{m}}\Big\{\langle F^{\star}(x)D\psi(x), u \rangle - L(x,u) \Big\}
\\
=\ & \max_{x \in \T^{d}} \sup_{u \in \R^{m}}\Big\{\langle F^{\star}(x)D\psi(x), u \rangle - L(x,u) \Big\} =  \max_{x \in \T^{d}} H(x, D\psi(x)). 
\end{align*}
This completes the proof. \qed

\begin{theorem}\label{thm:main1}
The following holds:
\begin{equation}\label{eq:mumane}
\mane  = \min_{\mu \in \C_{F}} \int_{\T^{d} \times \R^{m}}{L(x,u)\ \mu(dx,du)}. 
\end{equation}
\end{theorem}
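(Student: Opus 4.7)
My proof would combine the min--max identity of \Cref{prop:minimax} with an explicit construction of an attaining measure. By \Cref{prop:minimax},
\[
\inf_{\mu \in \C_F} \int_{\T^d \times \R^m} L(x,u)\, \mu(dx, du) = -\inf_{\psi \in C^1(\T^d)} \sup_{x \in \T^d} H(x, D\psi(x)),
\]
so the task reduces to identifying the right-hand side with $\mane$ and to showing that the infimum on the left is actually attained.

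\textbf{Step 1 (lower bound $\inf_\mu \int L\, d\mu \geq \mane$).} Fix $\psi \in C^1(\T^d)$ and set $c_\psi := \sup_{x} H(x, D\psi(x))$. The Fenchel inequality $\langle F^\star(x) D\psi(x), u\rangle \leq L(x,u) + H(x, D\psi(x)) \leq L(x, u) + c_\psi$ together with the chain rule yields, for any trajectory-control pair $(\gamma, u) \in \Gamma_{a,b}^{\gamma(a) \to \gamma(b)}$,
\[
\psi(\gamma(b)) - \psi(\gamma(a)) = \int_a^b \langle D\psi(\gamma(s)), F(\gamma(s)) u(s)\rangle\, ds \leq \int_a^b L(\gamma, u)\, ds + c_\psi (b-a),
\]
so $\psi \prec L - (-c_\psi)$ in the sense of \Cref{def:domcal}. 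Now, combining \eqref{eq:subsol} with the test $x=y$, $t \to \infty$ and the construction of $\PB$ shows that a dominated function for $L - c$ can exist only for $c \leq \mane$ (otherwise $A_t(x,x) - ct \to -\infty$ while the dominated condition forces $0 \leq A_t(x,x) - ct$). Hence $-c_\psi \leq \mane$, i.e.\ $\sup_x H(x, D\psi) \geq -\mane$ for every $\psi \in C^1(\T^d)$. Taking the infimum over $\psi$ and invoking \Cref{prop:minimax} gives $\inf_\mu \int L\, d\mu \geq \mane$.

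\textbf{Step 2 (attainment at value $\mane$).} Fix $x_0 \in \T^d$ and let $\mu^\infty_{x_0} \in \C_F$ be the $F$-closed measure constructed in \Cref{prop:nonempty} as a narrow limit of some $\{\mu^{T_n}_{x_0}\}$. By definition \eqref{eq:uniformmeasure}, $\int L\, d\mu^{T_n}_{x_0} = V^{T_n}(x_0)/T_n$. Taking $\chi := \PB(x_0, \cdot)$, which by \Cref{cor:viscoHJ} is a fixed-point of $T_t$ satisfying $T_t \chi = \chi + \mane t$ and which is bounded on $\T^d$ (being Lipschitz on a compact manifold by Lemma \ref{lem:equilip}), the sup-norm non-expansiveness of the Lax--Oleinik semigroup gives
\[
|V^T(x_0) - \chi(x_0) - \mane T| = |T_T 0(x_0) - T_T \chi(x_0)| \leq \|\chi\|_\infty,
\]
so that $V^T(x_0)/T \to \mane$ as $T \to \infty$. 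Since $L + K_2 \geq 0$ is continuous, \Cref{lem:weakconv} applied to $\phi = L + K_2$ (or, equivalently, standard lower semicontinuity of $\mu \mapsto \int L\, d\mu$ under narrow convergence of measures with uniformly bounded $\sigma$-moment) yields
\[
\int L\, d\mu^\infty_{x_0} \leq \liminf_{n \to \infty} \int L\, d\mu^{T_n}_{x_0} = \lim_{T \to \infty} \frac{V^T(x_0)}{T} = \mane.
\]
Combining with Step 1, $\int L\, d\mu^\infty_{x_0} = \mane$, so the infimum is a minimum, attained at $\mu^\infty_{x_0}$, and equals $\mane$.

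\textbf{Main difficulty.} The genuinely delicate ingredient is Step 1: identifying $\mane$ as the supremum of constants $c$ admitting a function dominated by $L - c$. Once this standard characterization is in place, everything else is a matter of assembling results already proved in the paper --- the min--max formula \Cref{prop:minimax}, the existence of closed cluster measures for optimal trajectories \Cref{prop:nonempty}, the contraction property of $T_t$ applied to the fixed-point $\PB(x_0, \cdot)$ from \Cref{cor:viscoHJ}, and the lower semicontinuity of the Lagrangian action from \Cref{lem:weakconv}. Notably, this route bypasses having to construct smooth approximate subsolutions to the critical equation, which in the sub-Riemannian setting would require a separate mollification argument exploiting convexity of $H$ in $p$ and the Euclidean Lipschitz bound on $\chi$ coming from assumption \textbf{(S)} via Lemma \ref{lem:semiconcavity}.
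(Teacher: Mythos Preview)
Your Step~1 proves the inequality in the wrong direction. From $\psi \prec L-(-c_\psi)$ and the fact that domination by $L-c$ forces $c\le\mane$, you correctly deduce $c_\psi\ge -\mane$ for every $\psi\in C^1(\T^d)$, hence $\inf_\psi c_\psi\ge -\mane$. But then $-\inf_\psi c_\psi\le\mane$, and \Cref{prop:minimax} yields
\[
\inf_{\mu\in\C_F}\int L\,d\mu \;=\; -\inf_\psi \sup_x H(x,D\psi)\;\le\;\mane,
\]
the \emph{upper} bound --- which you then re-derive, with attainment, in Step~2. The lower bound $\inf_\mu\int L\,d\mu\ge\mane$ is never established in your argument.

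For that lower bound one needs the \emph{opposite} input to the min--max formula: for every $\eps>0$ a function $\psi_\eps\in C^1(\T^d)$ with $\sup_x H(x,D\psi_\eps)\le -\mane+\eps$, i.e.\ a smooth approximate subsolution of the critical equation. This is precisely the mollification step you claim to bypass; the paper supplies it via \Cref{lem:smoothapprox}, using convexity of $H$ in $p$, the estimate \eqref{eq:Hcondition}, and the Euclidean Lipschitz bound on the critical solution $\chi$ coming from semiconcavity (\Cref{lem:semiconcavity}, which in turn rests on assumption {\bf (S)}). Without producing such $\psi_\eps$, the right-hand side of \Cref{prop:minimax} cannot be bounded below by $\mane$, and the proof is incomplete. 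Your Step~2 is essentially the paper's own argument for the upper bound and attainment, the only difference being that you derive $V^T(x_0)/T\to\mane$ from the contraction property of $T_t$ and the fixed point of \Cref{cor:viscoHJ} rather than quoting \cite{Cannarsa_2022}.
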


Once again, we need a lemma for the proof of \Cref{thm:main1}. 

\begin{lemma}\label{lem:smoothapprox}
Let $\chi$ be a solution of the critical equation. Then, there exists a constant $K \geq 0$ such that, for any $\eps > 0$
\begin{equation}\label{eq:smoothsub}
	\mane + H(x, D\chi_{\eps}(x)) \leq K\eps \quad \forall\ x \in \T^{d},
\end{equation}
where $\chi_{\eps}(x)= \chi \star \xi_{\eps}(x)$ and $\xi_{\eps}$ is a smooth mollifier. 
\end{lemma}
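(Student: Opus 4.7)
The plan is the standard mollification argument, exploiting the convexity of $H(x,\cdot)$ together with the local Lipschitz bound \eqref{eq:Hcondition} from \Cref{rem:loclipschitz}. The only subtlety is ensuring that $D\chi$ remains bounded almost everywhere so that the $(1+|p|^{2})$ factor in \eqref{eq:Hcondition} does not blow up; this will be handled by the semiconcavity of $\chi$.

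First I would note that, by \Cref{lem:semiconcavity}, any fixed-point $\chi$ of $T_t$ is locally semiconcave on $\T^{d}$, hence (being defined on the compact manifold $\T^{d}$) Lipschitz continuous in the Euclidean sense. Therefore there exists $M \geq 0$ such that $|D\chi(y)| \leq M$ for almost every $y \in \T^{d}$. Moreover, since $\chi \prec L - \mane$, the function $\chi$ is a viscosity---and almost everywhere---subsolution of the critical equation, so
\begin{equation*}
\mane + H(y, D\chi(y)) \leq 0 \quad \text{for a.e. } y \in \T^{d}.
\end{equation*}

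Next, fix $\eps > 0$ and write $D\chi_{\eps}(x) = \int_{\R^{d}} \xi_{\eps}(y)\, D\chi(x-y)\, dy$. Since $H(x,\cdot)$ is convex (as the Legendre transform in \eqref{eq:Hamiltonian} defines it as a supremum of affine functions of $p$), Jensen's inequality yields
\begin{equation*}
H(x, D\chi_{\eps}(x)) \leq \int_{\R^{d}} \xi_{\eps}(y)\, H(x, D\chi(x-y))\, dy.
\end{equation*}
I then split each integrand using \eqref{eq:Hcondition}:
\begin{equation*}
H(x, D\chi(x-y)) \leq H(x-y, D\chi(x-y)) + C_{H}\bigl(1 + |D\chi(x-y)|^{2}\bigr)|y|.
\end{equation*}

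Combining these two inequalities with the almost-everywhere subsolution property and the bound $|D\chi| \leq M$ a.e., and using that $\supp \xi_{\eps} \subset \overline{B}_{\eps}$ so $|y| \leq \eps$ on the support, I obtain
\begin{equation*}
H(x, D\chi_{\eps}(x)) \leq \int_{\R^{d}} \xi_{\eps}(y)\bigl[-\mane + C_{H}(1+M^{2})|y|\bigr]dy \leq -\mane + C_{H}(1+M^{2})\eps.
\end{equation*}
Setting $K := C_{H}(1+M^{2})$ yields \eqref{eq:smoothsub}. The only step that requires any care is the justification that $|D\chi|$ is essentially bounded: without the Euclidean Lipschitz regularity of $\chi$ coming from semiconcavity, the quadratic $|p|^{2}$ growth of the error term in \eqref{eq:Hcondition} would prevent one from absorbing it into a $\eps$-small remainder.
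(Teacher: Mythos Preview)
Your argument is correct and is exactly the standard mollification device one expects here: Jensen's inequality for the convex map $p\mapsto H(x,p)$, followed by the Lipschitz estimate \eqref{eq:Hcondition} to pass from $H(x,D\chi(x-y))$ to $H(x-y,D\chi(x-y))$, with the uniform bound on $|D\chi|$ supplied by the Euclidean semiconcavity of $\chi$ (\Cref{lem:semiconcavity}). The paper omits the proof of this lemma, so there is nothing to compare against; your write-up is precisely the argument the authors had in mind, and your remark that semiconcavity (hence Euclidean Lipschitz regularity) is the key input---without it the $(1+|p|^{2})$ factor would not be controlled---is the right point to stress.
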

%\proof
%As observed in \Cref{rem:loclipschitz} by {\bf (S)} we have that $\chi$ is Lipschitz continuous w.r.t. Euclidean distance. So,
%\begin{equation}\label{eq:almostev}
%\mane + H(x, D\chi(x)) = 0, \quad \text{a.e.}\ x \in \T^{d}.	
%\end{equation}
%
%
%Let $x_{0} \in \T^{d}$. Then, by Jensen's inequality  we get
%\begin{align*}
%\mane + H(x_{0}, D\chi_{\eps}(x_{0})) =&  \mane + H\left(x_{0}, \int_{\T^{d}}{D\chi(x_{0}-y) \xi_{\eps}(y)\ dy}\right) 
%\\
%\leq & \int_{\T^{d}}{\big(\mane + H(x_{0}, D\chi(x_{0}-y)) \big)\xi_{\eps}(y)\ dy}.  	
%\end{align*}
%Moreover, 
%\begin{align*}
%	& \int_{\T^{d}}{\big[\mane + H(x_{0}, D\chi(x_{0}-y)) \big]\xi_{\eps}(y)\ dy} 
%	\\
%	= & \underbrace{\int_{\T^{d}}{\big[\mane + H(x_{0}-y, D\chi(x_{0}-y)) \big]\xi_{\eps}(y)\ dy}}_{\bf I} 
%	\\
%	+ & \underbrace{\int_{\T^{d}}{\big[H(x_{0}, D\chi(x_{0}-y)) - H(x_{0}-y, D\chi(x_{0}-y))  \big]\xi_{\eps}(y)\ dy}}_{\bf II}.
%\end{align*}
%Since, ${\bf I}=0$  by \eqref{eq:almostev} and ${\bf II} \leq K \eps$ for some constant $K \geq 0$ on account of \eqref{eq:Hcondition}, we deduce \eqref{eq:smoothsub} because $x_{0}$ is an arbitrary point in $\T^{d}$. \qed

\noindent {\it Proof of \Cref{thm:main1}.} Observe that it is enough to show that there exists $\kappa_0 \geq 0$ such that for any $\kappa \geq \kappa_0$ we have 
\begin{equation*}
\mane = \inf_{\mu \in \C^{\kappa}_{F}} \int_{\T^d \times \R^m} L(x, u) \mu(dx, du).
\end{equation*}

First, we recall that by \cite[Theorem 5.3]{Cannarsa_2022} we have that 
 \begin{align*}
 \mane = & \lim_{T \to +\infty} \frac{1}{T} v^{T}(0).  
 \end{align*}
 Hence, appealing to \Cref{lem:weakconv} and recalling that $L(x,u) \geq -K_2$ we obtain
 \begin{align}\label{eq:star}
 \begin{split}
  \mane = & \lim_{T \to \infty}\int_{\T^{d} \times \R^{m}}{L(x,u)\ \mu^{T}_{0}(dx,du)} \geq  \int_{\T^{d} \times \R^{m}}{L(x,u)\ \mu^{\infty}_{0}(dx,du)}.
 \end{split} 
 \end{align}
Here, we recall that $\mu^{T}_{0}$ is defined by
\begin{equation*}
\int_{\T^{d} \times \R^{m}}{\varphi(x,u)\ \mu^{T}_{0}(dx,du)}= \frac{1}{T}\int_{0}^{T}{\varphi(\gamma_{0}(t), u_{0}(t))\ dt}, \quad \forall\ \varphi \in C_{b}(\T^{d} \times \R^{m})
\end{equation*}
with $(\gamma_{0}, u_{0}) \in \Gamma_{0,T}^{0 \to}$ optimal for \eqref{eq:evoValue} and $\mu^{\infty}_{0}$ denotes the limit measure of $\{\mu^{T}_{0}\}_{T >0}$ constructed in \Cref{prop:nonempty}. So, setting
\begin{equation*}
\kappa_0 = \frac{ \| L(\cdot, 0)\|_{\infty} + K_2}{K_1}
\end{equation*}
we have $\mu_{0}^{\infty} \in \C^{\kappa}_{F}$ for any $\kappa \geq \kappa_0 $, which implies
\begin{equation*}
\mane \geq \min_{\mu \in \C^{\kappa}_{F}}\int_{\T^{d} \times \R^{m}}{L(x,u)\ \mu(dx,du)} \quad (\forall\ \kappa \geq \kappa_0).	
\end{equation*}
Next, by \Cref{prop:minimax} we have that for any $\psi \in C^{1}(\T^{d})$
\begin{align}\label{eq:ineqq}
\begin{split}
\inf_{\mu \in \C_{F}} \int_{\T^{d} \times \R^{m}}{L(x,u)\ \mu(dx,du)} = -\inf_{\psi \in C^{1}(\T^{d})} \sup_{x \in \T^{d}} H(x, D\psi(x)). 
\end{split}
\end{align}
Let $\chi$ be a fixed-point of $T_t$. For $\eps \geq 0$ let $\chi_{\eps}(x)= \chi \star \xi^{\eps}(x)$, where $\xi^{\eps}$ is a smooth mollifier. So, \Cref{lem:smoothapprox} yields 
\begin{equation*}
\mane + H(x, D\chi_{\eps}(x)) \leq K\eps, \quad x \in \T^d. 	
\end{equation*}
Then, using $\chi_{\eps}$ to give a lower bound for the right hand side of \eqref{eq:ineqq} we obtain
\begin{align*}
\inf_{\mu \in \C_{F}} \int_{\T^{d} \times \R^{m}}{L(x,u)\ \mu(dx,du)}
\geq\  - \sup_{x \in \T^{d}} H(x, D\chi_{\eps}(x)) \geq \mane - K\eps.
\end{align*}
Hence,
\begin{multline*}
 \min_{\mu \in \C^{\kappa}_{F}} \int_{\T^{d} \times \R^{m}}{L(x,u)\ \mu(dx,du)} \geq \inf_{\mu \in \C_{F}} \int_{\T^{d} \times \R^{m}}{L(x,u)\ \mu(dx,du)}
\\
\geq\ \mane - K\eps \quad (\forall\ \kappa \geq \kappa_0)
\end{multline*} 
as $\eps \downarrow 0$ we get 
\begin{equation*}
\min_{\mu \in \C^{\kappa}_{F}} \int_{\T^{d} \times \R^{m}}{L(x,u)\ \mu(dx,du)} \geq \mane 	
\end{equation*}
for any $\kappa \geq \kappa_0$ and this completes the proof.\qed

\begin{corollary}\label{cor:carmin}
For any $x_{0} \in \T^{d}$ we have that
\begin{equation*}
\mane = \int_{\T^{d} \times \R^{m}}{L(x,u)\ \mu^{\infty}_{x_{0}}(dx,du)}	
\end{equation*}
where $\mu^{\infty}_{x_{0}}$ is given in \Cref{prop:nonempty}.
\end{corollary}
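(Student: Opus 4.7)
The plan is to combine Theorem \ref{thm:main1} (which gives $\mane = \min_{\mu \in \C_F} \int L\, d\mu$) with the convergence of the Cesaro averages along $\mu^\infty_{x_0}$, exploiting the lower-semicontinuity estimate of \Cref{lem:weakconv} together with a reachability bound.

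First, since $\mu^\infty_{x_0}$ is $F$-closed by \Cref{prop:nonempty}, the lower bound
\begin{equation*}
\int_{\T^d \times \R^m} L(x,u)\, \mu^\infty_{x_0}(dx,du) \geq \mane
\end{equation*}
is immediate from \Cref{thm:main1}. The task is therefore to establish the reverse inequality. By construction,
\begin{equation*}
\int_{\T^d \times \R^m} L(x,u)\, \mu^{T}_{x_0}(dx,du) = \frac{1}{T}\int_0^T L(\gamma_{x_0}(s),u_{x_0}(s))\, ds = \frac{V^T(x_0)}{T},
\end{equation*}
so along the subsequence $T_n \to \infty$ produced in \Cref{prop:nonempty}, \Cref{lem:weakconv} applied to $\phi = L$ (which satisfies $L \geq -K_2$ and, near the support of the measures, the required quadratic-type upper bound via {\bf (L)}) yields
\begin{equation*}
\int_{\T^d \times \R^m} L(x,u)\, \mu^\infty_{x_0}(dx,du) \leq \liminf_{n \to \infty} \frac{V^{T_n}(x_0)}{T_n}.
\end{equation*}

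The remaining step is to show that $\limsup_{T \to \infty} V^T(x_0)/T \leq \mane$ for any starting point $x_0$. The ingredient of \cite[Theorem 5.3]{Cannarsa_2022}, already invoked in the proof of \Cref{thm:main1}, gives $\lim_T V^T(0)/T = \mane$, so it suffices to propagate this to arbitrary $x_0$. For this I would concatenate a sub-Riemannian geodesic $(\xi, v) \in \Gamma_{0,d_0}^{x_0 \to 0}$ with $d_0 = d_{\SR}(x_0,0)$ and $\|v\|_\infty \leq 1$ (which costs at most $d_0 \|L(\cdot,\cdot)\|_{\infty,\T^d \times \overline{B}_1}$ by {\bf (L)}) with an optimal control on $[d_0,T]$ starting from $0$; this gives
\begin{equation*}
V^T(x_0) \leq d_0 \|L(\cdot,\cdot)\|_{\infty,\T^d \times \overline{B}_1} + V^{T-d_0}(0),
\end{equation*}
and dividing by $T$ and letting $T \to \infty$ produces $\limsup_T V^T(x_0)/T \leq \mane$. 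Chaining the three inequalities closes the loop, forcing equality throughout.

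The only delicate point is the application of \Cref{lem:weakconv} to $L$, since $L$ has superlinear (order $\sigma$) rather than quadratic growth; however the lemma is used only to deduce the $\liminf$ inequality for a nonnegative (up to an additive constant) integrand against measures of uniformly bounded $\sigma$-moment, and the argument goes through verbatim by truncating $L$ at level $R$ and letting $R \to \infty$. Apart from this, the proof is essentially a packaging of facts already proved: \Cref{thm:main1} gives the lower bound, \Cref{prop:nonempty} gives the existence of $\mu^\infty_{x_0}$ together with its $F$-closedness, and the reachability estimate turns the asymptotic minimality of $V^T(0)/T$ into that of $V^T(x_0)/T$.
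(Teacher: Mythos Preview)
Your proof is correct and is essentially the approach implicit in the paper: the corollary is stated there without proof, as an immediate consequence of the inequality \eqref{eq:star} in the proof of \Cref{thm:main1} (which already combines \Cref{lem:weakconv} with the asymptotic of $V^{T}(0)/T$ from \cite[Theorem 5.3]{Cannarsa_2022}) together with \Cref{thm:main1} itself for the lower bound. The only extra step you supply is the reachability estimate propagating the limit from $0$ to a general $x_{0}$; this is the natural way to fill the gap if one does not invoke the cited result directly at $x_{0}$, and your concatenation argument is exactly the construction used repeatedly elsewhere in the paper (e.g., in \Cref{prop:viscositybarrier} and \Cref{prop:closedaubry}).
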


%\proof
%
%Recalling \Cref{lem:weakconv} we obtain
% \begin{align*}
% \begin{split}
%  \mane = & \lim_{T \to \infty}\int_{\T^{d} \times \R^{m}}{L(x,u)\ \mu^{T}_{0}(dx,du)} \geq \int_{\T^{d} \times \R^{m}}{L(x,u)\ \mu^{\infty}_{0}(dx,du)}.
% \end{split} 
% \end{align*}
% Thus, the conclusion follows from \Cref{thm:main1} since $\mu^{\infty}_{0}$ is  $F$-closed by \Cref{prop:nonempty}.  \qed

\subsection{Strongly closed measures}

Here, we restrict the notion of $F$-closed measures in order to allow test functions to be semiconcave, which is, a larger class of test functions than the space $C^1$ considered in \Cref{def:Closedmeasure}.

For brevity of notation we denote by $\SC(\T^{d})$ the set of all semiconcave function defined on $\T^{d}$. Moreover, we introduce the one-sided horizontal directional derivative 
\begin{equation*}
\partial_{F} \varphi(x, \theta) = \lim_{h \to 0^{+}} \frac{\varphi(x + hF(x)\theta) - \varphi(x)}{h}, \quad \varphi \in \SC(\T^{d}) 
\end{equation*}
 at any point $x \in \T^{d}$ and direction $\theta \in \R^{m}$. 
 
 \begin{definition}
 We say that $\mu \in \PP^{\sigma}(\T^{d} \times \R^{m})$ is a strongly $F$-closed measure if
	\begin{equation*}
	\int_{\T^{d} \times \R^{m}}{\partial_{F}\varphi(x, u)\ \mu(dx,du)}=0, \quad \forall\ \varphi \in \SC(\T^{d}). 	
	\end{equation*}
	We denote by $\C^{S}_{F}$ the set of all $F$-closed measures and by $\C^{S}_{F}(\kappa)= \C^{S}_{F} \cap \PP^{\sigma}_{\kappa}(\T^d \times \R^m)$. 
 \end{definition}

\begin{lemma}
$\C^{S}_{F}(\kappa)$ is a compact subset of $\PP^{\sigma}_{\kappa}(\T^{d} \times \R^{m})$ w.r.t. distance $d_1$.
\end{lemma}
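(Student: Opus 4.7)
Since $\C^{S}_{F}(\kappa) \subseteq \PP^{\sigma}_{\kappa}(\T^{d}\times\R^{m})$, and the latter is $d_1$-compact by Prokhorov's theorem (as already recorded earlier in the paper), it suffices to prove that $\C^{S}_{F}(\kappa)$ is $d_1$-closed. Take a sequence $(\mu_n)$ in $\C^{S}_{F}(\kappa)$ with $\mu_n\to\mu$ in $d_1$. The $\sigma$-moment bound $\int|u|^{\sigma}\,d\mu\leq\kappa$ is inherited from lower semicontinuity of moments under narrow convergence, so $\mu\in\PP^{\sigma}_{\kappa}$; the substantive task is to check that $\int\partial_{F}\varphi\,d\mu=0$ for every $\varphi\in\SC(\T^{d})$.

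Fix such a $\varphi$ and consider its mollification $\varphi_{\eps}=\varphi\ast\xi_{\eps}$, which is $C^{\infty}$ and semiconcave with the same semiconcavity constant as $\varphi$; in particular $\varphi_{\eps}\in\SC(\T^{d})$. Since $\partial_{F}\varphi_{\eps}(x,u)=\langle D\varphi_{\eps}(x),F(x)u\rangle$ is jointly continuous on $\T^{d}\times\R^{m}$ and satisfies the uniform linear bound $|\partial_{F}\varphi_{\eps}(x,u)|\leq\mathrm{Lip}(\varphi)\,\|F\|_{\infty}\,|u|$, and since $d_1$-convergence together with the uniform $\sigma$-moment bound ($\sigma>1$) guarantees convergence of integrals of continuous functions with linear $u$-growth, the identity $\int\partial_{F}\varphi_{\eps}\,d\mu_n=0$ (which holds because each $\mu_n$ is strongly $F$-closed) passes to the limit as $n\to\infty$, giving $\int\partial_{F}\varphi_{\eps}\,d\mu=0$ for every $\eps>0$.

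The remaining step is to send $\eps\downarrow 0$ in this identity. I would exploit the semiconcavity representation $\partial_{F}\varphi(x,u)=\min_{p\in D^{+}\varphi(x)}\langle p,F(x)u\rangle$ together with the upper semicontinuity of $x\mapsto D^{+}\varphi(x)$. On the one hand, this representation makes $\partial_{F}\varphi$ jointly lower semicontinuous in $(x,u)$, so Portmanteau for LSC integrands with uniformly integrable lower bound (here $-\mathrm{Lip}(\varphi)\|F\|_{\infty}|u|$, controlled by the $\sigma$-moment) yields $\int\partial_{F}\varphi\,d\mu\leq 0$. On the other hand, the inclusion $D\varphi_{\eps}(x)\in\overline{\mathrm{co}}\bigcup_{y\in B_{\eps}(x)}D^{+}\varphi(y)$, combined with the USC of $D^{+}\varphi$, produces pointwise bounds of the form
\[
\partial_{F}\varphi(x,u)-\delta(\eps)\,|F(x)u|\;\leq\;\partial_{F}\varphi_{\eps}(x,u)\;\leq\;-\partial_{F}\varphi(x,-u)+\delta(\eps)\,|F(x)u|
\]
with $\delta(\eps)\to 0$ depending only on the modulus of $D^{+}\varphi$; integrating these against $\mu$ and using $\int\partial_{F}\varphi_{\eps}\,d\mu=0$ together with Fatou's lemma and reverse Fatou closes the argument and forces $\int\partial_{F}\varphi\,d\mu=0$.

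The main obstacle is exactly this $\eps\to 0$ passage: the pointwise convergence $\partial_{F}\varphi_{\eps}(x,u)\to\partial_{F}\varphi(x,u)$ holds only on the set of points where $\varphi$ is differentiable, which is Lebesgue-full but need not carry $\mu$-mass, so plain dominated convergence is unavailable. The argument has to rely on the two-sided control coming from the min-representation, combined with the pivotal fact that $\int\partial_{F}\varphi_{\eps}\,d\mu=0$ is an exact equality rather than an inequality, which is what lets Fatou and reverse Fatou collapse the one-sided semicontinuous limits into the required bilateral identity.
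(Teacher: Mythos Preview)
Your reduction to closedness and your passage $\int\partial_F\varphi_\eps\,d\mu_n=0\to\int\partial_F\varphi_\eps\,d\mu=0$ are fine, and your two-sided estimate
\[
\partial_{F}\varphi(x,u)-\delta(\eps)\,|F(x)u|\;\leq\;\partial_{F}\varphi_{\eps}(x,u)\;\leq\;-\partial_{F}\varphi(x,-u)+\delta(\eps)\,|F(x)u|
\]
is also correct. The gap is in the last step: integrating the left inequality against $\mu$ and letting $\eps\downarrow 0$ gives $\int\partial_F\varphi(x,u)\,d\mu\le 0$, while the right inequality gives $\int\bigl[-\partial_F\varphi(x,-u)\bigr]\,d\mu\ge 0$. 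These are \emph{not} opposite inequalities, because
\[
-\partial_F\varphi(x,-u)=\max_{p\in D^+\varphi(x)}\langle p,F(x)u\rangle\;\ge\;\min_{p\in D^+\varphi(x)}\langle p,F(x)u\rangle=\partial_F\varphi(x,u),
\]
with strict inequality precisely where $\varphi$ fails to be horizontally differentiable. So your bounds squeeze $0$ between $\int\partial_F\varphi\,d\mu$ and $\int\max_p\langle p,Fu\rangle\,d\mu$, but they do not force $\int\partial_F\varphi\,d\mu=0$. No combination of Fatou and reverse Fatou bridges this: the obstruction is algebraic, not analytic. Mollification averages $D\varphi$ over a full ball and therefore cannot recover the one-sided min that defines $\partial_F\varphi$ at singular points---and $\mu$ may well live on that singular set.

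The paper handles exactly this by replacing $\partial_F\varphi_\eps$ with the finite difference quotient $h^{-1}[\varphi(x+hF(x)u)-\varphi(x)]$. For each fixed $h>0$ this is continuous with linear growth in $u$, so one can pass to the limit $\mu_k\to\mu$ directly; and as $h\downarrow 0$ it converges to $\partial_F\varphi(x,u)$ \emph{everywhere}, by the very definition of the one-sided directional derivative. Semiconcavity (with $p\in D^+\varphi(x)$ for the upper bound and $p\in D^+\varphi(x+hF(x)u)$ plus upper semicontinuity of $D^+\varphi$ for the lower bound) then controls the difference between the quotient and $\partial_F\varphi(x,u)$ uniformly enough to close both inequalities. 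If you want to salvage your route, drop the mollification and argue with these difference quotients instead.
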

\proof
Let $\varphi \in \SC(\T^{d})$, let $\{\mu_{k}\}_{k \in \N} \subset \C^{S}_{F}$ be such that $\mu_{k} \rightharpoonup^{*} \mu$ and let $x \in \T^{d}$, $h \in \T^{d}$ $u \in \R^{m}$. Let $\omega$ be the modulus of semiconcavity of $\varphi$. Then, we have
\begin{equation*}
\varphi(x+hF(x)u) - \varphi(x) \leq \langle p, hF(x)u \rangle + \omega(|h|)|u|^{2}, \quad \forall\ p \in D^{+}_{F}\varphi(x). 
\end{equation*}
So, dividing by $h > 0$ we get
\begin{equation*}
\frac{\varphi(x+hF(x)u) - \varphi(x)}{h} \leq \langle p, F(x)u \rangle + \frac{\omega(|h|)}{h}|u|^{2}, \quad \forall\ p \in D^{+}_{F}\varphi(x). 
\end{equation*}
So, taking the minimum over $p \in D^{+}_{F}\varphi(x)$ on the right-hand side we have that 
\begin{equation*}
\frac{\varphi(x+hF(x)u) - \varphi(x)}{h} \leq \partial_{F}\varphi(x, u) + \frac{\omega(|h|)}{h}|u|^{2}.
\end{equation*}
Then, integrating the above inequality w.r.t. $\mu_{k}$ we obtain
\begin{multline*}
\int_{\T^{d} \times \R^{m}} \frac{\varphi(x+hF(x)u) - \varphi(x)}{h}\ \mu_{k}(dx, du) 
\\
\leq \int_{\T^{d} \times \R^{m}} \partial_{F}\varphi(x, u)\ \mu_{k}(dx, du) + \frac{\omega(|h|)}{h} \int_{\T^{d} \times \R^{m}} |u|^{2}\ \mu_{k}(dx, du)
\end{multline*}
Hence, as $k \uparrow \infty$ we get
\begin{equation*}
\int_{\T^{d} \times \R^{m}} \frac{\varphi(x+hF(x)u) - \varphi(x)}{h}\ \mu(dx, du) \leq \frac{\omega(|h|)}{h} \int_{\T^{d} \times \R^{m}} |u|^{2}\ \mu(dx, du).
\end{equation*}
and, as $h \to 0^{+}$ we have 
\begin{equation*}
\int_{\T^{d} \times \R^{m}} \partial_{F} \varphi(x, u)\ \mu(dx, du) \leq 0. 
\end{equation*}

Next, we proceed to show that 
\begin{equation*}
\int_{\T^{d} \times \R^{m}} \partial_{F} \varphi(x, u)\ \mu(dx, du) \geq 0. 
\end{equation*}
Again from the semiconcavity of $\varphi$ we have that 
\begin{equation*}
\varphi(x+hF(x)u) - \varphi(x) \geq \langle p, hF(x)u \rangle - \omega(|h|)|u|^{2}, \quad \forall\ p \in D^{+}_{F}\varphi(x+hF(x)u).
\end{equation*}
Appealing to the compactness of the state space and the upper-semicontinuity of the superdiffrential we have that for any $R \geq 0$, and any $\eps \geq 0$ there exists $h(R, \eps) \geq 0$ such that
\begin{equation*}
D^{+}_{F}\varphi(x+hF(x)u) \subset D^{+}_{F} \varphi(x) + \eps B_{1}, \quad \forall\ x \in \T^{d},\, \forall\  u \in B_{R}, \forall\ h \leq h(R, \eps). 
\end{equation*}
Hence, 
\begin{equation*}
\frac{\varphi(x+hF(x)u) - \varphi(x)}{h} \geq \partial_{F}\varphi(x, u) - \left(\eps |u| + \frac{\omega(|h|)}{h}|u|^{2} \right). 
\end{equation*}
Integrating w.r.t. $\mu_{k}$ over $\T^{d} \times B_{R}$ we get
\begin{align*}
& \int_{\T^{d} \times B_{R}} \frac{\varphi(x+hF(x)u) - \varphi(x)}{h}\ \mu_{k}(dx, du) 
\\
\geq\ & \int_{\T^{d} \times B_{R}} \partial_{F}\varphi(x, u)\ \mu_{k}(dx, du) - C\left(\eps + \frac{\omega(|h|)}{h} \right) \int_{\T^{d} \times B_{R}} |u|^{2}\ \mu_{k}(dx, du)
\end{align*}
So, 
\begin{multline*}
 \int_{\T^{d} \times B_{R}} \partial_{F}\varphi(x, u)\ \mu_{k}(dx, du) 
\\
=\  \int_{\T^{d} \times \R^{m}} \partial_{F}\varphi(x, u)\ \mu_{k}(dx, du) + \int_{\T^{d} \times B_{R}^{c}} \partial_{F}\varphi(x, u)\ \mu_{k}(dx, du)
\\
\geq\  \frac{C}{1+R}.
\end{multline*}
Thus, as $k \to \infty$ we get
\begin{equation*}
\int_{\T^{d} \times B_{R}} \frac{\varphi(x+hF(x)u) - \varphi(x)}{h}\ \mu(dx, du) \geq -C\kappa\left(\eps + \frac{\omega(|h|)}{h} + \frac{1}{R+1} \right).
\end{equation*}
Next, fix $\sigma \geq 0$ and let $R$ be such that $\frac{1}{R+1} \leq \sigma$, let $\eps \leq \sigma$. Then, 
\begin{equation*}
\int_{\T^{d} \times B_{R}} \frac{\varphi(x+hF(x)u) - \varphi(x)}{h}\ \mu(dx, du) \geq 2\sigma + o(|h|).
\end{equation*}
The conclusion follows as $h \to 0^{+}$ and since $\sigma$ is arbitrary. 
\qed

Next, we recall the construction of measures $\mu^{T}$ for $T > 0$ and, as before, we proceed to show that, up to subsequence, such measures weakly-$*$ converges to an $F$-strongly closed measure. 
Given $x_{0} \in \T^{d}$, for any $T > 0$ let the pair $(\gamma_{x_{0}}, u_{x_{0}}) \in \Gamma_{0,T}^{x_{0} \to}$ be optimal for \eqref{eq:evoValue}. Define the probability measure $\mu^{T}_{x_{0}}$ by 
\begin{equation}\label{eq:uniformmeasure_2}
\int_{\T^{d} \times \R^{m}}{\varphi(x,u)\ \mu^{T}_{x_{0}}(dx,du)}= \frac{1}{T}\int_{0}^{T}{\varphi(\gamma_{x_{0}}(t), u_{x_{0}}(t))\ dt}, \quad \forall\ \varphi \in C_{b}(\T^{d} \times \R^{m}).
\end{equation}
Then, we have the following.  
 
\begin{proposition}
The sequence $\{ \mu^{T}_{x_{0}}\}_{T>0}$ is tight and there exists a sequence $T_{n} \to \infty$ such that $\mu^{T_{n}}_{x_{0}}$ weakly-$^*$ converges to an $F$-strongly closed measure $\mu^{\infty}_{x_{0}}$.  
\end{proposition}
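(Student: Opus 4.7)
The plan is to follow the proof of \Cref{prop:nonempty} for tightness and weak-$*$ subsequential convergence, and then promote the closure condition from $C^1$ to semiconcave test functions by means of a semiconcavity sandwich, exactly as in the compactness argument for $\C^{S}_{F}(\kappa)$ established above.

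Tightness of $\{\mu^{T}_{x_0}\}_{T>0}$ is obtained verbatim from \Cref{prop:nonempty}: the spatial marginals are supported on the compact torus, while the coercivity in \textbf{(L)} combined with the trivial upper bound $V^T(x_0)/T \leq \|L(\cdot, 0)\|_\infty$ yields the uniform $\sigma$-moment estimate $\int |u|^\sigma\, d\mu^T_{x_0} \leq \kappa_0$, where $\kappa_0 = (\|L(\cdot, 0)\|_\infty + K_2)/K_1$. Hence $\{\mu^T_{x_0}\}$ is tight by \cite[Theorem 5.2.2]{bib:AGS}, and Prokhorov's theorem provides $T_n \to \infty$ and $\mu^{\infty}_{x_0} \in \PP^{\sigma}_{\kappa_0}(\T^d \times \R^m)$ with $\mu^{T_n}_{x_0} \rightharpoonup^\ast \mu^{\infty}_{x_0}$.

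To verify that $\mu^{\infty}_{x_0}$ is strongly $F$-closed, fix $\varphi \in \SC(\T^d)$. Since $\varphi$ is Lipschitz (as a semiconcave function on a compact manifold) and $\dot\gamma_{x_0}(t) = F(\gamma_{x_0}(t)) u_{x_0}(t)$, the chain rule for semiconcave functions along AC curves---which follows from the semiconcavity inequality together with the upper semicontinuity of $D^{+}\varphi$---gives $(\varphi \circ \gamma_{x_0})'(t) = \partial_F \varphi(\gamma_{x_0}(t), u_{x_0}(t))$ for a.e. $t$, so that by direct computation
\begin{equation*}
\int_{\T^d \times \R^m} \partial_F \varphi(x, u)\, \mu^{T_n}_{x_0}(dx, du) = \frac{\varphi(\gamma_{x_0}(T_n)) - \varphi(x_0)}{T_n} \to 0 \quad \textrm{as } n \to \infty,
\end{equation*}
by boundedness of $\varphi$ on $\T^d$.

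The main obstacle is that $\partial_F \varphi$ is merely Borel, so weak-$*$ convergence does not directly identify the limit of the left-hand side with $\int \partial_F \varphi\, d\mu^{\infty}_{x_0}$. I would overcome this exactly as in the proof of the compactness of $\C^{S}_{F}(\kappa)$: introduce the continuous family $\Psi^{h}(x, u) := h^{-1}\bigl(\varphi(x + h F(x) u) - \varphi(x)\bigr)$, which has linear growth in $u$ and converges pointwise to $\partial_F \varphi$ as $h \to 0^{+}$. Semiconcavity of $\varphi$ yields the sandwich $\partial_F \varphi - E_h \leq \Psi^h \leq \partial_F \varphi + E_h$, with error $E_h(x, u) = O(\omega(h|u|)|u|)$; the lower inequality requires the upper semicontinuity of $D^{+}_{F}\varphi$ to transport superdifferentials from $x + hF(x)u$ back to $x$. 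For each fixed $h$, weak-$*$ convergence together with the uniform $\sigma$-moment bound (which allows sub-$\sigma$ growth of test functions) gives $\int \Psi^h\, d\mu^{T_n}_{x_0} \to \int \Psi^h\, d\mu^{\infty}_{x_0}$; splitting the integration over $\{|u| \leq R\}$ and $\{|u| > R\}$---with the tail controlled by Chebyshev using $\sigma > 1$ and the bounded-velocity part by $R\,\omega(hR)$---shows that $\int E_h\, d\mu^{T_n}_{x_0}$ is small uniformly in $n$ as $h \to 0^{+}$. Combining these ingredients forces $\int \partial_F \varphi\, d\mu^{\infty}_{x_0} = 0$, establishing the strongly $F$-closed property.
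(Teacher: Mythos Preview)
Your proof is correct and follows the paper's outline for tightness and subsequential convergence. The paper also reduces the closure condition to showing that $\int \partial_F\varphi\,d\mu^{T_k}_{x_0}\to 0$; it obtains this by writing the integrand as a difference quotient, splitting it as $[\varphi(\gamma(t{+}h))-\varphi(\gamma(t))]/h$ plus a Lipschitz remainder $[\varphi(\gamma(t)+hF(\gamma(t))u(t))-\varphi(\gamma(t{+}h))]/h$, and telescoping the first piece to a bound of size $2\|\varphi\|_\infty/T_k$. Your route via the chain rule for semiconcave functions along AC curves yields the same identity $\int \partial_F\varphi\,d\mu^{T_n}_{x_0}=[\varphi(\gamma_{x_0}(T_n))-\varphi(x_0)]/T_n$ more directly.

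The genuine difference is in the final step. The paper simply writes ``Thus, as $k\uparrow\infty$ we conclude $\int \partial_F\varphi\,d\mu^\infty_{x_0}=0$'', without addressing why the limit of $\int\partial_F\varphi\,d\mu^{T_k}_{x_0}$ should coincide with $\int\partial_F\varphi\,d\mu^\infty_{x_0}$; since $\partial_F\varphi$ is merely Borel, weak-$*$ convergence does not give this for free. Your explicit sandwich via the continuous surrogates $\Psi^h$, combined with the uniform $\sigma$-moment bound and the truncation in $|u|$, is exactly the argument from the compactness lemma for $\C^S_F(\kappa)$ and is what is needed to make this passage rigorous. So your argument is the paper's argument made complete on the one point it leaves implicit.
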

\proof

Proceeding as in \Cref{prop:nonempty} one immediately gets that there exists a subsequence $\{T_n\}_{n \in \N}$ and a probability measure $\mu^{\infty}_{x_{0}}$ such that $\mu^{T_n}_{x_{0}}$ weakly-$*$ converges to $\mu^{\infty}_{x_{0}}$. 
Let us show that $\mu^{\infty}_{x_0}$ is $F$-strongly closed, i.e., we have to show that
\begin{equation*}
\int_{\T^{d} \times \R^{m}} \partial_{F} \varphi(x, u)\ \mu^{\infty}_{x_0}(dx, du) = 0, \quad \forall\ \varphi \in \SC(\T^{d}). 
\end{equation*}
Fix $\varphi \in \SC(\T^{d})$. Then, 
\begin{align*}
\begin{split}
& \int_{\T^{d} \times \R^{m}} \partial_{F} \varphi(x, u)\ \mu^{T_{k}}_{x_0}(dx, du) 
\\
=\ & \frac{1}{T_{k}}\int_{0}^{T_k} \lim_{h \to 0^{+}} \frac{\varphi(\gamma(t) + hF(\gamma(t))u(t)) - \varphi(\gamma(t))}{h}\ dt
\\
=\ &  \lim_{h \to 0^{+}} \frac{1}{T_{k}}\int_{0}^{T_k} \left[\frac{\varphi(\gamma(t+h)) - \varphi(\gamma(t))}{h} +\frac{\varphi(\gamma(t) + hF(\gamma(t))u(t)) - \varphi(\gamma(t+h))}{h}  \right]\ dt.
\end{split}
\end{align*}
Recalling that since $\varphi \in \SC(\T^{d})$ then $\varphi \in W^{1,\infty}(\T^{d})$, we immediately get
\begin{align*}
\begin{split}
& \left|\frac{\varphi(\gamma(t) + hF(\gamma(t))u(t)) - \varphi(\gamma(t+h))}{h}\right| 
\\
\leq\ & \frac{\|D\varphi\|_{\infty} |\gamma(t)+hF(\gamma(t))u(t) - \gamma(t+h)|}{h} \leq \|D\varphi\|_{\infty} o(|h|).
\end{split}
\end{align*}
On the other hand, 
\begin{align*}
 & \frac{1}{T_{k}}\int_{0}^{T_k} \frac{\varphi(\gamma(t+h)) - \varphi(\gamma(t))}{h} = \frac{1}{T_k} \left[\int_{h}^{T_{k}+h} \frac{\varphi(\gamma(t))}{h}\ dt - \int_{0}^{T_k} \frac{\varphi(\gamma(t)}{h}\ dt \right]
 \\
 =\ & \frac{1}{T_k} \int_{0}^{h} \frac{\varphi(\gamma(t))}{h}\ dt - \frac{1}{T_k} \int_{T_k}^{T_{k}+h} \frac{\varphi(\gamma(t))}{h}\ dt  \leq \frac{2}{T_{k}}\| \varphi\|_{\infty}.
\end{align*}
Thus, as $k \uparrow \infty$ we conclude
\begin{equation*}
\int_{\T^{d} \times \R^{m}} \partial_{F} \varphi(x, u)\ \mu^{\infty}_{x_0}(dx, du) = 0, \quad \forall\ \varphi \in \SC(\T^{d}). \eqno\square
\end{equation*}

\begin{theorem}\label{thm:main0}
We have that
\begin{equation*}
\mane  = \min_{\mu \in \C^{S}_{F}} \int_{\T^{d} \times \R^{m}}{L(x,u)\ \mu(dx,du)}. 
\end{equation*}
\end{theorem}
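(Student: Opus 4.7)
The plan is to parallel the proof of \Cref{thm:main1}, using one direction from a monotonicity (coming from an inclusion of measure classes) and the other from the explicit construction of a strongly $F$-closed minimizer furnished by the proposition just above.

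First I would show $\C^S_F \subset \C_F$. For $\mu \in \C^S_F$ and any $\varphi \in C^1(\T^d)$, I would test the strongly $F$-closed condition against the mollifications $\varphi_\eps = \varphi * \xi_\eps$, which belong to $C^\infty(\T^d) \subset \SC(\T^d)$ and for which the one-sided horizontal derivative reduces to $\partial_F \varphi_\eps(x,u) = \langle F^\star(x) D\varphi_\eps(x), u\rangle$. The passage to the limit $\eps \downarrow 0$ is justified by the uniform convergence $D\varphi_\eps \to D\varphi$ on $\T^d$ combined with the integrability of $|u|$ with respect to $\mu \in \PP^\sigma(\T^d \times \R^m)$, which follows from H\"older's inequality since $\sigma > 1$. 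Invoking \Cref{thm:main1} then gives immediately
\[
\inf_{\mu \in \C^S_F} \int_{\T^d \times \R^m} L(x,u)\ \mu(dx,du) \geq \mane.
\]

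For the reverse inequality I would use the proposition preceding this theorem, which produces an $F$-strongly closed weak-$*$ limit $\mu^\infty_{x_0}$ of the occupation measures $\mu^{T_n}_{x_0}$ built from optimal pairs for \eqref{eq:evoValue}. Applying \Cref{lem:weakconv} to $L + K_2 \geq 0$, together with $\mane = \lim_{T \to \infty} V^T(0)/T$ from \cite[Theorem 5.3]{Cannarsa_2022}, one obtains
\[
\mane = \lim_{n \to \infty} \int_{\T^d \times \R^m} L(x,u)\ \mu^{T_n}_{x_0}(dx,du) \geq \int_{\T^d \times \R^m} L(x,u)\ \mu^\infty_{x_0}(dx,du),
\]
exactly as in \eqref{eq:star}. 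Combining the two bounds proves equality and shows that the minimum is attained at $\mu^\infty_{x_0}$.

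The main delicate point is the inclusion $\C^S_F \subset \C_F$: since $C^1(\T^d) \not\subset \SC(\T^d)$ in general, this inclusion cannot be read off directly from the definitions, and the mollification argument above is needed to convert $C^1$ test functions into admissible semiconcave ones. The remainder of the argument is a direct transcription of the proof of \Cref{thm:main1}, with the strongly $F$-closed framework replacing the $C^1$-closed one; no new estimate on the Lagrangian or on the minimizing sequences is required.
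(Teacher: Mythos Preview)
Your proposal is correct and follows essentially the same two-step approach as the paper: the lower bound via the inclusion $\C^S_F \subset \C_F$ together with \Cref{thm:main1}, and the upper bound via the strongly $F$-closed limit measure $\mu^\infty_{x_0}$ and lower semicontinuity of the action. The only difference is that your mollification step for the inclusion is unnecessary: since the paper works with semiconcavity for a general modulus (see the proof of the compactness lemma for $\C^S_F(\kappa)$), one has $C^1(\T^d)\subset\SC(\T^d)$ directly---a $C^1$ function on a compact set has uniformly continuous gradient, hence is both semiconcave and semiconvex with that modulus---so for $\varphi\in C^1(\T^d)$ the identity $\partial_F\varphi(x,u)=\langle F^\star(x)D\varphi(x),u\rangle$ already gives $\C^S_F\subset\C_F$ ``by definition,'' which is exactly what the paper invokes.
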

\proof
First, by definition 
\begin{equation*}
\min_{\mu \in \C^{S}_{F}} \int_{\T^{d} \times \R^{m}}{L(x,u)\ \mu(dx,du)} \geq \mane.
\end{equation*}
On the other hand, 
 \begin{align*}
  \mane =  \lim_{T \to \infty}\int_{\T^{d} \times \R^{m}}{L(x,u)\ \mu^{T}_{0}(dx,du)} \geq \int_{\T^{d} \times \R^{m}}{L(x,u)\ \mu^{\infty}_{0}(dx,du)}
 \end{align*}
 and the conclusion follows taking the infimum over $\C^{S}_{F}$. 
\qed

\section{Mather set}\label{Mather_Set}

In this section we introduce the Mather set associated with a control system with nonholonomic constraints. We show that such a set is included into the projected Aubry set and, as a consequence, a fixed-point of the Lax-Oleinik semigroup is horizontally differentiable on the projected Mather set. Before doing this, we prove the existence of a strict critical subsolution (\Cref{strict_subsolution} below) and, then, the existence of a semiconcave strict critical subsolution which is, consequently, globally Lipschitz continuous w.r.t. the Euclidean distance. 
		
		\subsection{Lax-Oleinik semigroup and viscosity solutions}

		In this section we investigate the relation between the Lax-Oleinik semigroup and viscosity solutions to 
		\begin{equation*}
		\mane + H(x, D\chi(x)) = 0, \quad x \in \T^d. 
		\end{equation*}

		Before proving the main result of this section we need a technical lemma on the propagation of regularity along optimal pairs via the inclusion of the dual arc in the subdifferential of the value function. The proof of the following lemma can be obtained by adapting the arguments in the proof of \cite[Theorem 7.3.4]{bib:SC} to control-affine state equations. 
		
		\begin{lemma}\label{lem:propagation}
		Let $\varphi \in C(\T^d)$, let $(t, x) \in [0, \infty) \times \T^d$ and let $(\gamma, u) \in \Gamma_{0,t}^{\to x}$ be optimal for 
		\begin{equation*}
		v_{\varphi}(t, x) = T_t \varphi(x) = \inf_{(\gamma, u) \in \Gamma_{0,t}^{\to x}} \left\{\varphi(\gamma(0)) + \int_{0}^{t} L(\gamma(s), u(s))\ ds \right\}.
		\end{equation*}
		Let $p : [0, t] \to \R^d$ be the dual arc associated with the above problem and suppose that $p(t) \in D_{x}^{-} v_{\varphi}(t, x)$. Then, 
		\begin{equation*}
		p(s) \in D_{x}^{-} v_{\varphi}(s, \gamma(s)), \quad \forall\ s \in [0, t]. 
		\end{equation*}
		\end{lemma}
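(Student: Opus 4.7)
The plan is to adapt the classical propagation-of-subdifferential argument from \cite[Theorem 7.3.4]{bib:SC} to the control-affine setting. The two key ingredients are the dynamic programming principle for the Lax-Oleinik semigroup and the Pontryagin adjoint equation satisfied by the dual arc.

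Fix $s \in (0, t)$ and pick $y \in \T^d$ close to $\gamma(s)$. Define $\gamma_y$ as the trajectory driven by the same optimal control $u$ but issued from $y$ at time $s$, and set $x_y = \gamma_y(t)$. The dynamic programming identity at the optimal pair reads $v_\varphi(t, x) = v_\varphi(s, \gamma(s)) + \int_s^t L(\gamma(r), u(r))\, dr$, whereas pasting a nearly-optimal curve for $v_\varphi(s, y)$ on $[0, s]$ with $(\gamma_y, u)$ on $[s, t]$ gives the suboptimality estimate $v_\varphi(t, x_y) \leq v_\varphi(s, y) + \int_s^t L(\gamma_y(r), u(r))\, dr$. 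Combining these with the hypothesis $p(t) \in D_x^- v_\varphi(t, x)$, that is $v_\varphi(t, x_y) - v_\varphi(t, x) \geq \langle p(t), x_y - x \rangle + o(|x_y - x|)$, yields
\begin{equation*}
v_\varphi(s, y) - v_\varphi(s, \gamma(s)) \geq \int_s^t \bigl[L(\gamma, u) - L(\gamma_y, u)\bigr]\, dr + \langle p(t), x_y - x \rangle + o(|x_y - x|).
\end{equation*}

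Next, set $\eta(r) = \gamma_y(r) - \gamma(r)$. From $\dot\eta = [F(\gamma_y) - F(\gamma)] u$ and assumption \textbf{(F)}, a Gronwall argument shows that $\eta$ agrees, up to $o(|y - \gamma(s)|)$ uniformly on $[s,t]$, with the linearization $e$ solving $\dot e = [DF(\gamma)u]\, e$ with $e(s) = y - \gamma(s)$; in particular $x_y - x = e(t) + o(|y-\gamma(s)|)$. Taylor-expanding $L$ in the state variable (assumption \textbf{(L)}) we get $\int_s^t [L(\gamma_y, u) - L(\gamma, u)]\, dr = \int_s^t \langle L_x(\gamma, u), e \rangle\, dr + o(|y - \gamma(s)|)$. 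The Pontryagin adjoint equation for the dual arc, $\dot p = -[DF(\gamma)u]^\star p + L_x(\gamma, u)$, combined with the linearized dynamics of $e$, yields $\frac{d}{dr} \langle p(r), e(r) \rangle = \langle L_x(\gamma(r), u(r)), e(r) \rangle$, so integrating over $[s,t]$,
\begin{equation*}
\int_s^t \langle L_x(\gamma, u), e \rangle\, dr = \langle p(t), x_y - x \rangle - \langle p(s), y - \gamma(s) \rangle + o(|y - \gamma(s)|).
\end{equation*}
Substituting into the displayed inequality above, the terms $\langle p(t), x_y - x \rangle$ cancel and we conclude
\begin{equation*}
v_\varphi(s, y) - v_\varphi(s, \gamma(s)) \geq \langle p(s), y - \gamma(s) \rangle + o(|y - \gamma(s)|),
\end{equation*}
which is exactly the required inclusion $p(s) \in D_x^- v_\varphi(s, \gamma(s))$.

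The main technical obstacle is that the optimal control $u$ belongs only to $L^\sigma$ (and not to $L^\infty$), so both the linearization of the dynamics and the Taylor expansion of $L$ must be controlled in an integrated, rather than pointwise, sense. The $C^{1,1}$ regularity of $F$ from \textbf{(F)} and the $C^2$ regularity of $L$ from \textbf{(L)}, together with the integral Gronwall estimate $\|\eta\|_{\infty, [s,t]} \leq C\bigl(\|u\|_{L^\sigma[s,t]}\bigr)|y - \gamma(s)|$, are precisely what is needed to make the $o(|y - \gamma(s)|)$ remainders legitimate and uniform as $y \to \gamma(s)$. The control-affine structure in fact simplifies the Tonelli treatment of \cite{bib:SC} because $F$ is independent of $u$ and the $u$-dependence in the linearized flow is linear in $e$.
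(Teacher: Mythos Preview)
Your proposal is correct and follows exactly the approach indicated by the paper, which does not give a detailed proof but merely states that ``the proof \ldots\ can be obtained by adapting the arguments in the proof of \cite[Theorem 7.3.4]{bib:SC} to control-affine state equations.'' Your sketch carries out precisely that adaptation---dynamic programming plus the adjoint equation to propagate the subdifferential inclusion backward along the optimal trajectory---and in fact supplies more detail than the paper itself.
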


		\begin{remark}\label{continuity}\em
		Observe that \Cref{lem:propagation} still holds true replacing $D^{-}v_{\varphi}$ with $D^{*}v_{\varphi}$, where $D^*v_{\varphi}$ denotes the set of all limiting horizontal gradients of $v_{\varphi}$. This fact follows arguing as in \cite[Theorem 7.3.10]{bib:SC} and \cite{bib:CM}. 
		\end{remark}

		\begin{theorem}\label{global}
		A continuous function $v: \T^d \to \R$ is a viscosity solution to 
		\[
		\mane + H(x, Dv(x)) = 0 \quad (x \in \T^d)
		\]
		if and only if $v(x) = T_t v(x) - \mane t$ for any $t \geq 0$. 
		\end{theorem}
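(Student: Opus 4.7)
The plan is to recognize both sides of the claimed equivalence as two different ways of encoding the same Cauchy problem for the evolutionary Hamilton--Jacobi equation $\partial_s w + H(y, D_y w) = 0$ on $(0, \infty) \times \T^d$. As a preliminary fact I would record that, for every continuous initial datum $\phi$, the function $(s, y) \mapsto T_s \phi(y)$ is a viscosity solution of this evolutionary equation with initial value $\phi$; this is a standard dynamic programming computation, carried out directly from the Lax--Oleinik infimum formula using the trajectory regularity provided by {\bf (L)}.

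Granted this fact, the sufficiency direction is immediate. Assuming $v(x) = T_t v(x) - \mane t$ for every $t \geq 0$, I set $w(s, y) := v(y) + \mane s$; the hypothesis says exactly $w(s, \cdot) = T_s v$, so $w$ solves $\partial_s w + H(y, D_y w) = 0$ in the viscosity sense. Since $\partial_s w \equiv \mane$ and $D_y w = Dv$, this reduces to $\mane + H(x, Dv(x)) = 0$ in the viscosity sense on $\T^d$, as required.

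For the converse, I would observe that $\tilde w(s, y) := v(y) + \mane s$ is itself a viscosity solution of $\partial_s \tilde w + H(y, D_y \tilde w) = 0$ with initial datum $v$, so the identity $v = T_s v - \mane s$ is equivalent to uniqueness of the viscosity solution of this Cauchy problem. I would split uniqueness into two one-sided bounds. The inequality $v(x) \leq T_t v(x) - \mane t$ is equivalent, via \eqref{eq:subsol}, to the domination property $v \prec L - \mane$; this follows from the viscosity sub-solution property by smooth mollification (in the spirit of \Cref{lem:smoothapprox}), the Fenchel--Young inequality along horizontal curves, and a passage to the limit in the mollification parameter. For the reverse inequality $v(x) \geq T_t v(x) - \mane t$ I would fix $x \in \T^d$ and $t > 0$ and take an optimal pair $(\gamma^\star, u^\star) \in \Gamma_{0,t}^{\to x}$ for $T_t v(x)$; letting $p:[0,t] \to \R^d$ be the associated dual arc, the Pontryagin maximum principle places $p(t)$ in the relevant one-sided gradient of $T_t v$ at $x$, so by \Cref{lem:propagation} and its $D^*$-extension in \Cref{continuity} this inclusion propagates backward to $p(0)$ in the corresponding gradient of $v$ at $\gamma^\star(0)$. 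Combining the viscosity inequality for $v$ at $(\gamma^\star(0), p(0))$ with the Fenchel--Young equality along the optimal dual pair and integrating along $\gamma^\star$ then produces the calibration identity
\[
v(x) - v(\gamma^\star(0)) = \int_0^t L(\gamma^\star(s), u^\star(s))\, ds - \mane t,
\]
which is exactly the missing inequality.

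The main obstacle is this reverse inequality. Without Tonelli coercivity of $H$, a generic comparison theorem for the evolutionary Hamilton--Jacobi equation is not directly at our disposal in the sub-Riemannian setting, so the argument cannot appeal to uniqueness abstractly and must build the calibrating trajectory by hand; this is precisely the reason \Cref{lem:propagation} and its $D^*$-variant \Cref{continuity} are recorded immediately before the statement of the theorem.
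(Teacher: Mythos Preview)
Your forward direction and the domination half of the converse are correct and match the paper. For the supersolution half you assemble exactly the ingredients the paper uses: the dual arc $p$ of an optimal pair $(\gamma^\star,u^\star)$ for $T_t v(x)$, its backward propagation via \Cref{lem:propagation}/\Cref{continuity} to $p(0)$ in a generalized gradient of $v$ at $\gamma^\star(0)$, the viscosity supersolution inequality for $v$ there, and conservation of $H$ along $(\gamma^\star,p)$. The gap is in the final step, the ``integration along $\gamma^\star$''.

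Integrating the Fenchel--Young equality $\langle p(s),\dot\gamma^\star(s)\rangle = L(\gamma^\star(s),u^\star(s)) + H(\gamma^\star(s),p(s))$ over $[0,t]$ does \emph{not} produce $v(x)-v(\gamma^\star(0))$: the arc $p(s)$ lies in the (reachable) gradient of $\widetilde v(s,\cdot)=T_s v$, not of $v$ itself, so $\int_0^t\langle p,\dot\gamma^\star\rangle\,ds$ is not a telescoping difference of values of $v$. What this integration recovers is only $T_t v(x)-v(\gamma^\star(0))=\int_0^t L$, i.e.\ the optimality of $(\gamma^\star,u^\star)$, which is tautological. A related issue is that the Pontryagin principle places $p(t)$ in $D^+_x\widetilde v(t,x)$, whereas \Cref{lem:propagation} requires $D^-$ (and its $D^*$ variant requires $D^*\subset D^+$, not all of $D^+$); at a general $(t,x)$ these do not coincide.

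The paper closes the argument differently. It restricts to a point $(t,x)$ at which $\widetilde v$ is differentiable (such points are of full measure since $\widetilde v$ is semiconcave, hence Lipschitz); there $p(t)=D_x\widetilde v(t,x)\in D^-_x\widetilde v(t,x)$ and \Cref{lem:propagation} applies literally. Propagation plus the supersolution inequality for $v$ at $\gamma^\star(0)$ plus conservation of $H$ give $H(x,D_x\widetilde v(t,x))=H(\gamma^\star(0),p(0))\geq -\mane$; the evolutionary equation $\partial_t\widetilde v+H(x,D_x\widetilde v)=0$ then forces $\partial_t\widetilde v(t,x)\leq\mane$, and domination gives the reverse inequality. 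Hence $\partial_t\widetilde v=\mane$ a.e., and one integrates in $t$ (for each fixed $x$), not along $\gamma^\star$, to obtain $T_t v(x)=v(x)+\mane t$. Replacing your last step by this $t$-integration, after converting the Hamiltonian information into $\partial_t\widetilde v=\mane$ via the evolutionary equation, repairs the proof.
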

		\proof 
		If $v(x) = T_t v(x) - \mane t$, then the statement follows by the dynamic programming principle. 
		
		Let us prove the converse. To do so, let $v$ be a viscosity solution to 
		\begin{equation*}
		\mane + H(x, Dv(x)) = 0, \quad x \in \T^d
		\end{equation*}
		and define the function 
		\begin{equation*}
		\widetilde v(t, x) = T_t v(x), \quad \forall\, t \geq 0, \,\, \forall\, x \in \T^d. 
		\end{equation*}
		So, the proof is reduced to showing that $\widetilde v(t, x) = v(x) + \mane t$. From \Cref{lem:semiconcavity} we deduce that $\widetilde v(t, x)$ is semiconcave and, thus, Lipschitz continuous w.r.t. the Euclidean distance. This implies that it is enough to show that $\partial_t \widetilde v(t, x) = \mane $ at any point $(t, x)$ where $\widetilde v(t, x)$ admits a derivative. By the dynamic programming principle it follows that $\widetilde v$ satisfies 
		\begin{equation*}
		\partial_t \widetilde v(t, x)  + H(x, D\widetilde v(t, x)) = 0, \quad (t, x) \in [0, \infty) \times \T^d
		\end{equation*}
		in the viscosity sense. Hence, we have to show that 
		\begin{equation*}
		\mane + H(x, D\widetilde v(t, x)) = 0, \quad (t, x) \in [0, \infty) \times \T^d. 
		\end{equation*}
		Since $\widetilde v(t, x) \prec L - \mane$, we have that
		\begin{equation*}
		\mane +  H(x, D\widetilde v(t, x)) \leq 0. 
		\end{equation*}
		Next, we proceed to show that $\mane +  H(x, D\widetilde v(t, x)) \geq 0$ holds. Let $(t, x) \in (0, \infty) \times \T^d$ be a point of differentiability of $\widetilde v$ and let $(\gamma, u) \in \Gamma_{0,t}^{ \to x}$ satisfy
		\begin{equation*}
		T_t v(x) = v(\gamma(0)) + \int_{0}^{t} L(\gamma(s), u(s))\ ds. 
		\end{equation*}
		By the maximum principle and the properties of the Legendre transform we obtain 
		\begin{equation*}
		\partial_u L(x, u(t)) \in D^{+}_{F} \widetilde v(t, x)
		\end{equation*}
		and since $\widetilde v$ is differentiable at $(t, x)$ we get $D_F \widetilde v(t, x) = \partial_u L(x, u(t))$. Using the fact that
		\begin{equation*}
		H(\gamma(s), \partial_u L(\gamma(s), u(s)))
		\end{equation*}
		is constant we are reduced to show 
		\begin{equation*}
		\mane + H(\gamma(0), \partial_u L(\gamma(0), u(0))) \geq 0. 
		\end{equation*}
		The above inequality is a consequence of \Cref{lem:propagation}, since 
		\begin{equation*}
		\partial_u L(\gamma(0), u(0)) \in D^{-}v(\gamma(0))
		\end{equation*}
		and $v$ is a viscosity solution to $\mane + H(x, Dv(x)) = 0$. \qed

		\subsection{Strict critical subsolution}

		\begin{definition}[{\bf Strict critical subsolution}]\label{strict_subsolution}
		A subsolution $\chi: \T^{d} \to \R$ to the critical equation is said to be strict at $x \in \T^{d}$ if there exists an open subset $U \subset \T^{d}$ and $c > \mane$ such that $x \in U$, and $\chi_U=\chi|U$ is a subsolution of $c + H(x, D \chi_U)=0$.  
		\end{definition}

		%In order to prove the existence of a strict critical subsolution we need that the function 
		%\begin{equation}\label{potential}
	%	\phi_x(y) = \inf_{t \geq 0} [A_t(x, y) - \mane t].
	%	\end{equation}
	%	is global viscosity solution if and only if $x \in \A$. 
		
		Before proving the existence of a strict critical subsolution to the critical equation we provide a preliminary result on the, so called, Ma\~n\'e potential function. We omit the proof which is similar to the one of \cite[Proposition 4.2]{bib:FAS}.

		\begin{proposition}
		Let $x \in \T^d$ and let $\phi_x$ be function defined by 
		\begin{equation}\label{potential}
	\phi_x(y) = \inf_{t \geq 0} [A_t(x, y) - \mane t].
	\end{equation}
 Then, $\phi_x(\cdot)$ is a viscosity subsolution to the critical equation on $\T^d$. Moreover, $\phi_x(\cdot)$ is a viscosity solution to the critical equation on $\T^d$ if and only if $x \in \A$.  
		\end{proposition}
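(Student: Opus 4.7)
My plan is to split the argument into three parts: first I would establish the viscosity subsolution property of $\phi_x$ on the whole of $\T^d$, then I would prove the two directions of the ``solution iff $x\in\A$'' characterisation separately.

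For the subsolution part I would verify the domination $\phi_x\prec L-\mane$, from which the viscosity subsolution property follows as in \Cref{rem:peierlsubsolution}. Given $[a,b]\subset\R$, $(\gamma,u)\in\Gamma_{a,b}^{\gamma(a)\to\gamma(b)}$ and $\eps>0$, pick $s\ge 0$ and $(\eta,v)\in\Gamma_{0,s}^{x\to\gamma(a)}$ with $\int_{0}^{s}L(\eta,v)\,dr-\mane s\le\phi_x(\gamma(a))+\eps$. Concatenating $(\eta,v)$ with a translate of $(\gamma,u)$ produces an admissible pair in $\Gamma_{0,s+(b-a)}^{x\to\gamma(b)}$, whose action upper-bounds $\phi_x(\gamma(b))$ through the defining infimum; letting $\eps\downarrow 0$ yields the desired inequality.

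For the ``if'' direction I would show that when $x\in\A$ one actually has $\phi_x=\PB(x,\cdot)$, so the solution property follows from \Cref{cor:viscoHJ}. The bound $\phi_x\le\PB(x,\cdot)$ is immediate from the definitions. For the reverse, from $x\in\A$ extract $T_n\uparrow\infty$ and loops $(\eta_n,w_n)\in\Gamma_{0,T_n}^{x\to x}$ with $\int_{0}^{T_n}L(\eta_n,w_n)\,dr-\mane T_n\to 0$. Concatenating such a loop with an $\eps$-optimal pair in $\Gamma_{0,t}^{x\to y}$ (for $y\in\T^d$ and $t\ge 0$ arbitrary) gives
\[
A_{T_n+t}(x,y)-\mane(T_n+t)\le \Big(\int_{0}^{T_n}L(\eta_n,w_n)\,dr-\mane T_n\Big)+A_t(x,y)-\mane t+\eps,
\]
and passing to the liminf in $n$, the infimum in $t\ge 0$, and then $\eps\downarrow 0$ yields $\PB(x,y)\le\phi_x(y)$.

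For the ``only if'' direction, assume $\phi_x$ is a viscosity solution. By \Cref{global} it is a fixed-point of $T_t$, so the diagonal extraction used to prove \Cref{prop:viscositybarrier}, applied to $\phi_x$ in place of $\PB(x_0,\cdot)$ (the ingredients needed are only the representation formula \eqref{eq:corrector2} and compactness of $\T^d$), produces $(\gamma,u)\in\Gamma_{-\infty,0}^{\to x}$ calibrated for $\phi_x$ on every $[-t,0]$. One checks that $\phi_x(x)=0$: the upper bound is immediate by taking $t=0$ in the defining infimum, and the lower bound follows by applying the already-proved domination $\phi_x\prec L-\mane$ to an arbitrary admissible loop at $x$. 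Calibration therefore reads $\phi_x(\gamma(-t))=-\int_{-t}^{0}L(\gamma,u)\,dr+\mane t$ for every $t>0$. For each $\eps>0$ the definition of $\phi_x(\gamma(-t))$ supplies $s\ge 0$ and $(\eta,v)\in\Gamma_{0,s}^{x\to\gamma(-t)}$ with $\int_{0}^{s}L(\eta,v)\,dr\le\mane s+\phi_x(\gamma(-t))+\eps$; concatenating $(\eta,v)$ with the calibrated piece $\gamma|_{[-t,0]}$ produces a loop in $\Gamma_{0,s+t}^{x\to x}$ of total action at most $\mane(s+t)+\eps$. Letting $t\to\infty$ (whence $s+t\to\infty$) and then $\eps\downarrow 0$ yields $\PB(x,x)\le 0$; combined with the standard lower bound $\PB(x,x)\ge 0$ this gives $x\in\A$.

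The main obstacle I anticipate is the ``only if'' direction: extending \Cref{prop:viscositybarrier} so that it produces a calibrated curve for the abstract fixed-point $\phi_x$ of $T_t$, and carefully gluing the $\eps$-optimal trajectory with the calibrated piece into a genuine admissible pair for the sub-Riemannian state equation \eqref{eq:dynamics}, with the time-shifts handled so that endpoints match exactly.
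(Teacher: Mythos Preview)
The paper omits the proof entirely, deferring to \cite[Proposition~4.2]{bib:FAS}; your argument is correct and is precisely the adaptation of that Fathi--Siconolfi scheme to the present sub-Riemannian setting, using tools the paper has already assembled (domination for the subsolution part, the identification $\phi_x=\PB(x,\cdot)$ together with \Cref{cor:viscoHJ} for the ``if'' direction, and \Cref{global} plus a calibrated-curve and loop construction for the ``only if'' direction). The obstacle you flag---producing a backward calibrated curve for an arbitrary fixed point of $T_t$ rather than for $\PB(x_0,\cdot)$---is handled exactly as you outline, by iterating \eqref{eq:corrector2} to obtain minimizing pairs $(\gamma_n,u_n)\in\Gamma_{-n,0}^{\to x}$ with $\phi_x(x)-\phi_x(\gamma_n(-n))=\int_{-n}^{0}L-\mane n$ bounded, and then extracting a limit via the compactness argument of \Cref{lem:barrierconvergence}; the subsequent gluing into loops at $x$ goes through without further difficulty.
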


		\begin{remark}\label{rem1}\em
		Since, for any $x \in \T^d$, $\phi_{x}$ is a subsolution to the critical equation 
		\begin{equation*}
		\mane + H(y, D\phi_{x}(y)) = 0
		\end{equation*} 
		on $\T^{d}$, by appealing to \Cref{lem:equilip} we conclude that $\phi_{x}(\cdot)$ is locally Lipschitz continuous uniformly w.r.t. $x \in \T^d$. 
		\end{remark}
		
		Next, we prove the existence of a strict critical subsolution to the equation 
		\begin{equation*}
		\mane + H(x, D\chi(x)) = 0, \quad x \in \T^d. 
		\end{equation*}
		
		\begin{proposition}\label{strict}
		There exists a global critical subsolution which is strict at each point of $\T^{d} \backslash \A$. 
		\end{proposition}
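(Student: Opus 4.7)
The plan is to construct the required subsolution as a countable convex combination of Ma\~n\'e potentials, following the classical Fathi--Siconolfi strategy adapted to the present sub-Riemannian setting. Three ingredients already in place will do most of the work: the preceding proposition (which characterizes $\A$ as exactly the set of $x$ for which $\phi_x$ is a critical viscosity solution, not merely a subsolution); the uniform Lipschitz estimate for the family $\{\phi_x\}_{x\in\T^d}$ from Remark \ref{rem1}; and the convexity of $H$ in the second variable, inherited from convexity of $L$ in $u$ via \eqref{eq:starhj}.

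First I would prove the local version: for each $x_0 \in \T^d\setminus\A$, produce a critical subsolution $\psi_{x_0}$ and an open neighborhood $V_{x_0} \ni x_0$ on which $\psi_{x_0}$ is a viscosity subsolution of $c_{x_0} + H(y, Dv(y)) = 0$ for some $c_{x_0} > \mane$. The natural candidate is $\phi_{x_0}$ itself: by the preceding proposition $\phi_{x_0}$ is a subsolution but not a solution, so the supersolution inequality must fail at some point, and a standard viscosity argument using continuity of $H$ and convexity in $p$ propagates this defect to a full open set of strictness. To locate this set near $x_0$ I would combine $\phi_{x_0}$ with the dual (``backward'') potential $\phi^{-}_{x_0}(y) = \inf_{t \geq 0}[A_t(y, x_0) - \mane t]$, exploiting the fact that since $x_0 \notin \A$ no calibrated curve for $\phi_{x_0}$ can close up at $x_0$, so any asymptotically optimal trajectory must leave a fixed neighborhood of $x_0$, producing a uniform defect in the Hamilton--Jacobi inequality there.

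Second, I would globalize the construction. Since $\A$ is closed by Proposition \ref{prop:closedaubry}, the complement $\T^d \setminus \A$ is open and second countable, so the cover $\{V_{x_0}\}_{x_0 \notin \A}$ admits a countable subcover $\{V_{x_n}\}_{n \in \N}$ with associated subsolutions $\psi_{x_n}$ and constants $c_n > \mane$. After normalizing each $\psi_{x_n}$ at a fixed base point, Remark \ref{rem1} gives a uniform sup-bound, so for any positive weights $\lambda_n$ with $\sum_n \lambda_n = 1$ the series
\begin{equation*}
\chi(y) = \sum_{n \in \N} \lambda_n \psi_{x_n}(y)
\end{equation*}
converges uniformly to a Lipschitz function on $\T^d$. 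At any Rademacher differentiability point, convexity of $H(y,\cdot)$ gives
\begin{equation*}
H(y, D\chi(y)) \leq \sum_{n \in \N} \lambda_n H(y, D\psi_{x_n}(y)) \leq -\mane,
\end{equation*}
with a strict gain of at least $\lambda_n(c_n - \mane)$ whenever $y \in V_{x_n}$. A standard a.e.-to-viscosity upgrade, valid thanks to convexity of $H$ in $p$, then yields that $\chi$ is a global critical subsolution which is strict on a neighborhood of every $y \in \T^d \setminus \A$.

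The main obstacle I expect is the first step, and specifically the requirement of placing the strict-subsolution region near $x_0$ itself. A purely viscosity-theoretic argument only produces strictness near some point where the supersolution defect occurs, which need not be close to $x_0$; I expect the resolution to require the calibration machinery from Proposition \ref{prop:viscositybarrier} together with the non-closure of calibrated curves at points of $\T^d \setminus \A$ to transport strictness back to a neighborhood of $x_0$.
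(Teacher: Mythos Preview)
Your overall architecture---Ma\~n\'e potentials, local strict subsolution near each $x_0\notin\A$, countable convex combination---is exactly the paper's. But the obstacle you identify in your final paragraph is not real, and your proposed resolution via backward potentials and calibration machinery is unnecessary.

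The point you are missing is that the Ma\~n\'e potential $\phi_{x_0}$ is a viscosity \emph{solution} on $\T^d\setminus\{x_0\}$, not merely a subsolution; the only place the supersolution inequality can fail is at the base point $x_0$ itself. (This is the standard statement for Ma\~n\'e potentials, implicit in the preceding proposition and explicit in Fathi--Siconolfi; the paper's proof invokes it directly.) So when $x_0\notin\A$, the supersolution defect is automatically located at $x_0$, and there is nothing to ``transport back''.

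What you are actually missing is the local modification step. The failure of the supersolution condition at $x_0$ does \emph{not} make $\phi_{x_0}$ a strict subsolution there: it says there is some $p\in D^{-}\phi_{x_0}(x_0)$ with $\mane+H(x_0,p)<0$, but $D^{+}\phi_{x_0}(x_0)$ may still contain vectors on the critical level, so $\phi_{x_0}$ need not be strict on any open set. The paper fixes this by taking the $C^1$ test function $\theta$ touching $\phi_{x_0}$ from below at $x_0$ with $\mane+H(x_0,D\theta(x_0))<0$, choosing a small neighborhood $\mathcal W\ni x_0$ and $c_{x_0}>\mane$ on which $c_{x_0}+H(y,D\theta(y))<0$ by continuity, and then setting $v_{x_0}=\max\{\phi_{x_0},\theta+\eps\}$ for $\eps>0$ small enough that $v_{x_0}=\phi_{x_0}$ near $\partial\mathcal W$. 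This $v_{x_0}$ is a global critical subsolution equal to the smooth strict subsolution $\theta+\eps$ on a neighborhood $U_{x_0}$ of $x_0$. Your phrase ``a standard viscosity argument\ldots propagates this defect to a full open set of strictness'' does not capture this construction: the defect is on the supersolution side, and convexity in $p$ alone does not convert it into strictness of the subsolution inequality. Once you insert this max-with-test-function step, the rest of your globalization argument goes through as written.
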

		\proof 
		We first show that, given $x \in \T^{d} \backslash \A$, we can construct a global critical subsolution $\chi_x: \T^{d} \to \R$ which is strict on an open set $U_x$ containing $x$.

		Let $\phi^{x}: \T^{d} \to \R$ be defined as in \eqref{potential}. Appealing to \Cref{rem1} we have that such a function is a global subsolution to the critical equation and a solution to the critical equation on $\T^{d} \backslash \{x_{0}\}$, that is, the viscosity supersolution  condition must be violated at $x$. So, there exists an horizontally differentiable function $\theta: V \to \R$ defined on a open neighborhood $V$ of $x$ such that 
		\begin{equation*}
		\theta(y) < \phi^{x}(y) \,\, \text{for}\,\, y \in V, \quad \theta(x)=\phi^{x}(x), \quad \text{and} \quad \mane+ H(x, D\theta(x)) <0 .
		\end{equation*}
		Hence, we can find a constant $c_x > \mane $ and a open neighbourhood $\mathcal{W}$ of $x$ whose closure is compact and contained in $V$ such that 
		\[
		c_x + H(y, D\theta(y)) < 0 \quad (y \in \overline\W).
		\]

		Next, let $\eps > 0$ be such that $\phi^{x} (y) > \theta (y) + \eps $ for $y \in \overline\W \backslash \W$ and define 
		\begin{equation*}
		v_{x}(y) = \max \{\phi^{x}(y), \theta(y) + \eps\}, \quad y \in \overline\W.
		\end{equation*}
		Note that, by the choice of $\eps$ we have that $\phi^{x}(y) = \phi^x (y)$ on $\overline\W \backslash \W$ and so the function $v_x$ can be continuously extended on $\T^d$. Moreover, observe that $\theta(x) + \eps > \phi^{x}(x) = \theta(x)$ which implies that there exists an open neighbourhood $U_x$ of $x$ such that  $\phi^x (y) = \theta(y) + \eps$, and for any $y \in U_x$, we have that $c_x + H(y, D \phi^x(y)) = c_x + H(y, D\theta(y)) < 0$. Thus, we deduce that $\phi^{x}_{|U_{x}}$ is dominated by $L-c_x$ on $U_{x}$.

By definition, $\phi^{x}(x)=0$. So we have that
		\begin{equation}\label{equi-bound}
		\sup\{|\phi^{x} (y)| : x \in \T^d \backslash \A,\,\, y \in K \} < \infty
		\end{equation}
		for each compact subset $K$ of $\T^d$. 
		Since the set $\T^d \backslash \A$ is covered by the open sets $U_x$ with $x \in \T^d \backslash \A$, we can find a countable subcover  $\{ U_{x_{n}}\}_{n \in \N}$. Define the function 
		\begin{equation*}
		v(y) = \sum_{k=1}^{\infty} \frac{1}{2^k} \phi_{x_{k+1}}(y).
		\end{equation*}
		From \eqref{equi-bound} we have that $\sum_{k=1}^{\infty} \frac{1}{2^k} \phi_{x_{k+1}}(y)$ uniformly converges on all compact subsets of $\T^d$. Moreover, we have that $v$ is an infinite convex combination of $\phi^{x_{k}}$. Hence, we get that $v$ is dominated by $L-\mane$ since each $\phi^{x_{k}} \prec L-\mane$ and $v_{| U_{x_{k}}}$ is dominated by $L-c$ with 
		\begin{equation*}
		c:=\frac{c_{x_k}}{2^{k+1}} + \sum_{m \not=k} \frac{\mane}{2^{m+1}}< \mane.
		\end{equation*}
	This yields the fact that $v$ is a strict subsolution of the critical equation. \qed

\subsection{Analysis of the Mather set}
In order to prove the main result of this section, namely \Cref{inclusion}, we need to use a strict critical subsolution as a test function for closed measures. For this, we have to construct a semiconcave subsolution, which is strict at each point not in the projected Aubry set. This will be done by showing that, given any strict critical subsolution, for any $t > 0$, the function $T_t v: \T^d \to \R$ is also a strict critical subsolution.

Before doing so, let us recall that the Lax-Oleinik semigroup is defined on the set of dominated functions $\D_H$ by the formula
\begin{equation*}
T_{t} \varphi(x) = \inf_{(\gamma, u) \in \Gamma_{0, t}^{\to x}} \left\{\varphi(\gamma(0)) + \int_{0}^{t} L(\gamma(s), u(s))\ ds \right\} \quad (\varphi \in \D_H, \; x \in \T^d).
\end{equation*}
Let us introduce the set 
\begin{equation*}
\I(\varphi) = \{x \in \T^{d} : \exists\ (\gamma, u) \in \Gamma_{-\infty, 0}^{\to x} \cap \Gamma_{0, \infty}^{x \to} \,\, \text{s.t.}\,\, \gamma \,\,\text{is calibrated for}\,\, \varphi\}. 
\end{equation*}

The proof of the following Lemma, as well as the one of the next Theorem, can be recovered arguing as in \cite{bib:FAS}, Proposition 6.2 and Lemma 6.3 respectively. 

\begin{lemma}\label{lem1}
The following facts hold true. 
\begin{itemize}
\item[($i$)] If $v$ is a subsolution of the critical equation then, for each $t > 0$, the function $T_{t}v$ is also a subsolution of the critical equation. Moreover,
\[
\I(T_{t}v)=\I(v), \quad \textrm{and} \quad v = T_{t}v -\mane t \quad \textrm{ on }\;\; \I(T_{t}v)=\I(v)\; \forall\; t > 0.
\]
\item[($ii$)] If $t > 0$, $x \in \T^{d} \backslash \I(v)$, and $\gamma: [0,t] \to \T^{d}$ are such that $x=\gamma(t)$, and 
\[
T_{t}v(x)= v(\gamma(0)) + \int_{0}^{t} L(\gamma(s), u(s))\ ds,
\]
then, $\gamma\left([0,t]\right) \subset \T^{d} \backslash \I(v)$. 
\end{itemize}
\end{lemma}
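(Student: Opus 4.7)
I plan to follow the Fathi--Siconolfi template cited by the authors, leveraging three building blocks already established in the paper: the basic domination $\varphi \prec L-\mane$, the dynamic programming principle for $T_t$, and the horizontal feedback representation of calibrated controls from \Cref{cor:feedback}. The key baseline observation for (i) is the pointwise bound $T_t v(x) \geq v(x) + \mane t$, which follows by applying the domination property of $v$ to any pair $(\gamma,u) \in \Gamma_{0,t}^{\to x}$ and passing to the infimum in the definition of $T_t v(x)$.

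For (i), I would first verify $T_t v \prec L - \mane$: given $(\eta,w) \in \Gamma_{a,b}^{\eta(a)\to\eta(b)}$ and an $\varepsilon$-minimizer $(\gamma,u) \in \Gamma_{0,t}^{\to \eta(a)}$ for $T_t v(\eta(a))$, concatenate a time-shifted copy of $\gamma$ with $\eta$, truncate to an admissible pair on $[0,t]$ ending at $\eta(b)$, invoke the domination of $v$ on the portion that gets dropped, and send $\varepsilon\downarrow 0$. The equality $v = T_t v - \mane t$ on $\I(v)$ is obtained by plugging the restriction $\beta|_{[-t,0]}$ of a doubly infinite calibrated curve through $x$ into the infimum defining $T_t v(x)$: the calibration identity turns this competitor into an equality saturating the baseline bound. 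For $\I(v) \subseteq \I(T_t v)$ substitute this pointwise equality, valid along the image of any doubly infinite calibrated curve, into the defining identity of calibration to see that the same curve is also calibrated for $T_t v$. For the reverse inclusion, start from $\beta$ calibrated for $T_t v$ through $x$ and combine the upper bound $T_t v(\beta(s)) \leq v(\beta(s-t)) + \int_{s-t}^{s} L(\beta,w_\beta)\,dr$ (obtained by feeding $\beta|_{[s-t,s]}$ into the infimum) with the calibration identity for $T_t v$ to force $v = T_t v - \mane t$ along the image of $\beta$, which promotes $\beta$ to a calibrated curve for $v$ and places $x \in \I(v)$.

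For (ii), I argue by contrapositive: assume $\gamma(\tau_0) \in \I(v)$ for some $\tau_0 \in [0,t]$ and derive $x \in \I(v)$. The DPP decomposition at $\tau_0$, combined with the optimality of $\gamma$ for $T_t v(x)$ and the equality $T_{\tau_0} v(\gamma(\tau_0)) = v(\gamma(\tau_0)) + \mane \tau_0$ from (i), forces $\gamma|_{[0,\tau_0]}$ to be calibrated for $v$. Let $\beta : \R \to \T^d$ be a doubly infinite calibrated curve for $v$ with $\beta(0) = \gamma(\tau_0)$; its forward branch $\beta|_{[0,\infty)}$ is a second calibrated continuation of $\gamma|_{[0,\tau_0]}$ past time $\tau_0$. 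Arguing as in \Cref{prop:superdiff}--\Cref{prop:differentiability} to obtain horizontal differentiability of $v$ at $\gamma(\tau_0)$ along the range of $F(\gamma(\tau_0))$, and appealing to \Cref{cor:feedback}, both $\gamma$ and $\beta$ satisfy the same Hamiltonian feedback $u(s)=D_p L^{*}(\cdot,D_F v(\cdot))$ issuing from $\gamma(\tau_0)$; by strict convexity of $L$ in $u$ and assumption \textbf{(S)}, the resulting Cauchy problem admits a unique normal extremal, so $\gamma|_{[\tau_0,t]} \equiv \beta|_{[0,t-\tau_0]}$. Consequently $x = \beta(t-\tau_0)$ lies on the doubly infinite curve $\beta$, i.e.\ $x \in \I(v)$, contradicting the hypothesis.

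The main obstacle is the uniqueness step in (ii). The DPP alone gives only the inequality $v(x) \leq v(\gamma(\tau_0)) + \int_{\tau_0}^t L - \mane(t-\tau_0)$, which does not by itself upgrade $\gamma|_{[\tau_0,t]}$ to a calibrated curve for $v$; the upgrade must come from the horizontal differentiability of $v$ along calibrated curves together with the feedback representation of the optimal control, so that extremals issuing from a point of $\I(v)$ are uniquely determined. This is exactly where assumption \textbf{(S)} is indispensable, since it rules out nontrivial singular extremals and supplies the Euclidean semiconcavity needed for the Lipschitz feedback that closes the uniqueness argument.
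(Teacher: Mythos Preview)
Your treatment of part~($i$) is sound and matches the Fathi--Siconolfi template that the paper itself invokes: the domination of $T_t v$, the equality $v=T_t v-\mane t$ on $\I(v)$, and both inclusions $\I(v)\subset\I(T_t v)$ and $\I(T_t v)\subset\I(v)$ all follow from the concatenation and competitor arguments you describe.

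Part~($ii$), however, has a genuine gap at the uniqueness step. Your claim that ``the resulting Cauchy problem admits a unique normal extremal'' is not valid in the sub-Riemannian setting. The characteristic system governing optimal trajectories is the Hamiltonian flow on $T^{*}\T^{d}$, whose Cauchy data is a pair $(x_{0},p_{0})$ with $p_{0}\in\R^{d}$. Horizontal differentiability of $v$ at $\gamma(\tau_{0})$ pins down only $F^{*}(\gamma(\tau_{0}))p_{0}\in\R^{m}$, and since $m<d$ there is a $(d-m)$-parameter family of covectors with the same horizontal projection, each of which launches a \emph{different} normal extremal. (In the Heisenberg group, for instance, geodesics issuing from the origin with identical initial horizontal velocity but distinct vertical momentum $p_{3}$ diverge immediately.) Assumption~\textbf{(S)} only excludes singular minimizers; it does not force normal extremals with a common initial control to coincide. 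Consequently, knowing that $u_{\gamma}(\tau_{0})=u_{\beta}(0)=D_{p}L^{*}(\gamma(\tau_{0}),D_{F}v(\gamma(\tau_{0})))$ does not allow you to conclude $\gamma|_{[\tau_{0},t]}\equiv\beta|_{[0,t-\tau_{0}]}$. Nor can you run the feedback ODE $\dot\gamma=F(\gamma)D_{p}L^{*}(\gamma,D_{F}v(\gamma))$ directly, because $D_{F}v$ is only known to exist on $\I(v)$, and whether $\gamma|_{[\tau_{0},t]}$ stays in $\I(v)$ is precisely what you are trying to prove.

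To rescue the argument you would need to match the \emph{full} dual arcs $p_{\gamma}(\tau_{0})=p_{\beta}(0)$, which amounts to establishing Euclidean (not merely horizontal) differentiability of $T_{\tau_{0}}v$ at $\gamma(\tau_{0})$. This does not follow from the tools you cite: \Cref{prop:differentiability} and \Cref{cor:feedback} deliver only horizontal differentiability, and the level set $\{p:\mane+H(\gamma(\tau_{0}),p)=0\}$ in which $D^{+}T_{\tau_{0}}v(\gamma(\tau_{0}))$ sits is unbounded when $H$ is non-coercive. The paper does not supply a proof either---it simply refers to \cite{bib:FAS}---so the sub-Riemannian adaptation of this uniqueness step is left implicit there as well; but as written, your proposal does not close it.
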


\begin{theorem}\label{Tstrict}
 Let $v$ be a subsolution of the critical equation which is strict at each point of $\T^{d} \backslash \I(v)$. Then $T_{t}v$ is also a subsolution of the critical equation, strict at each point of $\T^{d} \backslash \I(v)$. 
\end{theorem}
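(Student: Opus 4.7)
The plan is, for each $x \in \T^d \setminus \I(v)$, to produce an open neighborhood $W \ni x$ and a constant $c > \mane$ such that $T_t v|_W$ is a viscosity subsolution of $c + H(y, Dw(y)) = 0$. The main idea is to transfer strictness from $v$ to $T_t v$ along optimal trajectories; infinitesimally this will be captured via the Pontryagin dual arc together with conservation of the Hamiltonian.

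First I would fix an optimal pair $(\gamma, u) \in \Gamma_{0, t}^{\to x}$ for $T_t v(x)$. By \Cref{lem1}(ii), $\gamma([0, t]) \subset \T^d \setminus \I(v)$, so $v$ is strict at each point of this compact set. A standard covering argument then produces an open set $U \supset \gamma([0, t])$ and a single constant $c_0 > \mane$ for which $v|_U$ is a viscosity subsolution of $c_0 + H(y, Dv(y)) = 0$. Using the coercivity in \textbf{(L)} exactly as in the compactness step of \Cref{lem:barrierconvergence}, minimizing sequences for $T_t v(y)$ converge, as $y \to x$, to minimizers of $T_t v(x)$; hence one can find a neighborhood $W \ni x$ and, for every $y \in W$, an optimal pair $(\tilde\gamma_y, \tilde u_y)$ with $\tilde\gamma_y([0, t]) \subset U$.

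The concluding step is pointwise. By \Cref{lem:semiconcavity}, $T_t v$ is locally semiconcave and hence differentiable a.e.\ on $W$. At any differentiability point $y \in W$, let $p_y(\cdot)$ be the dual arc of $(\tilde\gamma_y, \tilde u_y)$ furnished by the Pontryagin Maximum Principle; then $p_y(t) = D(T_t v)(y)$, and \Cref{lem:propagation} together with \Cref{continuity} yield $p_y(0)$ as a limiting (horizontal) gradient of $v$ at $\tilde\gamma_y(0) \in U$. Since $v$ is differentiable a.e.\ on $U$ and satisfies $c_0 + H(\cdot, Dv(\cdot)) \leq 0$ at such points, continuity of $H$ in both variables yields $c_0 + H(\tilde\gamma_y(0), p_y(0)) \leq 0$. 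Because $H$ is conserved along the Hamiltonian extremal $(\tilde\gamma_y, p_y)$, this inequality propagates forward to $s = t$, so $c_0 + H(y, D(T_t v)(y)) \leq 0$ holds a.e.\ on $W$. The semiconcavity of $T_t v$ combined with the convexity of $H$ in the momentum variable then upgrades this a.e.\ inequality to a full viscosity subsolution inequality on $W$, which is exactly strictness of $T_t v$ at $x$.

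The main obstacle I anticipate is the second step: guaranteeing that, for all $y$ in a neighborhood of $x$, optimal trajectories for $T_t v(y)$ are uniformly contained in $U$. In the absence of Tonelli coercivity, this continuous dependence is delicate and must combine the growth condition in \textbf{(L)} with assumption \textbf{(S)} (which precludes non-zero singular minimizers and the attendant loss of uniqueness that would destroy stability); nevertheless the compactness machinery already developed in \Cref{lem:barrierconvergence} should deliver the required inclusion.
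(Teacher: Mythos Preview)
Your route is Hamiltonian (Pontryagin dual arc plus conservation of $H$), whereas the paper defers to the Lagrangian argument of Fathi--Siconolfi. In that argument one shows directly that $T_t v$ is dominated by $L-c_0$ on a neighborhood $W$ of $x$: given a short admissible curve $\xi:[a,b]\to W$ from $y_1$ to $y_2$ with $b-a<t$, take an optimal $\gamma_1$ for $T_t v(y_1)$ and test $T_t v(y_2)$ with the concatenation $s\mapsto \gamma_1(s+(b-a))$ on $[0,t-(b-a)]$ followed by $\xi$; this yields
\[
T_t v(y_2)-T_t v(y_1)\le \big[v(\gamma_1(b-a))-v(\gamma_1(0))-\textstyle\int_0^{b-a}L(\gamma_1,u_1)\big]+\int_a^b L(\xi,u_\xi),
\]
and the bracket is $\le -c_0(b-a)$ because $\gamma_1([0,b-a])\subset U$ and $v\prec L-c_0$ on $U$. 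No differentiability of $v$ enters.

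Your argument has a genuine gap at Step~5. You assert that ``$v$ is differentiable a.e.\ on $U$'' and then pass the inequality $c_0+H\le 0$ from a.e.\ gradients to the limiting gradient $p_y(0)$. But in this sub-Riemannian setting a critical subsolution is only $d_{\mathrm{SR}}$-Lipschitz (\Cref{lem:equilip}), hence merely H\"older in the Euclidean metric, and Rademacher's theorem does not apply; the set $D^{*}v$ need not be meaningful, and the chain $D^{-}v\subset\partial_C v=\mathrm{co}\,D^{*}v$ is unavailable. \Cref{lem:propagation} gives $p_y(0)\in D^{-}v(\tilde\gamma_y(0))$, but the viscosity \emph{subsolution} property controls only $D^{+}v$, so you cannot conclude $c_0+H(\tilde\gamma_y(0),p_y(0))\le 0$. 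The Lagrangian proof sidesteps this entirely by using domination, which is equivalent to the subsolution property and needs no pointwise gradients. A secondary point: in Step~4 the open set $U$ must be chosen to cover the images of \emph{all} minimizers for $T_t v(x)$ (a compact family), not just the single $\gamma$ fixed in Step~1, since the limit of $\tilde\gamma_{y_n}$ could be any one of them; this is an easy fix.
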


Let $\pi_{1}: \T^{d} \times \R^{m} \to \T^{d}$ be the projection onto the first factor, that is, $\pi_{1}(x,u)=x$.
\begin{definition}[{\bf Mather measures and Mather set}]
		We say that $\mu \in \C_{F}^{S}$ is a Mather measure for $L$ if  
		\begin{align*}
		\mane = \inf_{\nu \in \C_{F}^{S}} \int_{\R^{d} \times \R^{m}}{L(x,u)\ \nu(dx,du)}=\int_{\R^{d} \times \R^{m}}{L(x,u)\ \mu(dx,du)}	
		\end{align*}
		and $\C^{*}_{F}(L)$ denotes the set of all Mather measures. We define the Mather set as 
		\begin{align*}
		\widetilde{\M}=\overline{\bigcup_{\mu \in \C^{*}_{F}(L)} \supp(\mu)} \subset \T^{d} \times \R^{m}.	
		\end{align*}
		and we call $\M = \pi_{1}\big(\widetilde{\M} \big)$ the projected Mather set. 
		\end{definition}

We can now prove the main result of this section stating the inclusion of the projected Mather set into the projected Aubry set.

\begin{theorem}\label{inclusion}
The following holds:
\begin{equation*}
\M \subset \A.
\end{equation*}  
\end{theorem}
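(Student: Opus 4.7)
The strategy is a proof by contradiction, using a semiconcave strict critical subsolution as a test function in the strongly $F$-closed measure identity. Since $\A$ is closed by \Cref{prop:closedaubry} and $\M = \pi_{1}(\overline{\bigcup_{\mu} \supp(\mu)})$, continuity of $\pi_{1}$ reduces the problem to showing $\pi_{1}(\supp(\mu)) \subset \A$ for every Mather measure $\mu \in \C^{*}_{F}(L)$.

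Suppose, toward a contradiction, that some $(x_{0}, u_{0}) \in \supp(\mu)$ satisfies $x_{0} \in \T^{d} \setminus \A$. By \Cref{strict} there is a global critical subsolution $v_{0}$ which is strict at every point of $\T^{d} \setminus \A$. A short calibration argument shows $\I(v_{0}) \subset \A$: a curve calibrated for $v_{0}$ passing through a point of $\T^{d} \setminus \A$ would stay briefly in a neighborhood where $v_{0}$ satisfies strict domination by $L - c$ with $c > \mane$, contradicting the equality required for calibration. By \Cref{Tstrict} the function $v := T_{1} v_{0}$ is again a critical subsolution, strict at every point of $\T^{d} \setminus \I(v_{0})$, and in particular strict at $x_{0}$. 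Reasoning exactly as in the proof of \Cref{lem:semiconcavity}, using $v(x) = \min_{y}\{v_{0}(y) + A_{1}(y, x)\}$ together with the semiconcavity of $A_{1}(y, \cdot)$, shows that $v$ is locally semiconcave, so $v \in \SC(\T^{d})$ and is an admissible test function against strongly $F$-closed measures.

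The key pointwise estimate comes from combining semiconcavity with the subsolution property. For any $x \in \T^{d}$, $u \in \R^{m}$, and $p \in D^{+} v(x)$, semiconcavity gives
\begin{equation*}
\partial_{F} v(x, u) = \lim_{h \to 0^{+}} \frac{v(x + hF(x)u) - v(x)}{h} \leq \langle F^{\star}(x) p, u \rangle.
\end{equation*}
Since $v$ is a viscosity subsolution, $\mane + H(x, p) \leq 0$ for every $p \in D^{+}v(x)$; by \eqref{eq:starhj} and the Fenchel inequality this yields $\langle F^{\star}(x) p, u\rangle \leq L(x, u) + L^{\star}(x, F^{\star}(x) p) \leq L(x, u) - \mane$. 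Let $U$ be an open neighborhood of $x_{0}$ and $c > \mane$ witnessing strictness of $v$ on $U$; the same argument restricted to $U$ upgrades the estimate to $\partial_{F} v(x, u) \leq L(x, u) - c$ for $(x, u) \in U \times \R^{m}$.

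Finally, integrating against the strongly $F$-closed Mather measure $\mu$ (using $\int \partial_{F} v\, d\mu = 0$ and $\int L\, d\mu = \mane$) and splitting over $\pi_{1}^{-1}(U)$ and its complement gives
\begin{equation*}
0 \leq \int_{\pi_{1}^{-1}(U)} (L - c)\, d\mu + \int_{\pi_{1}^{-1}(\T^{d} \setminus U)} (L - \mane)\, d\mu = (\mane - c)\, \mu(\pi_{1}^{-1}(U)).
\end{equation*}
Since $c > \mane$ this forces $\mu(\pi_{1}^{-1}(U)) \leq 0$, contradicting the fact that $U \times \R^{m}$ is an open neighborhood of $(x_{0}, u_{0}) \in \supp(\mu)$. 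The main obstacle is the preparatory step of producing a semiconcave strict critical subsolution, which is precisely why the extension of the test-function class from $C^{1}$ to $\SC(\T^{d})$ in the definition of strongly $F$-closed measures is essential; once this is in place—via the combination of \Cref{strict}, \Cref{Tstrict} and the regularizing action of the Lax-Oleinik semigroup—the argument reduces to the clean inequality above.
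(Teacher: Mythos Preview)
Your proof is correct and follows essentially the same route as the paper's: use a semiconcave strict critical subsolution as a test function in the strongly $F$-closed identity, combine the Fenchel inequality with the (strict) subsolution property to get the pointwise bound $\partial_{F}\varphi(x,u)\le L(x,u)-c$ on a neighborhood of $x_{0}$, and integrate-split to force $\mu(\pi_{1}^{-1}(U))=0$. The only difference is that you spell out explicitly what the paper leaves to the preceding subsections, namely the construction of the semiconcave strict subsolution via $v=T_{1}v_{0}$ together with the observation $\I(v_{0})\subset\A$; the paper simply invokes ``let $\varphi$ be a semiconcave strict subsolution at $x_{0}$'' after having prepared \Cref{strict} and \Cref{Tstrict}.
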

\proof 

Let $x_0 \in \T^d \backslash \A$, let $V$ be an open neighborhood of $x$ with $V \subset \T^d \backslash \A$ and let $\varphi$ be a semiconcave strict subsolution at $x_0$ to the critical equation with value $c_0 > \mane$. Let $x \in V$ and let $p \in D^{+}_{F} \varphi(x)$. Then, by the Legendre transform we have that 
\begin{align*}
\langle p, u\rangle_{\R^m} \leq L(x,u) + L^*(x, p) < L(x,u) - c_0, \quad \forall\ (x, u) \in V \times \R^m.
\end{align*}
So, taking the minimum over $p \in D^{+}_{F} \varphi(x)$ we get
\begin{equation*}
\partial_{F} \varphi (x, u) \leq  L(x,u) - c_0, \quad \forall\ (x, u) \in V \times \R^m. 
\end{equation*}
Fix a Mather measure $\mu \in \C^{*}_{F}(L)$. Then, we have that 
\begin{align*}
0 = \ & \int_{\T^d \times \R^m} \partial_{F} \varphi (x, u)\ \mu(dx,du)
\\
=\ & \int_{V \times \R^m} \partial_{F} \varphi (x, u)\ \mu(dx,du) + \int_{(\T^d \backslash V) \times \R^m} \partial_{F} \varphi (x, u)\ \mu(dx,du)
\\
\leq\ & \int_{V \times \R^m} (L(x,u) - c_0)\ \mu(dx,du) + \int_{(\T^d \backslash V) \times \R^m} (L(x,u) - \mane)\ \mu(dx,du)
\\
=\ & \int_{V \times \R^m} (\mane - c_0)\ \mu(dx,du) + \int_{\T^d \times \R^m} (L(x,u) - \mane)\ \mu(dx,du). 
\end{align*}
Hence, recalling that $c_0 > \mane$ and $\int_{\T^d \times \R^m} (L(x,u) - \mane)\ \mu(dx,du) = 0$ we conclude that 
\begin{equation*}
\int_{\T^d \times \R^m} \partial_{F} \varphi (x, u)\ \mu(dx,du) < 0
\end{equation*}
which implies the conclusion. \qed

As a consequence of the above result, we obtain the horizontal differentiability of solutions to the critical equation in $\M$ and the representation of $\M$ as the graph of a continuous vector fields (in the spirit of Mather graph Theorem).

\begin{corollary}\label{matherlemma}
Let $\mu \in \C^{*}_{F}(L)$ be a Mather measure and let $\chi$ be a fixed-point of $T_t$. Then the following holds:
\begin{itemize}
\item[($i$)] $\chi$ is differentiable at any $x \in \supp(\pi_{1} \sharp \mu)$ along the range of $F(x)$ and the horizontal gradient $D_{F}\chi(x)$ satisfies
\begin{equation*}
 D_{F}\chi(x)= D_{u}L(x,u), \quad \forall\ (x,u) \in \supp(\mu);
\end{equation*}	
\item[($ii$)] $\chi$ is a solution to 
\begin{equation*}
\mane + L^{*}(x, D_{F}\chi(x))=0, \quad \forall\ x  \in \supp(\pi_{1} \sharp \mu)
\end{equation*} 
in the classical sense;
\item[($iii$)] the Mather set is the graph of a continuous vector field, that is, 
\begin{equation*}
\widetilde\M=\{(x,D_{p}L^{*}(x,D_{F}\chi(x)): x \in \M\}.
\end{equation*}
\end{itemize}
\end{corollary}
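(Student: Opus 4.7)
The overall strategy rests on two pillars from earlier sections: the inclusion $\M\subset\A$ established in Theorem \ref{inclusion}, and the horizontal differentiability of any fixed-point of $T_t$ on $\A$ proved in Theorem \ref{thm:aubryset}. Together these immediately give that $\chi$ is horizontally differentiable at every $x\in\supp(\pi_{1}\sharp\mu)\subset\M$. Statement ($ii$) follows readily: for each such $x$ pick a curve calibrated for $\chi$ through $x$ (Theorem \ref{thm:aubryset}), apply Proposition \ref{prop:superdiff} to obtain $\mane+L^{*}(x,p)=0$ for every $p\in D^{+}_{F}\chi(x)$, and use Lemma \ref{lem:directional} to collapse $D^{+}_{F}\chi(x)$ to the singleton $\{D_{F}\chi(x)\}$.

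For ($i$), I would exploit the semiconcavity of $\chi$ (Lemma \ref{lem:semiconcavity}), which makes it an admissible test function for strongly $F$-closed measures, yielding
\[
\int_{\T^{d}\times\R^{m}}\partial_{F}\chi(x,u)\,\mu(dx,du)=0.
\]
A pointwise Fenchel inequality combined with the subsolution property of $\chi$ gives $\partial_{F}\chi(x,u)\leq L(x,u)-\mane$ everywhere: for any $p\in D^{+}\chi(x)$ one has $\langle p,F(x)u\rangle\leq L(x,u)+L^{*}(x,F^{\star}(x)p)\leq L(x,u)-\mane$, and taking the infimum over $p$ yields the bound. Integrating against $\mu$ and invoking Theorem \ref{thm:main0} forces $\partial_{F}\chi(x,u)=L(x,u)-\mane$ for $\mu$-a.e.\ $(x,u)$. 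On $\supp(\pi_{1}\sharp\mu)$, horizontal differentiability converts $\partial_{F}\chi(x,u)$ into $\langle D_{F}\chi(x),u\rangle$, so the equality becomes $\langle D_{F}\chi(x),u\rangle=L(x,u)+L^{*}(x,D_{F}\chi(x))$, which is exactly the equality case of the Fenchel inequality; since $D^{2}_{u}L\geq0$, this is equivalent to $D_{F}\chi(x)=D_{u}L(x,u)$. Continuity of $D_{u}L$ and of $D_{F}\chi$ on $\M$ (the latter from upper semicontinuity of $D^{+}\chi$ together with its singletonness on $\A$) then propagates the identity from the $\mu$-a.e.\ set to all of $\supp(\mu)$, proving ($i$).

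For ($iii$), Legendre inversion applied to ($i$) gives $u=D_{p}L^{*}(x,D_{F}\chi(x))$ on $\supp(\mu)$ for every $\mu\in\C^{*}_{F}(L)$. Both inclusions of the graph identity follow by taking unions over such $\mu$ and passing to the closure defining $\widetilde\M$, using the continuity of $x\mapsto D_{p}L^{*}(x,D_{F}\chi(x))$ on $\M$.

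The main technical obstacle I foresee is establishing the continuity of $D_{F}\chi$ on $\M$ needed to lift the Legendre equality from a $\mu$-a.e.\ statement to the closed set $\supp(\mu)$ and ultimately to $\widetilde\M$. This continuity follows from upper semicontinuity of $D^{+}\chi$ (semiconcavity) together with its reduction to a singleton at every point of $\A$ by Theorem \ref{thm:aubryset}, but it is not explicit in the preceding development and requires a short verification.
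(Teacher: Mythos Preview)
Your proposal is correct and follows essentially the same route as the paper. Both rely on $\M\subset\A$ (Theorem~\ref{inclusion}) together with the horizontal differentiability of $\chi$ on $\A$ (Theorem~\ref{thm:aubryset}), and both obtain the key Fenchel equality on $\supp(\mu)$ by integrating the chain
\[
\langle D_{F}\chi(x),u\rangle\ \leq\ L(x,u)+L^{*}(x,D_{F}\chi(x))\ \leq\ L(x,u)-\mane
\]
against the Mather measure $\mu$ and using that both outer terms integrate to zero. The only differences are organizational: the paper derives ($ii$) as a byproduct of this same integration (the second inequality becomes an equality $\mu$-a.e.), whereas you establish ($ii$) separately via Proposition~\ref{prop:superdiff} along a calibrated curve through $x$ and then feed it into the argument for ($i$); and you are more explicit than the paper about the continuity of $x\mapsto D_{F}\chi(x)$ on $\A$ needed to pass from a $\mu$-a.e.\ identity to one holding on all of $\supp(\mu)$, a step the paper's proof invokes without comment.
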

\proof
	Let $\chi$ be a fixed-point of the Lax-Oleinik semigroup. Since $\M \subset \A$ and $\chi$ is horizontally differentiable on $\A$, we deduce that the same holds on $\M$. Moreover, we have that
\begin{equation}\label{eq:inequality}
\langle D_{F}\chi(x), u \rangle \leq L(x,u) + L^{*}(x, D_{F}\chi(x)) \leq L(x,u) - \mane, \,\, (x,u) \in \M \times \R^{m}.	
\end{equation}
Hence, integrating the above inequality w.r.t. $\mu$ we get
 \begin{equation*}
 0 \leq \int_{\R^{d} \times \R^{m}}{\big(L(x,u) + L^{*}(x, D_{F}\chi(x)) \big)\ \mu(dx,du)} \leq 0.
 \end{equation*}
 So,
 \begin{equation}\label{eq:0}
\int_{\R^{d} \times \R^{m}}{\big(L(x,u) + L^{*}(x, D_{F}\chi(x)) \big)\ \mu(dx,du)} = 0.
 \end{equation}
From \eqref{eq:inequality} we know that 
\begin{equation}\label{eq:00}
0 \leq L(x,u) + L^{*}(x, D_{F}\chi(x)) - \langle D_{F}\chi(x), u \rangle
\end{equation}
Therefore, combining \eqref{eq:0} and \eqref{eq:00} we deduce that 
\begin{equation*}
0 \leq \int_{\R^{d} \times \R^{m}}{\big(L(x,u) + L^{*}(x, D_{F}\chi(x)) - \langle D_{F}\chi(x), u \rangle \big)\ \mu(dx, du)} = 0
\end{equation*}
and so
\begin{equation}\label{eq:longequality}
	\langle D_{F}\chi(x), u \rangle = L(x,u) + L^{*}(x, D_{F}\chi(x)), \quad (x,u) \in \supp(\mu). 
\end{equation}
Similarly, again from \eqref{eq:inequality} and \eqref{eq:0} we obtain 
\begin{equation*}
L(x,u) + L^{*}(x, D_{F}\chi(x)) = L(x,u) - \mane, \quad (x,u) \in \supp(\mu). 
\end{equation*}
Thus, 
\begin{equation*}
\mane + L^{*}(x, D_{F}\chi(x)) = 0, \quad \forall\ x \in \supp(\pi_{1} \sharp \mu)	
\end{equation*}
and, by \eqref{eq:longequality} and the properties of the Legendre transform (\cite[Appendix A.2]{bib:SC}), we get 
\begin{equation*}
	  D_{F}\chi(x)=D_{u}L(x,u), \quad \forall \,\, (x,u) \in \supp(\mu)
\end{equation*}
which proves ($i$) and ($ii$). Moreover, still from \eqref{eq:longequality} and the properties of the Legendre transform we deduce that
\begin{equation*}
u=D_{p}L^{*}(x, D_{F}\chi(x)), \quad \forall\ (x,u) \in \supp(\mu).	
\end{equation*}
So,
\begin{equation*}
\widetilde\M=\{(x, D_{p}L^{*}(x, D_{F}\chi(x)): x \in \M\} 	
\end{equation*}
which proves ($iii$) and completes the proof. \qed

We conclude this section with an example. 

\begin{example}\em\label{Grushin1}
Let us consider the Grushin dynamic on $\T^2$ determined by
\begin{equation*}
\mathcal{G}(x)=
\begin{bmatrix}
1 & 0 \\ 0 & x_1
\end{bmatrix}
\end{equation*}
with controls $u = (u_1, u_2) \in \R^2$ and a Lagrangian of the form
\begin{equation*}
L(x, u) = \frac{1}{2} |u - V(x)|^2 + G(x)
\end{equation*}
for a given $V \in C^2(\T^2; \R^2)$ and $G \in C^2(\T^2; \R)$. The state equation associated with  $\mathcal{G}$ is
\begin{equation*}
\begin{cases}
\dot x_1(t) = u_1(t)
\\
\dot x_2(t) = x_1(t) u_2(t). 
\end{cases}
\end{equation*}
We recall that a probability measure $\mu$ on $\T^2 \times \R^2$ is $\mathcal{G}$-closed if 
\begin{equation*}
	\int_{\T^{2} \times \R^{2}}{\langle D\varphi(x), \mathcal{G}(x)u \rangle\ \mu(dx,du)}=0, \quad \forall\ \varphi \in C^{1}(\T^{2}). 	
	\end{equation*}
	A simple verification shows that the following measures are $\mathcal{G}$-closed: 
\begin{equation*}
\mu_1=\nu \otimes \delta_{(0, 0)}, \quad \textrm{where $\nu$ is any probability measure on $\T^{2}$}
\end{equation*}
and
\begin{equation*}
\mu_2=\nu^1_{(0, \T)} \otimes \nu^2_{(0, \R)},
\end{equation*}
where $\nu^1_{(0,\T)} = \delta_{\{0\}} \otimes \nu_1$, $\nu^2_{(0, \R)} = \delta_{\{0\}} \otimes \nu_2$ for any $\nu_1$ and $\nu_2$, probability measures on $\T$ and $\R$ respectively. 
So, recalling also that 
\begin{equation*}
\mane = \inf_{\mu \in \C_{F}} \int_{\T^2 \times \R^2} \left(\frac{1}{2}|u - V(x)|^2 + G(x) \right)\;\mu(dxdu)
\end{equation*}
we obtain
\begin{multline*}
\mane \leq \inf \left\{\int_{\T^2 \times \R^2} \left(\frac{1}{2}|u - V(x)|^2 + G(x) \right)\;\mu(dxdu) : \mu = \nu \otimes \delta_{(0,0)}, \; \nu \in \PP(\T^2)  \right\}
\\
= \min_{y \in \T^2} \left(\frac{1}{2}|V(y)|^2 + G(y) \right)
\end{multline*}
and
\begin{align*}
\mane & \leq \inf \Big\{\int_{\T^2 \times \R^2} \left(\frac{1}{2}|u - V(x)|^2 + G(x) \right)\;\mu(dxdu) : \\ &\qquad\qquad\qquad\qquad \mu = (\delta_{\{0\}} \otimes \nu_1) \otimes (\delta_{\{0\}} \otimes \nu_2),\; \nu_1 \in \PP(\T), \; \nu_2 \in \PP(\R)  \Big\}
\\
& =\ \min_{z \in \T} \left(\frac{1}{2}|V_1(0, z)|^2 + G(0, z)\right)
\end{align*}
where we set $V(x)=(V_1(x), V_2(x))$ for $x \in \T^2$. Therefore, 
\begin{equation}\label{Gmane}
\mane \leq \min\left\{\min_{y \in \T^2} \left(\frac{1}{2}|V(y)|^2 + G(y) \right), \min_{z \in \T} \left(\frac{1}{2}|V_1(0, z)|^2 + G(0, z)\right)\right\}. 
\end{equation}

In some special cases, it is easy to show that \eqref{Gmane} becomes an equality. For instance, if there exists $(x^*, y^*) \in \T^2$ such that $V(x^*, y^*) = 0$ and $G(x^*, y^*) = \displaystyle{\min_{(x, y) \in \T^2}} G(x, y)$, then  
\begin{equation*}
\mane = \min_{(x, y) \in \T^2} G(x, y)
\end{equation*}
and $\mu_1 = \delta_{(x^*, y^*)} \otimes \delta_{(0, 0)}$ is the Mather measure. 
\end{example}

\medskip
\small{
\noindent{\bf Declarations.}

{\bf Ethical approval.} Not applicable.

{\bf Competing interests.} The authors report there are no competing interests to declare.

{\bf Authors contributions.} All authors contributed to the research, editing, and review of the manuscript.

{\bf Funding.} The authors were partially supported by Istituto Nazionale di Alta Matematica, INdAM-GNAMPA project 2022 and INdAM-GNAMPA project 2023, by the MIUR Excellence Department Project MatMod@TOV awarded to the Department of Mathematics, University of Rome Tor Vergata, CUP E83C23000330006, and by the King Abdullah University of Science and Technology (KAUST) project CRG2021-4674 "Mean-Field Games: models, theory and computational aspects". P. Cannarsa was partially supported by PRIN 2022 PNRR "Some mathematical approaches to climate change and its impacts", CUP E53D23017910001.

{\bf Availability of data and materials.} No data associated in the manuscript. 
}

%\bibliographystyle{abbrv}
%\bibliography{references}

\end{document}